\theoremstyle{plain}
\newtheorem{theorem}{Theorem}[section]
\newtheorem{lemma}[theorem]{Lemma}
\newtheorem{proposition}[theorem]{Proposition}
\newtheorem{corollary}[theorem]{Corollary}
\theoremstyle{definition}
\newtheorem{definition}[theorem]{Definition}
\newtheorem{example}[theorem]{Example}
\theoremstyle{remark}
\newtheorem*{notation}{Notation}
\newtheorem*{remark}{Remark}
\numberwithin{equation}{section}
\newcommand{\N}{\mathbb{N}}
\newcommand{\C}{\mathbb{C}}
\newcommand{\Prime}{\mathbb{P}}
\title{The category of Bratteli diagrams}
\author{Massoud Amini,  George A. Elliott, and Nasser Golestani}
\address[\textbf{Massoud Amini}]{Department of Pure Mathematics\\ Faculty of Mathematical Sciences\\
 University of Tarbiat Modares\\
 Tehran\\ Iran}
\email {mamini@modares.ac.ir }
\address[\textbf{George A. Elliott}]{Department of Mathematics\\ University of Toronto\\ Toronto, Ontario, Canada\ \ M5S 2E4}
\email {elliott@math.toronto.edu}
\address[\textbf{Nasser Golestani}]{Department of Pure Mathematics\\ Faculty of Mathematical Sciences\\
 University of Tarbiat Modares\\
 Tehran\\ Iran}
\email{n.golestani@modares.ac.ir}
\subjclass[2010]{46L05, 46L35, 46M15} \keywords{C$^{*}$-algebra,
category, functor, AF~algebra, dimension group, Bratteli diagram}
\begin{document}
\begin{abstract}
A category structure for Bratteli diagrams is proposed and a functor from
the category of AF algebras to the category of Bratteli diagrams is
constructed. Since  isomorphism of Bratteli diagrams in this category coincides
with Bratteli's notion of equivalence, we obtain in particular a functorial formulation of Bratteli's
classification of AF algebras (and at the same time, of Glimm's classification of UHF~algebras).
It is shown that the three approaches
to classification of AF~algebras, namely, through Bratteli diagrams, K-theory, and
abstract classifying categories, are essentially the same
from a categorical point of view.
\end{abstract}
\maketitle
\section{Introduction}
\pagestyle{plain}
\noindent AF algebras were first introduced and studied by Bratteli in 1972 \cite{br72}. An
AF algebra  is a C$^{*}$-algebra which is the closure of the union  of an
increasing sequence of its finite dimensional C$^{*}$-subalgebras.
The class of AF algebras has an interesting variety of examples \cite{br72, da96}.
AF algebras are generalizations of UHF~algebras which were
studied by Glimm in 1960 \cite{gl60} and of matroid C$^{*}$-algebras (stably isomorphic to UHF~algebras)
introduced by Dixmier in 1967 \cite{di67}.
Glimm gave a classification of UHF~algebras. In a brilliant leap, Bratteli generalized Glimm's classification to arbitrary AF~algebras (see below---Theorem~\ref{thrst} is a reformulation of this).

In 1976, Elliott gave a  classification of
 AF algebras using K-theory \cite{el76}.
 In fact, Elliott showed that the functor
 $\mathrm{K}_{0}: \mathbf{AF}\to \mathbf{DG}$, from the category of
 AF~algebras with  $*$-homomorphisms
 to the category of (scaled countable) dimension groups with order-preserving homomorphisms,
is a strong classification functor, in the sense that
if $\mathcal{A}_{1},\mathcal{A}_{2}\in \mathbf{AF}$ and
$\mathrm{K}_{0}(\mathcal{A}_{1})\cong \mathrm{K}_{0}(\mathcal{A}_{2})$, then we have
$\mathcal{A}_{1}\cong \mathcal{A}_{2}$, and in fact every isomorphism from
$\mathrm{K}_{0}(\mathcal{A}_{1})$ onto $\mathrm{K}_{0}(\mathcal{A}_{2})$ comes from an isomorphism from $\mathcal{A}_{1}$ onto $\mathcal{A}_{2}$
 (see \cite{el76}, \cite[Section~7.2]{mu90}, and \cite[Sections 5.1--5.3]{el10} for details).
 This categorical idea,  finding a (strong) classification functor from a given category to another, more accessible category,
 is useful in the classification of various categories (see \cite{el10}).

The classification of AF algebras obtained by Bratteli in \cite{br72} used
what are now called Bratteli diagrams.
Bratteli  associated to each AF algebra $\mathcal{A}$ an infinite
directed graph $\mathcal{B}(\mathcal{A})$,  its Bratteli diagram (see Definition~\ref{defbd}),
and used these very effectively to study AF algebras.
Some attributes of an AF algebra can be read directly from its Bratteli diagram,
for instance its ideal structure.
Bratteli showed that  for
$\mathcal{A}_{1},\mathcal{A}_{2}\in \mathbf{AF}$,
$\mathcal{A}_{1}\cong \mathcal{A}_{2}$ if
$\mathcal{A}_{1}$ and $\mathcal{A}_{2}$ have the same Bratteli diagram,
i.e.,
$\mathcal{B}(\mathcal{A}_{1})=\mathcal{B}(\mathcal{A}_{2})$
(see Theorem~\ref{olab}). In fact Bratteli determined, in terms of the Bratteli diagrams of $\mathcal{A}_{1}$ and $\mathcal{A}_{2}$, exactly when $\mathcal{A}_{1}$ and $\mathcal{A}_{2}$ are isomorphic.

Denote by $\mathbf{BD}$ the set of all Bratteli diagrams. Then, Bratteli's theorem  asserts that the map
$\mathcal{B}: \mathbf{AF}\to \mathbf{BD}$ has the property that if
$\mathcal{A}_{1},\mathcal{A}_{2}\in \mathbf{AF}$ and
$\mathcal{B}(\mathcal{A}_{1})=\mathcal{B}(\mathcal{A}_{2})$, or even just $\mathcal{B}(\mathcal{A}_{1})$ is equivalent in Bratteli's sense to $\mathcal{B}(\mathcal{A}_{2})$,  then
$\mathcal{A}_{1}\cong \mathcal{A}_{2}$.  The question that arises naturally here is whether the
map $\mathcal{B}: \mathbf{AF}\to \mathbf{BD}$ can be made into a functor,
and if so, whether it is a  classification functor. This
paper answers these questions.

In Section~2 we define an appropriate notion of  morphism in $\mathbf{BD}$ and we show that
$\mathbf{BD}$ with these morphisms is a category (Theorem~\ref{thrbd}).
In Section~3 we show that $\mathcal{B}: \mathbf{AF}\to \mathbf{BD}$
is a (strong) classification functor (Theorem~\ref{thrst}); thus for
$\mathcal{A}_{1},\mathcal{A}_{2}\in \mathbf{AF}$
we have $\mathcal{A}_{1}\cong \mathcal{A}_{2}$ if, and only if,
$\mathcal{B}(\mathcal{A}_{1})\cong\mathcal{B}(\mathcal{A}_{2})$.
This is a functorial formulation of Bratteli's theorem and would appear to be a definitive  elaboration of  the
classification of AF~algebras from the Bratteli diagram point of view.
In particular,  just the fact that the map is a functor yields
Glimm's classification theorem for UHF~algebras (see the proof of Theorem~\ref{thrgl}).

In Section~4, it is shown that the functor $\mathcal{B}: \mathbf{AF}\to \mathbf{BD}$
is a full functor (Theorem~\ref{thrful}),
which means that homomorphisms in the codomain category can be lifted back to homomorphisms in the domain category (this was done for isomorphisms in Theorem~\ref{thrst}).

In Section~5, we investigate the relation between the category  $\mathbf{BD}$ of Bratteli diagrams and two  abstract classifying
categories, $\mathbf{AF}^{\mathrm{out}}$ and $\overline{\mathbf{AF}^{\mathrm{out}}}$, for AF~algebras (cf.~\cite{el10}).
We show that there is a strong classification  functor from
$\overline{\mathbf{AF}^{\mathrm{out}}}$ to $\mathbf{BD}$
which is faithful and full (Theorem~\ref{throutbar}) and is an equivalence of categories
(Theorem~\ref{thrfung}).

In Section~6, we investigate the relation between
$\overline{\mathbf{AF}^{\mathrm{out}}}$
and the category $\mathbf{DG}$ of dimension groups.
We show that there is a strong classification  functor from
$\overline{\mathbf{AF}^{\mathrm{out}}}$ to $\mathbf{DG}$
which is faithful and full (Theorem~\ref{thrkbar}) and is an equivalence of categories
(Theorem~\ref{thrfung0}). It is shown that the three strong classification functors $\mathcal{B}:\mathbf{AF}\to \mathbf{BD}$,
$\overline{\mathcal{F}} :\mathbf{AF}\to  \overline{\mathbf{AF}^{\mathrm{out}}}$, and
$\mathrm{K}_{0}: \mathbf{AF}\to \mathbf{DG}$ which  classify
 AF~algebras are essentially the same (Theorem~\ref{thrfuns}).
\section{The Category of Bratteli Diagrams $\mathbf{BD}$}
\noindent The notion of a Bratteli diagram was  introduced by
Bratteli to study AF algebras \cite{br72}. There are various formal definitions
(just with different formulations)
for a Bratteli diagram; for example see  \cite{du99} and \cite{lt80}.
What is behind these definitions is the very special structure of a $*$-homomorphism
between finite dimensional C$^{*}$-algebras. The following theorem of
Bratteli describes this structure \cite{br72}. Let us just quote this theorem,  with some slight changes, from \cite{da96}.
\begin{theorem}\label{thrfd}
With $A_{1}$ and $A_{2}$ denoting the finite dimensional C$^{*}$-algebras $\mathcal{M}_{n_{1}}\oplus\cdots \oplus\mathcal{M}_{n_{k}}$ and
 $\mathcal{M}_{m_{1}}\oplus\cdots \oplus\mathcal{M}_{m_{l}}$ (where the n's and m's are non-zero), let
 $\varphi :A_{1}\to A_{2}$ be a $\ast$-homomorphism. Then there is a unique $l\times k$
 matrix $E=(a_{ij})$ of positive (i.e., non-negative) integers with the property that there is a
 unitary $v= (v_{1},\ldots, v_{l})$ in $A_{2}$ such that
 if we set $\varphi_{i}=\pi_{i}\circ \varphi :A_{1}\to \mathcal{M}_{m_{i}}$ then

 \begin{equation*}
 v_{i}\varphi_{i}(u_{1},\ldots, u_{k})v_{i}^{*}=
 \left(
 \begin{smallmatrix}
  \resizebox{.9cm}{.4cm}{$u_{1}^{(a_{i1})}$} &  & & \resizebox{.2cm}{.22cm}{$0$}\ \ \ \ \\
  & \ddots &  & \\
  &  & \resizebox{.9cm}{.4cm}{$u_{k}^{(a_{ik})}$} &  \\
 \resizebox{.2cm}{.22cm}{$0$} & & & \resizebox{.67cm}{.31cm}{$0^{(s_{i})}$}
 \end{smallmatrix}
 \right), \
  \quad\  (u_{1},\ldots, u_{k})\in A_{1},
 \end{equation*}

 \noindent where $s_{i}$ is defined by the equation
 $\sum_{j=1}^{k}a_{ij}n_{j}+ s_{i}= m_{i}$, for each $1\leq
i\leq l$. Thus, if $V_{1}$ and $V_{2}$ denote the column matrices such that
$V_{1}^{T}=(n_{1} \cdots \ n_{k})$ and
 $V_{2}^{T}=(m_{1} \cdots \ m_{l})$, then $EV_{1}\leq V_{2}$.
 Moreover, we have
 \begin{itemize}
 \item[(1)] $\varphi$ is injective if and only if for each $j$ there is an $i$ such that
 $a_{ij}\neq 0$;
 \item[(2)] $\varphi$ is unital if and only if $EV_{1}= V_{2}$, i.e.,  $s_{i}=0$ for
 each $1\leq i\leq l$.
 \end{itemize}
\end{theorem}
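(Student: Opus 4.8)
The plan is to reduce immediately to the case of a single matrix summand in the codomain and then invoke the representation theory of finite dimensional C$^{*}$-algebras. Writing $\varphi=(\varphi_{1},\ldots,\varphi_{l})$ with $\varphi_{i}=\pi_{i}\circ\varphi:A_{1}\to\mathcal{M}_{m_{i}}$, it suffices to analyse each $\varphi_{i}$ separately, since the unitary $v$ we seek is assembled coordinatewise from unitaries $v_{i}\in\mathcal{M}_{m_{i}}$. Thus I would fix $i$ and regard $\psi:=\varphi_{i}$ as a $*$-representation of $A_{1}$ on the Hilbert space $\C^{m_{i}}$.

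The key step, and the one I expect to carry the real technical weight, is the structure of such a representation. Let $e_{j}\in A_{1}$ denote the unit of the summand $\mathcal{M}_{n_{j}}$; these are mutually orthogonal central projections, and $p_{j}:=\psi(e_{j})$ are mutually orthogonal projections in $\mathcal{M}_{m_{i}}$ with $\sum_{j}p_{j}=\psi(1)\le 1$. The restriction of $\psi$ to $\mathcal{M}_{n_{j}}$ is a $*$-representation of the simple algebra $\mathcal{M}_{n_{j}}$ on $p_{j}\C^{m_{i}}$, and the heart of the matter is the classical fact that every $*$-representation of $\mathcal{M}_{n}$ on a finite dimensional Hilbert space is unitarily equivalent to a multiple of the identity representation. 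Concretely, using the matrix units $E_{pq}$ of $\mathcal{M}_{n_{j}}$, the projections $\psi(E_{pp})$ are pairwise Murray--von Neumann equivalent via the partial isometries $\psi(E_{pq})$, so $p_{j}\C^{m_{i}}$ splits into $a_{ij}:=\operatorname{rank}(\psi(E_{11}))$ mutually orthogonal, unitarily equivalent copies of $\C^{n_{j}}$, on each of which $\psi$ acts as the identity representation. Choosing an orthonormal basis of the range of $\psi(E_{11})$ and transporting it by these partial isometries produces a unitary putting $\psi|_{\mathcal{M}_{n_{j}}}$ into the required block form with $u_{j}$ repeated $a_{ij}$ times.

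Assembling these over $j$ and adjoining an orthonormal basis of the orthogonal complement of $\psi(1)\C^{m_{i}}$---on which $\psi$ vanishes, giving the zero block of size $s_{i}:=m_{i}-\sum_{j}a_{ij}n_{j}$---yields the unitary $v_{i}$ and the displayed normal form; taking ranks gives $\sum_{j}a_{ij}n_{j}+s_{i}=m_{i}$ and hence $s_{i}\ge 0$, that is, $EV_{1}\le V_{2}$. Uniqueness of $E$ is then immediate, since $a_{ij}$ is forced to equal the unitary invariant $\operatorname{rank}(\varphi_{i}(q_{j}))$, where $q_{j}$ is any minimal projection of $\mathcal{M}_{n_{j}}$ (equivalently, $a_{ij}n_{j}=\operatorname{rank}(\varphi_{i}(e_{j}))$).

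Finally, for the two itemized properties I would argue through the kernel. The kernel of $\varphi$ is a closed two-sided ideal of $A_{1}$, hence of the form $\bigoplus_{j\in S}\mathcal{M}_{n_{j}}$ for some $S\subseteq\{1,\ldots,k\}$, and $j\in S$ exactly when $\varphi(e_{j})=0$, i.e.\ when $a_{ij}=0$ for every $i$; thus $\varphi$ is injective if and only if $S=\emptyset$, which is precisely condition~(1). For~(2), $\varphi$ is unital if and only if each $\varphi_{i}(1)=1_{\mathcal{M}_{m_{i}}}$, which by the rank computation above holds exactly when $s_{i}=0$ for all $i$, i.e.\ when $EV_{1}=V_{2}$.
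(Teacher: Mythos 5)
Your proof is correct in all its steps (the reduction to one codomain summand, the multiplicity decomposition of a unital representation of $\mathcal{M}_{n_{j}}$ via matrix units, the rank computation forcing uniqueness of $E$, and the kernel-ideal and rank arguments for (1) and (2)). The paper itself offers no proof beyond citing Bratteli's Proposition~1.7 and Davidson's Corollary~III.2.2, and your argument is essentially the standard one given in those sources, so there is nothing to add.
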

\begin{proof}
See \cite[Proposition 1.7]{br72} and \cite[Corollary III.2.2]{da96}.
\end{proof}
Let us call the matrix $E$ in the previous theorem the  multiplicity matrix of $\varphi$, and denote it by
$R_{\varphi}$ (this is the notation used in \cite{bl98}). In general, let $V_{i}$  be a $k_{i}\times 1$ matrix of
non-zero positive integers for $i=1,2$;
by a  \emph{multiplicity matrix} $E=(a_{ij})$ from $V_{1}$ to $V_{2}$ we shall mean  a $k_{2}\times k_{1}$ matrix of positive integers such that $EV_{1}\leq V_{2}$.
We shall use the notation $E: V_{1}\to  V_{2}$  to mean that $E$ is a
multiplicity matrix from $V_{1}$ to $V_{2}$.
$E$ will be called an \emph{embedding matrix} if for each $j$ there is an $i$ such that
 $a_{ij}\neq 0$ (in other words, if the algebra map induced by $E$---as defined above---, is injective).

Let us recall the   formulation of the  definition of a Bratteli diagram in \cite[Sections 2 and 3]{el10},
which uses the matrix language and is more flexible for our purposes.
\begin{definition}\label{defbd}
By a \emph{Bratteli diagram} let us mean an ordered pair $B=(V,E)$,
$V=(V_{n})_{n=1}^{\infty}$ and $E=(E_{n})_{n=1}^{\infty}$, such that:
\begin{itemize}
\item[(1)]
each $V_{n}$ is a $k_{n}\times 1$ matrix of non-zero positive integers for
some
\end{itemize}
\noindent
$k_{n}\geq 1$;
\begin{itemize}
\item[(2)]
each $E_{n}$ is an embedding matrix  from $V_{n}$ to $V_{n+1}$.
\end{itemize}
 Let us denote such a $B$ by the  diagram

\[
\xymatrix{V_{1}\ar[r]^-{\resizebox{1.15em}{.65em}{$E_{1}$}}
 &V_{2}\ar[r]^-{\resizebox{1.15em}{.65em}{$E_{2}$}}
 &V_{3}\ar[r]^-{\resizebox{1.15em}{.65em}{$E_{3}$}}
 &\cdots\ .
 }
\]
\end{definition}
Let us write $E_{nm}=E_{m-1}\cdots E_{n+1}E_{n}$ for $n< m$ and  $E_{nn}=I$, where $I$ is the identity matrix of order $k_{n}$. Note that $E_{nm}$ is a  multiplicity matrix from $V_{n}$ to $V_{m}$.
\begin{remark}
 In Definition~\ref{defbd}, we have in fact defined the notion of a ``non-zero" Bratteli diagram.
 This is enough for working with non-zero (in particular, unital) AF~algebras.
 For the zero AF~algebra, we get the  zero Bratteli diagram, which is nothing
but a single zero square matrix of size one,
denoted by $0$.
\end{remark}
Let $\mathbf{BD}$ denote the set of all Bratteli diagrams. We wish to define morphisms between objects in $\mathbf{BD}$ to make it a category. In order to formulate the correct notion of morphism for our purposes, we first need to define the notion of premorphism. Recall
that a sequence $(f_{n})_{n=1}^{\infty}$ of positive integers
 is said to be cofinal in $\N$ if $\sup\{f_{n}\, |\, n\in \N\}=+\infty$.
\begin{definition}\label{defpre}
Let $B=(V,E)$ and $C=(W,S)$ be  Bratteli diagrams. A
\emph{premorphism} $f:B \to C$ is an ordered pair
$((F_{n})_{n=1}^{\infty},(f_{n})_{n=1}^{\infty})$ where
$(F_{n})_{n=1}^{\infty}$ is a sequence of matrices  and $(f_{n})_{n=1}^{\infty}$
a cofinal sequence of positive integers
with $f_{1}\leq f_{2}\leq \cdots$ such that:
\begin{itemize}
\item[(1)] each $F_{n}$ is a  multiplicity  matrix from $V_{n}$ to $W_{f_{n}}$;
\item[(2)] the diagram of $f:B\to C$  commutes:

\[
\xymatrix{V_{1}\ar[r]^{E_{1}}\ar[d]_{F_{1}}
 &V_{2}\ar[r]^-{E_{2}}\ar[d]_{F_{2}} &V_{3}\ar[r]^-{E_{3}}\ar[ld]^{F_{3}} &\cdots \ \ \  \\
 W_{1}\ar[r]_{S_{1}}
 &W_{2}\ar[r]_{S_{2}} &W_{3}\ar[r]_{S_{3}}&\cdots \   .
 }
\]
\end{itemize}
Commutativity of the diagram of course amounts to saying that for any positive integer $n$  we have  $F_{n+1}E_{n}=S_{f_{n}f_{n+1}}F_{n}$; that is, the square

\[
\xymatrix{V_{n}\ar[r]^{E_{n}}\ar[d]_{F_{n}}
 & V_{n+1}\ar[d]^{F_{n+1}}  \\
 W_{f_{n}}\ar[r]_{S_{f_{n}f_{n+1}}}
 & W_{f_{n+1}}
 }
\]

\noindent commutes. (This implies the general property of commutativity, namely, that any two paths of maps between the same pair of points in the diagram agree, i.e., have the same product.)

Let $B$, $C$, and $D$ be objects in $\mathbf{BD}$ and let $f:B\to C$ and $g:C\to D$ be premorphisms,
$f=((F_{n})_{n=1}^{\infty},(f_{n})_{n=1}^{\infty})$ and
$g=((G_{n})_{n=1}^{\infty},(g_{n})_{n=1}^{\infty})$. The composition of $f$ and $g$ is defined as $gf=((H_{n})_{n=1}^{\infty},(h_{n})_{n=1}^{\infty})$, where
$H_{n}=G_{f_{n}}F_{n}$ and $h_{n}=g_{f_{n}}$.
\end{definition}
\begin{remark}
In Definition~\ref{defpre}, it is implicitly assumed that
the Bratteli diagrams in question are non-zero. Let us
define the zero premorphism as follows.
Let $B$ be a Bratteli diagram.
The zero premorphism from $B$ to $0$ (the zero Bratteli diagram) is
the ordered pair $(B,0)$. Similarly, the zero premorphism from $0$ to $B$
is $(0,B)$. (Note that a morphism in a category depends on both
the domain and the codomain objects.) The composition of
the zero premorphism with any other premorphism is defined to be zero.
\end{remark}
\begin{proposition}\label{propre}
The set $\mathbf{BD}$, with premorphisms as defined above as maps, is a
(small) category.
\end{proposition}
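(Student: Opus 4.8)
The plan is to verify directly the three defining axioms of a category and then note smallness. Writing $B=(V,E)$, $C=(W,S)$, $D=(X,T)$ throughout, and recalling that the composite of $f=((F_n)_n,(f_n)_n)\colon B\to C$ with $g=((G_n)_n,(g_n)_n)\colon C\to D$ is $gf=((H_n)_n,(h_n)_n)$ with $H_n=G_{f_n}F_n$ and $h_n=g_{f_n}$, I would check in turn: (i) that $gf$ is again a premorphism (well-definedness of composition); (ii) that composition is associative; (iii) that each object has an identity premorphism serving as a two-sided unit; and (iv) that both the objects and the hom-sets are genuine sets.

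The main obstacle is (i), and within it the verification of the commuting-square condition for $gf$. The two easy parts come first. Since all entries involved are non-negative and $F_nV_n\le W_{f_n}$, $G_{f_n}W_{f_n}\le X_{g_{f_n}}$, monotonicity gives $H_nV_n=G_{f_n}(F_nV_n)\le G_{f_n}W_{f_n}\le X_{g_{f_n}}=X_{h_n}$, so each $H_n$ is a multiplicity matrix from $V_n$ to $X_{h_n}$. The index sequence $h_n=g_{f_n}$ is non-decreasing, being a composite of the non-decreasing sequences $(f_n)$ and $(g_m)$, and cofinal, since both $(f_n)$ and $(g_m)$ tend to $+\infty$ (a non-decreasing cofinal sequence has supremum $+\infty$), whence $g_{f_n}\to+\infty$.

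For the commuting squares I would start from $H_{n+1}E_n=G_{f_{n+1}}F_{n+1}E_n$, apply the commutativity of $f$ in the form $F_{n+1}E_n=S_{f_nf_{n+1}}F_n$, and then invoke the general commutativity property of $g$ recorded in Definition~\ref{defpre}, namely $G_{f_{n+1}}S_{f_nf_{n+1}}=T_{g_{f_n}g_{f_{n+1}}}G_{f_n}$. Combining these yields $H_{n+1}E_n=T_{h_nh_{n+1}}G_{f_n}F_n=T_{h_nh_{n+1}}H_n$, as required. The subtlety to record here is that the general commutativity of $g$ must be used at the indices $f_n\le f_{n+1}$, which need not be consecutive; it is precisely the passage from single-step commutativity to commutativity along arbitrary multi-step paths (the product identity $T_{g_p g_r}=T_{g_q g_r}T_{g_p g_q}$ for $p\le q\le r$) that makes the composite square close.

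Once composition is known to be well-defined, the remaining axioms are routine book-keeping. For associativity, given $f\colon B\to C$, $g\colon C\to D$, $k=((K_n)_n,(k_n)_n)\colon D\to E$, I would unwind both $(kg)f$ and $k(gf)$ and observe that each has matrix part $K_{g_{f_n}}G_{f_n}F_n$ and index part $k_{g_{f_n}}$, so they agree. For units I would take $\mathrm{id}_B=((I_n)_n,(n)_n)$ with $I_n$ the identity matrix of the size of $V_n$; this is a premorphism (its commuting square collapses to $I_{n+1}E_n=E_n I_n$), and substituting it into the composition formula on either side returns the original premorphism unchanged, using $F_nI_n=F_n$ and $I_{f_n}F_n=F_n$ together with $f_{(\cdot)}=f_{(\cdot)}$ on indices. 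Finally, smallness holds because every Bratteli diagram and every premorphism is coded by finitely-much-per-level data—sequences drawn from the countable set of finite matrices over $\N$ together with a sequence of positive integers—so the objects form a set and, for each ordered pair of objects, the premorphisms form a set; hence $\mathbf{BD}$ is a small category.
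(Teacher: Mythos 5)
Your proposal is correct and takes essentially the same route as the paper's proof: the heart in both is the stacked-square computation $H_{n+1}E_n=G_{f_{n+1}}F_{n+1}E_n=G_{f_{n+1}}S_{f_nf_{n+1}}F_n=T_{h_nh_{n+1}}G_{f_n}F_n=T_{h_nh_{n+1}}H_n$, using the multi-step commutativity of $g$ at the non-consecutive indices $f_n\le f_{n+1}$ (which the paper justifies in the parenthetical remark of Definition~\ref{defpre}), together with the identity premorphism $((I_n)_{n},(n)_{n})$ and associativity via matrix multiplication. You merely make explicit some routine points the paper leaves implicit --- the bound $H_nV_n\le X_{h_n}$, monotonicity and cofinality of $(g_{f_n})_{n}$, the unwinding of $(kg)f$ and $k(gf)$, and smallness --- all of which are verified correctly.
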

\begin{proof}
First let us check that if $f:B\to C$ and $g:C\to D$ are premorphisms, with
$f=((F_{n})_{n=1}^{\infty},(f_{n})_{n=1}^{\infty})$ and
$g=((G_{n})_{n=1}^{\infty},(g_{n})_{n=1}^{\infty})$, then $gf$ given as above, by
$H_{n}=G_{f_{n}}F_{n}$ and $h_{n}=g_{f_{n}}$, is  a premorphism.
Write $B=(V,E)$, $C=(W,J)$, and $D=(Z,T)$.

Let $n$ be a positive integer  and consider the following diagram:

\[
\xymatrix{V_{n}\ar[r]^{E_{n}}\ar[d]_{F_{n}}
 & V_{n+1}\ar[d]^{F_{n+1}}  \\
 W_{f_{n}}\ar[r]_{S_{f_{n}f_{n+1}}}\ar[d]_{G_{f_{n}}}
 & W_{f_{n+1}}\ar[d]^{ G_{f_{n+1}}} \\
 Z_{h_{n}}\ar[r]_{T_{h_{n}h_{n+1}}}  & Z_{h_{n+1}}\ .
 }
\]

\noindent Since $f$ and $g$ are premorphisms we have
\begin{align}
T_{h_{n+1}h_{n}}H_{n} &=
T_{h_{n+1}h_{n}}G_{f_{n}}F_{n}\notag\\
&= G_{f_{n+1}}S_{f_{n}f_{n+1}}F_{n}\notag\\
&= G_{f_{n+1}}F_{n+1}E_{n}\notag\\
&=H_{n+1}E_{n}. \notag
\end{align}
This shows that  $gf$ is indeed a premorphism.

Now for any Bratteli diagram $B=(V,E)$ define the premorphism $\mathrm{id}_{B}:B\to B$ by
$\mathrm{id}_{B}=((I_{n})_{n=1}^{\infty},(i_{n})_{n=1}^{\infty})$, where
$I_{n}$ is the identity matrix of order  equal to the number of columns of
$V_{n}$, and $i_{n}=n$. For any premorphisms  $f:B\to C$ and $h:C\to B$ we have $\mathrm{id}_{B}h=h$ and $f\,\mathrm{id}_{B}=f$.

One checks easily that  associativity holds using the associativity  of matrix multiplication. This completes the proof that $\mathbf{BD}$, with premorphisms, is a category.
\end{proof}
It will be clear later that the category $\mathbf{BD}$ with  premorphisms is
not suitable for the classification of AF algebras and that we need to consider morphisms---consisting of equivalence classes of premorphisms---for the purposes of classification.
\begin{definition}\label{defeq}
Let $B, C$ be Bratteli diagrams, and $f,g: B\to C$ be premorphisms, i.e., maps in the category $\mathbf{BD}$ of Proposition~\ref{propre}, with $B=(V,E)$, $C=(W,S)$,
$f=((F_{n})_{n=1}^{\infty},(f_{n})_{n=1}^{\infty})$, and
$g=((G_{n})_{n=1}^{\infty},(g_{n})_{n=1}^{\infty})$. Let us  say that $f$ is \emph{equivalent} to $g$, and write $f\sim g$, if there are sequences $(n_{k})_{k=1}^{\infty}$ and $(m_{k})_{k=1}^{\infty}$ of
positive integers such that $n_{k}<m_{k}<n_{k+1}$ and $f_{n_{k}}<g_{m_{k}}<f_{n_{k+1}}$
for each $k\geq 1$, and the diagram

\[
\xymatrix{V_{n_{\resizebox{.28em}{.28em}{1}}}
\ar[r]\ar[d]_{F_{n_{\resizebox{.26em}{.26em}{1}}}}
 & V_{m_{\resizebox{.28em}{.28em}{1}}}
 \ar[r]\ar[d]_{G_{m_{\resizebox{.26em}{.26em}{1}}}}  &
 V_{n_{\resizebox{.28em}{.28em}{2}}}
 \ar[r] \ar[d]_{F_{n_{\resizebox{.26em}{.26em}{2}}}}
 & V_{m_{\resizebox{.28em}{.28em}{2}}}
 \ar[r]\ar[d]_{G_{m_{\resizebox{.26em}{.26em}{2}}}} &\cdots \\
 W_{f_{n_{\resizebox{.26em}{.26em}{1}}}}\ar[r]
 &W_{g_{m_{\resizebox{.26em}{.26em}{1}}}}\ar[r] &
 W_{f_{n_{\resizebox{.26em}{.26em}{2}}}}\ar[r]
 & W_{g_{m_{\resizebox{.26em}{.26em}{2}}}}\ar[r]&\cdots
 }
\]

\noindent commutes, i.e., each minimal square commutes: for each $k\geq 1$,
\begin{equation*}
G_{m_{k}}E_{n_{k}m_{k}}=S_{f_{n_{k}}g_{m_{k}}}F_{n_{k}}\ \
\mathrm{and}\ \  F_{n_{k+1}}E_{m_{k}n_{k+1}}=S_{g_{m_{k}}f_{n_{k+1}}}G_{m_{k}}.
\end{equation*}
\end{definition}
At the end of this section we will give two other definitions for equivalence of
premorphisms (Definition~\ref{defeq2} and Definition~\ref{defeq3}).
These may be more natural in some sense (since they do not use subsequences), but we shall
show that all three equivalence relations are the same (Proposition~\ref{proeqpre}).
\begin{remark}
In Definition~\ref{defeq}, we have defined the equivalence of a pair
of non-zero premorphisms. This notion extends in an obvious way to encompass
the zero premorphism (which is defined in the remark following Definition~\ref{defpre}),
since for each Bratteli diagram $B$,
$\mathrm{Hom}(B,0)$ and $\mathrm{Hom}(0,B)$ have only one element.
\end{remark}
\begin{lemma}\label{lemeqpre}
Let $B,C\in \mathbf{BD}$. Then $\sim$ is an equivalence relation on the set of
 premorphisms from $B$ to $C$.
\end{lemma}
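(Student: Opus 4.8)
The plan is to verify the three defining properties of an equivalence relation — reflexivity, symmetry, and transitivity — for $\sim$ on the set of premorphisms from $B$ to $C$. Throughout I will use two facts already recorded: the path property $E_{n\ell}=E_{m\ell}E_{nm}$ of the connecting matrices for $n\le m\le\ell$ (noted after Definition~\ref{defbd}), and the general commutativity of a premorphism, namely that for a premorphism $((F_n),(f_n))$ and $n\le m$ one has $F_mE_{nm}=S_{f_nf_m}F_n$ (the parenthetical statement recorded in Definition~\ref{defpre}, obtained by telescoping the one-step squares). The case where $B$ or $C$ is the zero Bratteli diagram is trivial, since then the relevant $\mathrm{Hom}$-set is a singleton; so I assume $B,C$ are non-zero.

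For reflexivity, I would use that $(f_n)$ is cofinal and non-decreasing, hence $f_n\to+\infty$; thus there is a strictly increasing sequence $t_1<t_2<\cdots$ with $f_{t_1}<f_{t_2}<\cdots$. Setting $n_k=t_{2k-1}$ and $m_k=t_{2k}$ gives $n_k<m_k<n_{k+1}$ and $f_{n_k}<f_{m_k}<f_{n_{k+1}}$, and the two required squares are exactly instances of the general commutativity of $f$; hence $f\sim f$. For symmetry, suppose $f\sim g$ via $(n_k),(m_k)$. I would re-index by the \emph{shift} $p_k=m_k$, $q_k=n_{k+1}$: then $p_k<q_k<p_{k+1}$ and $g_{p_k}<f_{q_k}<g_{p_{k+1}}$ follow at once from the interlacing for $f\sim g$, and the two commuting squares required for $g\sim f$ are precisely the second square of $f\sim g$ at step $k$ and the first square at step $k+1$. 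Hence $g\sim f$.

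Transitivity is the substantive part. Suppose $f\sim g$ via $(n_k),(m_k)$ and $g\sim h$ via $(p_j),(q_j)$; the goal is $f\sim h$. The key preliminary is a \emph{pushing} lemma obtained from general commutativity: from the forward square $G_{m_k}E_{n_km_k}=S_{f_{n_k}g_{m_k}}F_{n_k}$ one deduces, for every $m\ge m_k$, that $G_mE_{n_km}=S_{f_{n_k}g_m}F_{n_k}$, on composing with the internal relation $G_mE_{m_km}=S_{g_{m_k}g_m}G_{m_k}$ and using the path property; symmetrically the backward square can be pushed to all large indices, and likewise for the squares of $g\sim h$. In other words, each commuting square may be extended so that its \emph{target} index is free to be taken arbitrarily large. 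With this flexibility I would build, by a back-and-forth induction, strictly increasing domain indices
\[
a_1<c_1<b_1<c_1'<a_2<c_2<b_2<c_2'<\cdots
\]
together with strictly increasing, interlaced codomain levels $f_{a_1}<g_{c_1}<h_{b_1}<g_{c_1'}<f_{a_2}<\cdots$, as follows. Having chosen $a_i=n_k$, pick (by unboundedness of $(p_j)$) an index $c_i=p_j\ge m_k$ and set $b_i=q_j$; applying the pushed forward squares of $f\sim g$ and of $g\sim h$ and composing through $G_{c_i}$ via the path property yields the single forward square $H_{b_i}E_{a_ib_i}=S_{f_{a_i}h_{b_i}}F_{a_i}$. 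Symmetrically, starting from $b_i=q_j$, pick $c_i'=m_{k'}\ge p_{j+1}$ and then $a_{i+1}=n_{k'+1}$, and compose the pushed backward squares through $G_{c_i'}$ to obtain $F_{a_{i+1}}E_{b_ia_{i+1}}=S_{h_{b_i}f_{a_{i+1}}}H_{b_i}$. The interlacing inequalities hold because at each joint the freely chosen target index lies above the pinned source index it must meet, using $n_k<m_k<n_{k+1}$ and $p_j<q_j<p_{j+1}$. The resulting commuting zigzag, with the $G$-vertices collapsed, is exactly the diagram of Definition~\ref{defeq} witnessing $f\sim h$ with the sequences $(a_i)$ and $(b_i)$.

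I expect the main obstacle to be the bookkeeping in this back-and-forth construction: reconciling the two families of $g$-indices — the $(m_k)$ used by $f\sim g$ and the $(p_j)$ used by $g\sim h$ — into a single interlaced sequence, while keeping both the domain indices and the codomain levels strictly increasing. The pushing lemma is what makes this possible, since it lets every target index be chosen large enough to coincide with whichever pinned source index the next square requires; unboundedness of the four defining sequences then guarantees that such a choice always exists.
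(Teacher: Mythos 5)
Your proof is correct and takes essentially the same route as the paper's: reflexivity from cofinality, symmetry by re-indexing, and transitivity by bridging through the middle premorphism $g$ in an alternating back-and-forth choice of indices, where your ``pushing lemma'' is exactly what the paper does implicitly when it composes each equivalence square with a premorphism square of $g$ in its four-column commuting diagrams. The only cosmetic difference is that you isolate this pushing step as an explicit lemma and verify symmetry in detail (the paper declares it obvious), but the underlying argument is identical.
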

\begin{proof}
It is obvious that $\sim$ is symmetric. Reflexivity follows from Definition~\ref{defpre}, using
the cofinality condition. Let $f,g,h:B\to C$ be premorphisms such that $f\sim g$ and $g\sim h$, and let us show
that $f\sim h$.
Write $B=(V,E)$, $C=(W,S)$,
$f=((F_{n})_{n=1}^{\infty},(f_{n})_{n=1}^{\infty})$,
$g=((G_{n})_{n=1}^{\infty},(g_{n})_{n=1}^{\infty})$, and
$h=((H_{n})_{n=1}^{\infty},(h_{n})_{n=1}^{\infty})$.

Choose sequences $(n_{k})_{k=1}^{\infty}$ and $(m_{k})_{k=1}^{\infty}$ establishing $f\sim g$,
according to Definition~\ref{defeq}, and  sequences
$(p_{k})_{k=1}^{\infty}$ and $(q_{k})_{k=1}^{\infty}$ for $g\sim h$.
Construct sequences $(r_{k})_{k=1}^{\infty}$ and $(s_{k})_{k=1}^{\infty}$
inductively as follows to show that $f\sim h$.
Set $n_{1}=r_{1}$. There is $k_{0}\geq 1$ such that $p_{k_{0}}> m_{1}$;
set $q_{k_{0}}=s_{1}$. Each square in the  diagram

\[
\xymatrix{V_{r_{\resizebox{.28em}{.28em}{1}}}\ar[r]
\ar[d]_{F_{r_{\resizebox{.26em}{.26em}{1}}}}
 & V_{m_{\resizebox{.28em}{.28em}{1}}}\ar[r]
 \ar[d]_{G_{m_{\resizebox{.26em}{.26em}{1}}}}  &
 V_{p_{k_{\resizebox{.26em}{.26em}{0}}}}\ar[r]
 \ar[d]_{G_{p_{k_{\resizebox{.26em}{.26em}{0}}}}}
 & V_{s_{\resizebox{.28em}{.28em}{1}}}
 \ar[d]_{H_{s_{\resizebox{.26em}{.26em}{1}}}}  \\
 W_{f_{r_{\resizebox{.26em}{.26em}{1}}}}\ar[r]
 &W_{g_{m_{\resizebox{.26em}{.26em}{1}}}}\ar[r] &
 W_{g_{p_{k_{\resizebox{.26em}{.26em}{0}}}}}\ar[r]
 &W_{h_{s_{\resizebox{.26em}{.26em}{1}}}}
 }
\]

\noindent  commutes, by the definitions of $f\sim g$ and
$g\sim h$, and since $g$ is a premorphism. Thus,
$H_{s_{1}}E_{r_{1}s_{1}}= S_{f_{r_{1}}h_{s_{1}}} F_{r_{1}}$.

There is $k_{1}\geq 1$ such that $m_{k_{1}}>p_{k_{0}+1}$; set $n_{k_{1}+1}=r_{2}$.
 In the diagram

\[
\xymatrix{V_{s_{\resizebox{.28em}{.28em}{1}}}\ar[r]
\ar[d]_{H_{s_{\resizebox{.26em}{.26em}{1}}}}
 & V_{p_{k_{\resizebox{.26em}{.26em}{0}}+1}}\ar[r]
 \ar[d]_{G_{p_{k_{\resizebox{.26em}{.26em}{0}}+1}}}  &
 V_{m_{k_{\resizebox{.26em}{.26em}{1}}}}\ar[r]
 \ar[d]_{G_{m_{k_{\resizebox{.26em}{.26em}{1}}}}}
 & V_{r_{\resizebox{.28em}{.28em}{2}}}
 \ar[d]_{F_{r_{\resizebox{.26em}{.26em}{2}}}}  \\
 W_{h_{s_{\resizebox{.26em}{.26em}{1}}}}\ar[r]
 &W_{g_{p_{k_{\resizebox{.26em}{.26em}{0}}+1}}}\ar[r] &
 W_{g_{m_{k_{\resizebox{.26em}{.26em}{1}}}}}\ar[r]
 &W_{f_{r_{\resizebox{.26em}{.26em}{2}}}}\ ,
 }
\]

\noindent each square commutes, and so we have
$F_{r_{2}}E_{s_{1}r_{2}}= S_{h_{s_{1}}f_{r_{2}}}H_{s_{1}} $.

Continuing this procedure we obtain sequences
$(r_{k})_{k=1}^{\infty}$ and $(s_{k})_{k=1}^{\infty}$ with
$r_{1}<s_{1}<r_{2}<s_{2}<\cdots$ and
$f_{r_{1}}<h_{s_{1}}<f_{r_{2}}<h_{s_{2}}<\cdots$ such that commutativity holds as required
in Definition~\ref{defeq} for $f\sim h$. This shows that
$\sim$ is transitive, and  so it is an equivalence relation.
\end{proof}
Let us call an equivalence class of premorphisms between Bratteli diagrams $B$ and $C$,
with respect to the relation $\sim$, a \emph{morphism} from $B$ to $C$.
Let us denote the equivalence class of a premorphism
$f:B\to C$  by $[f]:B\to C$, or if there is  no confusion, just   by $f$.

The equivalence class of the zero premorphism (which makes sense only
when $B=0$ or $C=0$) is called the zero morphism
(see the remark preceding Lemma~\ref{lemeqpre}).

The composition of morphisms $[f]:B\to C$ and $[g]:C\to D$ should defined as
$[gf]:B\to D$ where $gf$ is the composition of premorphisms. This composition is
well~defined, as is shown in the proof of the following theorem.
\begin{theorem}\label{thrbd}
The set $\mathbf{BD}$, with  morphisms as defined above, is a category.
\end{theorem}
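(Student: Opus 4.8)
The plan is to leverage what we already have. By Proposition~\ref{propre} the premorphisms form a category, and by Lemma~\ref{lemeqpre} the relation $\sim$ is an equivalence relation on each premorphism hom-set. Since the identity premorphism satisfies $\mathrm{id}_{C}f=f$ and $f\,\mathrm{id}_{B}=f$ already at the level of premorphisms, its class is automatically a two-sided identity once composition of classes makes sense; and associativity of composition of classes is inherited directly from the associativity established in Proposition~\ref{propre}. Thus the only real content is to show that $[g][f]:=[gf]$ is \emph{well defined} on equivalence classes, i.e. that $f\sim f'$ and $g\sim g'$ imply $gf\sim g'f'$. I would factor this through two one-sided statements, namely right compatibility ($f\sim f'\Rightarrow gf\sim gf'$ for fixed $g$) and left compatibility ($g\sim g'\Rightarrow gf'\sim g'f'$ for fixed $f'$), and then conclude $gf\sim gf'\sim g'f'$ by transitivity (Lemma~\ref{lemeqpre}).

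The technical heart is a preliminary observation that the commuting squares witnessing an equivalence propagate to all ``bracketing'' index pairs. Writing $f\sim f'$ with witnessing sequences $(n_{k}),(m_{k})$ as in Definition~\ref{defeq}, I claim that for any $n<m$ with $n\leq n_{k}<m_{k}\leq m$ for some $k$ one still has $F'_{m}E_{nm}=S_{f_{n}f'_{m}}F_{n}$, and symmetrically across a reverse witnessing pair. This follows by a telescoping computation: insert the factorization $E_{nm}=E_{m_{k}m}E_{n_{k}m_{k}}E_{nn_{k}}$, rewrite $F'_{m}E_{m_{k}m}=S_{f'_{m_{k}}f'_{m}}F'_{m_{k}}$ using commutativity of the premorphism $f'$, replace $F'_{m_{k}}E_{n_{k}m_{k}}$ by $S_{f_{n_{k}}f'_{m_{k}}}F_{n_{k}}$ using the witnessing square, and finally rewrite $F_{n_{k}}E_{nn_{k}}=S_{f_{n}f_{n_{k}}}F_{n}$ using commutativity of $f$; the edge products collapse via $S_{pq}S_{rp}=S_{rq}$. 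The same chain of inequalities $f_{n}\leq f_{n_{k}}<f'_{m_{k}}\leq f'_{m}$ shows that the bracketing inequality $f_{n}<f'_{m}$ is automatically strict.

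With this in hand, right compatibility reduces to index bookkeeping. I would build witnessing sequences $(a_{k}),(b_{k})$ for $gf\sim gf'$ inductively: having chosen $a_{k}$, use cofinality of the index sequence $(g_{n})$ of $g$ to pick $b_{k}>a_{k}$ so large that $[a_{k},b_{k}]$ brackets a witnessing pair of $f\sim f'$ and $g_{f'_{b_{k}}}>g_{f_{a_{k}}}$ strictly, then pick $a_{k+1}>b_{k}$ similarly with $g_{f_{a_{k+1}}}>g_{f'_{b_{k}}}$. The required composite square reads $G_{f'_{b_{k}}}F'_{b_{k}}E_{a_{k}b_{k}}=T_{g_{f_{a_{k}}}g_{f'_{b_{k}}}}G_{f_{a_{k}}}F_{a_{k}}$, which I verify by first applying the bracketed square to get $F'_{b_{k}}E_{a_{k}b_{k}}=S_{f_{a_{k}}f'_{b_{k}}}F_{a_{k}}$ and then applying the general commutativity of the premorphism $g$ (Definition~\ref{defpre}) to move $G_{f'_{b_{k}}}S_{f_{a_{k}}f'_{b_{k}}}=T_{g_{f_{a_{k}}}g_{f'_{b_{k}}}}G_{f_{a_{k}}}$; the reverse squares are handled identically. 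Left compatibility is the mirror-image argument, now using the bracketed squares for $g\sim g'$ and transporting them through $f'$ on the right via commutativity of $f'$ and cofinality of $(f'_{n})$.

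The main obstacle I anticipate is precisely this bookkeeping: the index sequences attached to premorphisms are only non-strictly increasing, whereas Definition~\ref{defeq} demands the strict interleaving $f_{n_{k}}<g_{m_{k}}<f_{n_{k+1}}$, so one cannot simply transport the witnessing pairs of $f\sim f'$ through $g$, since they may collapse to equalities. The bracketed-square observation is what buys the freedom to reselect indices, and cofinality of the relevant index sequence is what restores strictness; together they make the construction go through. The identity and associativity axioms, together with the degenerate cases involving the zero Bratteli diagram (whose relevant hom-sets are singletons, so there is nothing to check), then complete the proof that $\mathbf{BD}$ with morphisms is a category.
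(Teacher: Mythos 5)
Your proposal is correct, but it is organized differently from the paper's proof, so a comparison is worth recording. The paper proves well-definedness of $[g][f]=[gf]$ in a single two-sided induction: given witnessing sequences for $f\sim l$ and $g\sim h$, it builds one interleaved pair $(r_{k}),(s_{k})$ directly, verifying each required composite square by stacking a four-square ladder at the middle level (the squares coming from $g$, $h$ being premorphisms and from $g\sim h$) on top of the squares coming from $f\sim l$, with cofinality invoked at each step to advance the indices. You instead factor the problem into right compatibility ($f\sim f'\Rightarrow gf\sim gf'$) and left compatibility ($g\sim g'\Rightarrow gf'\sim g'f'$), glued by transitivity from Lemma~\ref{lemeqpre}, and you isolate the technical mechanism as a ``bracketing'' lemma: the witnessing squares of Definition~\ref{defeq} propagate, via the telescoping computation you describe, to every index pair enclosing a witnessing pair, with strictness of $f_{n}<f'_{m}$ coming along for free from $f_{n}\leq f_{n_{k}}<f'_{m_{k}}\leq f'_{m}$. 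That bracketing lemma is essentially the implication from Definition~\ref{defeq} to Definition~\ref{defeq2} that the paper only establishes later, in Proposition~\ref{proeqpre}; by proving it up front you make the diagram chase modular and each one-sided step a short application of the general commutativity of a single premorphism, whereas the paper's simultaneous construction avoids any appeal to transitivity (though Lemma~\ref{lemeqpre} is available to both) at the cost of a more intricate one-pass bookkeeping. You also correctly identify the one genuine pitfall---that the index sequences $f_{n},g_{n}$ are only non-strictly increasing while Definition~\ref{defeq} demands strict interleaving---and your remedy (reselecting indices via the bracketing lemma and restoring strictness via cofinality of $(g_{n})$ composed with the cofinal $(f'_{n})$) is exactly what is needed; your dispatching of identities, associativity, and the zero-diagram degeneracies is likewise fine.
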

\begin{proof}
First, we must show that the composition of two morphisms is well defined, i.e.,  independent of the choice of representatives.
Let $f,l:B\to C$ and $g,h:C\to D$ be premorphisms such that
$f\sim l$ and $g\sim h$, and let us show that $gf\sim hl$.

Write $B=(V,E)$, $C=(W,S)$, and $D=(Z,T)$, and
 $f=((F_{n})_{1}^{\infty}, (f_{n})_{n=1}^{\infty})$,
$g=((G_{n})_{n=1}^{\infty}, (g_{n})_{1}^{\infty})$, and
$gf=((U_{n})_{n=1}^{\infty}, (u_{n})_{1}^{\infty})$. Then $u_{n}=g_{f_{n}}$ and
$U_{n}=G_{f_{n}}F_{n}$, according to Definition~\ref{defpre}.
Also write $h=((H_{n})_{1}^{\infty}, (h_{n})_{n=1}^{\infty})$,
$l=((L_{n})_{n=1}^{\infty}, (l_{n})_{1}^{\infty})$, and
$hl=((X_{n})_{n=1}^{\infty}, (x_{n})_{1}^{\infty})$. Then $x_{n}=h_{l_{n}}$ and
$X_{n}=H_{l_{n}}L_{n}$, according to Definition~\ref{defpre}.

Let $(n_{k})_{k=1}^{\infty}$ and
$(m_{k})_{k=1}^{\infty}$ be sequences exhibiting the equivalence $f\sim l$, and $(p_{k})_{k=1}^{\infty}$
and $(q_{k})_{k=1}^{\infty}$ sequences exhibiting the equivalence $g\sim h$, according to Definition~\ref{defeq}.
Let us construct  sequences $(r_{k})_{k=1}^{\infty}$ and
$(s_{k})_{k=1}^{\infty}$ exhibiting the equivalence $gf\sim hl$. Set $n_{1}=r_{1}$. There is $k_{0}\geq 1$
such that $p_{k_{0}}>f_{n_{1}}$. There is $k_{1}\geq 1$ such that $l_{m_{k_{1}}}\geq q_{k_{0}}$;
set $s_{1}=m_{k_{1}}$.
Consider the diagram

\[
\xymatrix{
W_{f_{r_{\resizebox{.26em}{.26em}{1}}}}\ar[r]
\ar[d]_{G_{f_{r_{\resizebox{.26em}{.26em}{1}}}}}
 & W_{p_{k_{\resizebox{.26em}{.26em}{0}}}}
 \ar[r]\ar[d]_{G_{p_{k_{\resizebox{.26em}{.26em}{0}}}}}
 & W_{q_{k_{\resizebox{.26em}{.26em}{0}}}}\ar[r]
 \ar[d]_{H_{q_{k_{\resizebox{.26em}{.26em}{0}}}}}
 & W_{l_{s_{\resizebox{.26em}{.26em}{1}}}}
 \ar[d]_{H_{l_{s_{\resizebox{.26em}{.26em}{1}}}}}  \\
 Z_{g_{f_{r_{\resizebox{.26em}{.26em}{1}}}}}\ar[r]
 &Z_{g_{p_{k_{\resizebox{.26em}{.26em}{0}}}}}\ar[r] &
 Z_{h_{q_{k_{\resizebox{.26em}{.26em}{0}}}}}\ar[r]
 &Z_{h_{l_{s_{\resizebox{.26em}{.26em}{1}}}}}\ .
 }
 \]

\noindent Each square in this diagram commutes, because $g$ and $h$ are
premorphisms and $g\sim h$. Thus
$H_{l_{s_{1}}}S_{f_{r_{1}}l_{s_{1}}}=T_{u_{r_{1}}x_{s_{1}}}G_{f_{r_{1}}}$ (note that
$u_{r_{1}}=g_{f_{r_{1}}}$ and $x_{s_{1}}=h_{l_{s_{1}}}$).
Hence the  diagram

\[
\xymatrix{
V_{r_{\resizebox{.28em}{.28em}{1}}}\ar[r]
\ar[d]_{F_{r_{\resizebox{.26em}{.26em}{1}}}} &
V_{s_{\resizebox{.28em}{.28em}{1}}}
\ar[d]^{L_{s_{\resizebox{.26em}{.26em}{1}}}}\\
W_{f_{r_{\resizebox{.26em}{.26em}{1}}}}
\ar[r]\ar[d]_{G_{f_{r_{\resizebox{.26em}{.26em}{1}}}}}
 & W_{l_{s_{\resizebox{.26em}{.26em}{1}}}}
 \ar[d]^{H_{l_{s_{\resizebox{.26em}{.26em}{1}}}}} \\
 Z_{u_{r_{\resizebox{.26em}{.26em}{1}}}}\ar[r]
 &Z_{x_{s_{\resizebox{.26em}{.26em}{1}}}}
 }
\]

\noindent commutes, and then we have $r_{1}<s_{1}$, $u_{r_{1}}<x_{s_{1}}$, and
$X_{s_{1}}E_{r_{1}s_{1}}=T_{u_{r_{1}}x_{s_{1}}}U_{r_{1}}$.

There  is $k_{2}\geq 1$ such that $q_{k_{2}}> l_{s_{1}}$. There is
$k_{3}\geq 1$ with $f_{n_{k_{3}}}\geq p_{k_{2}+1}$; set $n_{k_{3}}=r_{2}$.
Each square in the following diagram commutes:

\[
\xymatrix{
W_{l_{s_{\resizebox{.26em}{.26em}{1}}}}\ar[r]
\ar[d]_{H_{l_{s_{\resizebox{.26em}{.26em}{1}}}}}
 & W_{q_{k_{\resizebox{.26em}{.26em}{2}}}}
 \ar[r]\ar[d]_{H_{q_{k_{\resizebox{.26em}{.26em}{2}}}}}
 & W_{p_{k_{\resizebox{.26em}{.26em}{2}}+1}}
 \ar[r] \ar[d]_{G_{p_{k_{\resizebox{.26em}{.26em}{2}}+1}}}
 & W_{f_{r_{2}}}
 \ar[d]_{G_{f_{r_{\resizebox{.26em}{.26em}{2}}}}}  \\
 Z_{h_{l_{s_{\resizebox{.26em}{.26em}{1}}}}}\ar[r]
 &Z_{h_{q_{k_{\resizebox{.26em}{.26em}{2}}}}}
 \ar[r] &Z_{g_{p_{k_{\resizebox{.26em}{.26em}{2}}+1}}}\ar[r]
 &Z_{g_{f_{r_{\resizebox{.26em}{.26em}{2}}}}}\ .
 }
 \]

\noindent Thus, we have $G_{f_{r_{2}}}S_{l_{s_{1}}f_{r_{2}}}=
T_{x_{s_{1}}u_{r_{2}}}H_{l_{s_{1}}}$. Therefore the diagram

\[
\xymatrix{
V_{s_{\resizebox{.28em}{.28em}{1}}}
\ar[r]\ar[d]_{L_{s_{\resizebox{.26em}{.26em}{1}}}} &
V_{r_{\resizebox{.28em}{.28em}{2}}}
\ar[d]^{F_{r_{\resizebox{.26em}{.26em}{2}}}}\\
W_{f_{r_{\resizebox{.26em}{.26em}{1}}}}\ar[r]
\ar[d]_{H_{l_{s_{\resizebox{.26em}{.26em}{1}}}}}
 & W_{l_{s_{\resizebox{.26em}{.26em}{1}}}}
 \ar[d]^{G_{f_{r_{\resizebox{.26em}{.26em}{2}}}}}\\
 Z_{x_{s_{\resizebox{.26em}{.26em}{1}}}}\ar[r]
 &Z_{u_{r_{\resizebox{.26em}{.26em}{2}}}}
 }
\]

\noindent commutes, and then we have $s_{1}<r_{2}$, $x_{s_{1}}<u_{r_{2}}$, and
$U_{r_{2}}E_{s_{1}r_{2}}=T_{x_{s_{1}}u_{r_{2}}}X_{s_{1}}$.

Continuing this procedure, we obtain sequences $(r_{k})_{k=1}^{\infty}$ and
$(s_{k})_{k=1}^{\infty}$ such that
$r_{1}<s_{1}<r_{2}<s_{2}<\cdots$ and
$u_{r_{1}}<x_{s_{1}}<u_{r_{2}}<x_{s_{2}}<\cdots$ and the commutativity
required in Definition~\ref{defeq} for $gf\sim hl$ is valid. Hence, $[gf]=[hl]$.

Finally, since by Proposition~\ref{propre} $\mathbf{BD}$ with  premorphisms is a category, it follows that composition of morphisms as defined, which we have shown is well~defined, makes $\mathbf{BD}$   a category.
\end{proof}
Let us refer to $\mathbf{BD}$, with  morphisms as defined above, as the  \emph{category of Bratteli diagrams}.

It is tempting to propose an alternative definition for equivalence of two premorphisms (cf.~Definition~\ref{defeq})
as follows: $f\sim g$ if the diagram containing both $f$ and $g$
commutes, in the sense that each triangle and each square in the diagram is commutative (alternatively, any two paths with the same endpoints agree).
However, this relation is not transitive, even if we strengthen the definition
of  a premorphism to consist of only embedding matrices
instead of  multiplicity matrices.
The point is that an embedding matrix is not necessarily injective, as is seen with the following example.
\begin{example}
The following embedding matrices $E_{1}$, $E_{2}$, and $E_{3}$ are such that
$E_{3}E_{1}=E_{3}E_{2}$, but $E_{1}\neq E_{2}$. Let
$V_{1}=(1)$,
$V_{2}=
\left(
 \begin{smallmatrix}
 2\\
 2
 \end{smallmatrix}
 \right)$,
$V_{3}=(6)$,
$E_{1}=\left(
 \begin{smallmatrix}
 2\\
 1
 \end{smallmatrix}
 \right)$,
$E_{2}=\left(
 \begin{smallmatrix}
 0\\
 2
\end{smallmatrix}
\right)$, and
$E_{3}=(1\ 2)$.
Thus, $E_{1},E_{2}: V_{1}\to  V_{2}$ and $E_{3}: V_{2}\to  V_{3}$ are embedding
matrices. We have $E_{3}E_{1}=E_{3}E_{2}=(4)$, but $E_{1}\neq E_{2}$. The only thing
we can say is that there is a unitary $u\in C^{*}(V_{3})$ such that
$h(E_{3})h(E_{1})=(\mathrm{Ad}\,{u})h(E_{3})h(E_{2})$, by Lemma~\ref{lempr}
and Lemma~\ref{lemun}, but
$h(E_{1})\neq h(E_{2})$, since $E_{1}\neq E_{2}$. (See the remark following
Lemma~\ref{lempr} for notation.)
\end{example}
Here are two correct alternative formulations of the definition for equivalence of premorphisms.
We shall use the first one in a number of places later.
\begin{definition}\label{defeq2}
Let $f,g: B\to C$ be premorphisms in $\mathbf{BD}$ such that $B=(V,E)$, $C=(W,S)$,
$f=((F_{n})_{n=1}^{\infty},(f_{n})_{n=1}^{\infty})$, and
$g=((G_{n})_{n=1}^{\infty},(g_{n})_{n=1}^{\infty})$. Let us say that $f$ is \emph{equivalent} to $g$, in the second sense,
 if for each $n\geq 1$ there is an $m\geq f_{n},g_{n}$ such that
 $S_{f_{n}m}F_{n}=S_{g_{n}m}G_{n}$.
\end{definition}
\begin{definition}\label{defeq3}
Let $f,g: B\to C$ be premorphisms in $\mathbf{BD}$ such that $B=(V,E)$, $C=(W,S)$,
$f=((F_{n})_{n=1}^{\infty},(f_{n})_{n=1}^{\infty})$, and
$g=((G_{n})_{n=1}^{\infty},(g_{n})_{n=1}^{\infty})$. Let us say that $f$ is \emph{equivalent} to $g$, in the third sense,
 if for each $n\geq 1$ and for each $k\geq n$, there is an
 $m\geq f_{n},g_{k}$ such that $S_{f_{n}m}F_{n}=S_{g_{k}m}G_{k}E_{nk}$.
\end{definition}
Let us show that these two definitions are equivalent to Definition \ref{defeq}.
\begin{proposition}\label{proeqpre}
Definitions \ref{defeq}, \ref{defeq2}, and \ref{defeq3} are
 equivalent.
\end{proposition}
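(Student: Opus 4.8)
The plan is to fix a pair of premorphisms $f,g\colon B\to C$ and show that the three defining conditions are logically equivalent for this pair. The cheap pair is Definition~\ref{defeq2} $\Leftrightarrow$ Definition~\ref{defeq3}; the substantive content is Definition~\ref{defeq} $\Leftrightarrow$ Definition~\ref{defeq2}. Throughout, the basic computational tool is the \emph{telescoping identity} for a premorphism, obtained by iterating the commuting square of Definition~\ref{defpre}: for $n\le k$,
\[
F_{k}E_{nk}=S_{f_{n}f_{k}}F_{n},\qquad G_{k}E_{nk}=S_{g_{n}g_{k}}G_{n},
\]
together with the elementary observation that an equalization \emph{persists upward}: if $S_{f_{n}m}F_{n}=S_{g_{n}m}G_{n}$ for some $m\ge f_{n},g_{n}$, then left-multiplying by $S_{mm'}$ gives the same identity with $m$ replaced by any $m'\ge m$ (using $S_{mm'}S_{f_{n}m}=S_{f_{n}m'}$).

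The equivalence of Definitions~\ref{defeq2} and \ref{defeq3} is then quick. Taking $k=n$ in Definition~\ref{defeq3} and using $E_{nn}=I$ recovers Definition~\ref{defeq2}. Conversely, given $n\le k$, apply Definition~\ref{defeq2} at level $k$ to get $m\ge f_{k},g_{k}$ with $S_{f_{k}m}F_{k}=S_{g_{k}m}G_{k}$; combining with the telescoping identity $F_{k}E_{nk}=S_{f_{n}f_{k}}F_{n}$ yields $S_{f_{n}m}F_{n}=S_{f_{k}m}F_{k}E_{nk}=S_{g_{k}m}G_{k}E_{nk}$, which is the condition of Definition~\ref{defeq3}.

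For Definition~\ref{defeq} $\Rightarrow$ Definition~\ref{defeq2}, I would first read off Definition~\ref{defeq2} at the special indices $n_{k}$. The first family of squares in Definition~\ref{defeq} says $G_{m_{k}}E_{n_{k}m_{k}}=S_{f_{n_{k}}g_{m_{k}}}F_{n_{k}}$; rewriting the left side by the telescoping identity for $g$ as $S_{g_{n_{k}}g_{m_{k}}}G_{n_{k}}$ gives $S_{f_{n_{k}}g_{m_{k}}}F_{n_{k}}=S_{g_{n_{k}}g_{m_{k}}}G_{n_{k}}$, which is exactly Definition~\ref{defeq2} at level $n_{k}$ with $m=g_{m_{k}}$ (note $g_{m_{k}}>f_{n_{k}}$ and $g_{m_{k}}\ge g_{n_{k}}$). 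Since $(n_{k})$ is strictly increasing, hence cofinal, I then propagate to an arbitrary level $n$: choosing $n_{k}\ge n$ and using the telescoping identities for both $f$ and $g$ to push $F_{n}$ and $G_{n}$ up to level $n_{k}$ converts the equalization at $n_{k}$ into one at $n$.

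The main work, and the one genuine obstacle, is Definition~\ref{defeq2} $\Rightarrow$ Definition~\ref{defeq}, an Elliott-style back-and-forth (intertwining) construction. Assuming Definition~\ref{defeq2}, for each $n$ fix $m(n)\ge f_{n},g_{n}$ witnessing the equalization. I would build $n_{1}<m_{1}<n_{2}<m_{2}<\cdots$ inductively: having chosen $n_{k}$, use cofinality of $(g_{j})$ to pick $m_{k}>n_{k}$ with $g_{m_{k}}>\max(f_{n_{k}},m(n_{k}))$, then use cofinality of $(f_{j})$ to pick $n_{k+1}>m_{k}$ with $f_{n_{k+1}}>\max(g_{m_{k}},m(m_{k}))$. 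The persistence-upward remark guarantees that the equalizations furnished by Definition~\ref{defeq2} at levels $n_{k}$ and $m_{k}$ survive at the heights $g_{m_{k}}$ and $f_{n_{k+1}}$ respectively; combined with the telescoping identities (for $g$ in the first square, for $f$ in the second) these give exactly the two families $G_{m_{k}}E_{n_{k}m_{k}}=S_{f_{n_{k}}g_{m_{k}}}F_{n_{k}}$ and $F_{n_{k+1}}E_{m_{k}n_{k+1}}=S_{g_{m_{k}}f_{n_{k+1}}}G_{m_{k}}$ required by Definition~\ref{defeq}, while the same choices enforce the index inequalities $f_{n_{k}}<g_{m_{k}}<f_{n_{k+1}}$. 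The delicate point is precisely that the thresholds $m(n_{k})$ and $m(m_{k})$ can always be cleared at the next step, which is where cofinality of $(f_{j})$ and $(g_{j})$ is essential, so that the interleaving conditions and the commutativity relations hold simultaneously at every stage.
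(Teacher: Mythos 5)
Your proposal is correct and takes essentially the same route as the paper's proof: the same telescoping identities, upward persistence of equalizations, and a cofinality-driven interleaving construction for the hard direction---indeed the paper's step Definition~\ref{defeq3} $\Rightarrow$ Definition~\ref{defeq} in fact only invokes the $k=n$ instance of Definition~\ref{defeq3}, so it coincides with your direct Definition~\ref{defeq2} $\Rightarrow$ Definition~\ref{defeq} argument. The remaining differences are organizational (you arrange the implications around Definition~\ref{defeq2} rather than cyclically, and your right-multiplication by $E_{nn_{k}}$ replaces the paper's two-case split in the direction Definition~\ref{defeq} $\Rightarrow$ Definition~\ref{defeq2} with a uniform computation), not mathematical.
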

\begin{proof}
The fact that these definitions are equivalent is based on the following observation.
If we assume that a pair of premorphisms are equivalent in the sense of any one of these definitions, then the union of the
corresponding diagrams is commutative at infinity, in the sense that any two
paths with the same endpoints agree, after going sufficiently further
out, i.e., composing with a long enough subsequent path.  In fact, this is, in each sense, just a reformulation of the definition. (But let us proceed, more prosaically perhaps, in cyclic order.)

\emph{Definition~\ref{defeq} implies Definition~\ref{defeq2}}: Suppose that $f\sim g$ in the sense
of Definition~\ref{defeq}. Let $n\geq 1$ and assume that $f_{n}\leq g_{n}$.
There is $k\geq 1$ such that $n_{k}\geq n$. Thus,
$f_{n}\leq f_{n_{k}}$ and $g_{n} \leq g_{m_{k}}$.
Using Definition~\ref{defeq} and the fact that $f$, $g$ are premorphisms
we have
\begin{align}
S_{f_{n}g_{m_{k}}}F_{n} &=
S_{f_{n_{k}}g_{m_{k}}}S_{f_{n}f_{n_{k}}}F_{n}\notag\\
&= S_{f_{n_{k}}g_{m_{k}}}F_{n_{k}}E_{nn_{k}}\notag\\
&= G_{m_{k}}E_{n_{k}m_{k}}E_{nn_{k}}\notag\\
&=S_{g_{n}g_{m_{k}}}G_{n}. \notag
\end{align}
Therefore, if
$f_{n}\leq g_{n}$ we have
$S_{f_{n}g_{m_{k}}}F_{n}=S_{g_{n}g_{m_{k}}}G_{n}$.
Similarly, if $g_{n}< f_{n}$, there is $l\geq 1$ such that
$g_{n}\leq f_{n}\leq f_{m_{l}}$ and
$S_{f_{n}f_{m_{l}}}F_{n}=S_{g_{n}f_{m_{l}}}G_{n}$.
Set $\max (g_{m_{k}},f_{m_{l}})=m$.
Then we have $S_{f_{n}m}F_{n}=S_{g_{n}m}G_{n}$.

\emph{Definition~\ref{defeq2} implies Definition~\ref{defeq3}}:
Suppose that $k$, $n$ are positive integers with $k\geq n$.
Applying Definition~\ref{defeq2}, we get $m\geq f_{k},g_{k}$
such that $S_{f_{k}m}F_{k}=S_{g_{k}m}G_{k}$.
Using Definition~\ref{defeq2} and the fact that $f$ is a premorphism
we have
\begin{align}
S_{f_{n}m}F_{n} &=
S_{f_{k}m}S_{f_{n}f_{k}}F_{n}\notag\\
&= S_{f_{k}m}F_{k}E_{nk}\notag\\
&= S_{g_{k}m}G_{k}E_{nk}. \notag
\end{align}
Therefore,
$S_{f_{n}m}F_{n}=S_{g_{k}m}G_{k}E_{nk}$.

\emph{Definition~\ref{defeq3} implies Definition~\ref{defeq}}:
Set $n_{1}=1$. Applying Definition~\ref{defeq3}, one obtains
$m\geq f_{n_{1}}, g_{n_{1}}$ such that
$S_{f_{n_{1}}m}F_{n}=S_{g_{n_{1}}m}G_{n}$.
Since $\{g_{k}\mid k\geq 1\}$ is cofinal in $\mathbb{N}$,
there is $m_{1}>n_{1}$ such that $g_{m_{1}}>m$.
Using Definition~\ref{defeq2} and the fact that $g$ is a premorphism
we have
\begin{equation}
S_{f_{n_{1}}g_{m_{1}}}F_{n_{1}} =
S_{g_{n_{1}}g_{m_{1}}}G_{n_{1}}
= G_{m_{1}}E_{n_{1}m_{1}}\notag.
\end{equation}
Continuing this procedure, we obtain sequences
$(n_{k})_{k=1}^{\infty}$ and $(m_{k})_{k=1}^{\infty}$ of
positive integers that satisfy Definition~\ref{defeq}.
\end{proof}
\section{The Category of AF algebras and the Functor $\mathcal{B}$}
\noindent To define the category of AF algebras $\mathbf{AF}$, in such a way that we will be able
to define a functor $\mathcal{B}$ from $\mathbf{AF}$ to $\mathbf{BD}$,
first we need to identify exactly what  the Bratteli diagram of an AF algebra depends on.

Let $A=\overline{\bigcup_{n\geq 1}A_{n}}$ be an AF~algebra, where
$(A_{n})_{n=1}^{\infty}$ is an increasing sequence of finite dimensional
C$^{*}$-subalgebras of $A$. Since there are infinitely  many sequences with this
property, we need to fix one of them. Each $A_{n}$ is $\ast$-isomorphic to a
finite dimensional C$^{*}$-algebra
$A_{n}^{'}=\mathcal{M}_{m_{1}}\oplus\cdots \oplus\mathcal{M}_{m_{l}}$ via
the map $\varphi_{n}: A_{n}\to A_{n}'$. We then obtain the $\ast$-homomorphism
$\varphi_{n}': A_{n}'\to A_{n+1}'$ with $\varphi_{n}'=\varphi_{n+1}\varphi_{n}^{-1}$, and the following diagram commutes:

\[
\xymatrix{
A_{n}\ar@{^{(}->}[r]\ar[d]_{\varphi_{n}} & A_{n+1} \ar[d]^{\varphi_{n+1}}\\
A_{n}'\ar[r]_{\varphi_{n}'} & A_{n+1}'\ \ .
 }
\]

\noindent By Theorem~\ref{thrfd}, there is a  multiplicity  matrix $E_{n}$ corresponding
to $\varphi_{n}'$; that is, $E_{n}=R_{\varphi_{n}'}$. But $E_{n}$ depends on $\varphi_{n}$
and $\varphi_{n+1}$, as a different choice of $\varphi_{n}$ permuting identical direct summands of
$A_{n}'$ of course results in a different $R_{\varphi_{n}'}$.

\begin{definition}\label{defaf}
Let $\mathbf{AF}$ denote the category whose objects are all triples
$(A, (A_{n})_{n=1}^{\infty},(\varphi_{n})_{n=1}^{\infty})$ where $A$
is an AF algebra, $(A_{n})_{n=1}^{\infty}$ is an increasing sequence of
finite dimensional C$^{*}$-subalgebras of $A$ such that
$\bigcup_{n=1}^{\infty}A_{n}$ is dense in $A$, and each $\varphi_{n}$ is a
$*$-isomorphism from $A_{n}$ onto a  C$^{*}$-algebra
$A_{n}^{'}=\mathcal{M}_{m_{1}}\oplus\cdots \oplus\mathcal{M}_{m_{l}}$
for some $m_{1},\ldots,m_{l}$ depending on $A_{n}$.

Let $\mathcal{A}_{1}=(A, (A_{n})_{n=1}^{\infty},(\varphi_{n})_{n=1}^{\infty})$ and
$\mathcal{A}_{2}=(B, (B_{n})_{n=1}^{\infty},(\psi_{n})_{n=1}^{\infty})$
be in $\mathbf{AF}$.
By a  \emph{morphism} $\varphi$ from $\mathcal{A}_{1}$ to
$\mathcal{A}_{2}$ let us just mean a $*$-homomorphism from $A$ to $B$. Then $\mathbf{AF}$ with  morphisms thus defined is a category.
Let us call $\mathbf{AF}$ with  morphisms as defined the \emph{category of AF algebras}.
\end{definition}
\begin{remark}
In Definition~\ref{defaf}, we have fixed  a sequence of $*$-isomorphisms
$(\varphi_{n})_{n=1}^{\infty}$ for the AF~algebra
$(A, (A_{n})_{n=1}^{\infty},(\varphi_{n})_{n=1}^{\infty})$
to  be able to associate a particular Bratteli diagram to the algebra.
In \cite{br72}, Bratteli also fixed
a sequence of systems of matrix units for the AF algebra
to  be able to associate  the diagram. These are equivalent procedures.
\end{remark}
Next we quote a result of Bratteli with slight changes \cite{br72}, which is used
to justify Definition~\ref{deffu}, below, and in a number of places later.
Before that, let us fix the following notation which will be used frequently.
We need this to avoid restricting the results to just the unital case.
\begin{notation}
Throughout this note, for a C$^*$algebra $A$, we shall use two
(minimal) unitizations $A^{\sim}$ and $A^{+}$ as defined in \cite{wo93}. In fact, when $A$ is not unital,
both of them are equal and contain $A$ as
a maximal ideal of codimension one.
When $A$  is unital, $A^{\sim}=A$ but again $A^{+}$ contains $A$ as
a maximal ideal of codimension one. The units of $A^{\sim}$ and $A^{+}$
will both be denoted by 1.
\end{notation}
\begin{lemma}\label{lembr}
Let $A=\overline{\bigcup_{n\geq  1}A_{n}}$ be an AF algebra where
$(A_{n})_{n=1}^{\infty}$ is an increasing sequence of finite dimensional  C$^{*}$-subalgebras
of $A$. Let $B$ be a finite dimensional  C$^{*}$-subalgebra
of $A$. Then for each $\varepsilon >0$ there is a unitary $u\in A^{\sim}$
with $\Vert u-1\Vert<\varepsilon$ and a positive integer $n$ such that
$uBu^{*}\subseteq A_{n}$.
\end{lemma}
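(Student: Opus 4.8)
The plan is to reduce the statement to a standard perturbation result for systems of matrix units. We may assume $B\neq 0$. Since $B$ is finite dimensional, fix an isomorphism $B\cong \mathcal{M}_{d_{1}}\oplus\cdots\oplus\mathcal{M}_{d_{r}}$ together with the corresponding complete system of matrix units $\{e_{ij}^{(s)}\}$ of $B$, so that $(e_{ij}^{(s)})^{*}=e_{ji}^{(s)}$, the usual multiplication relations hold, and $p:=\sum_{s,i}e_{ii}^{(s)}=1_{B}$. Because $\bigcup_{n}A_{n}$ is dense in $A$, for any prescribed $\delta>0$ there is an $n$ and elements $a_{ij}^{(s)}\in A_{n}$ with $\|e_{ij}^{(s)}-a_{ij}^{(s)}\|<\delta$ for all indices. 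The strategy is to straighten the approximate matrix units $a_{ij}^{(s)}$ into an exact system $\{g_{ij}^{(s)}\}$ inside $A_{n}$, and then to conjugate the two systems by a unitary in $A^{\sim}$ close to $1$.

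First I would produce genuine matrix units in $A_{n}$ close to the $e_{ij}^{(s)}$. Replacing each $a_{ii}^{(s)}$ by its self-adjoint part and applying continuous functional calculus inside $A_{n}$ (for $\delta$ small the spectrum is concentrated near $\{0,1\}$) yields projections close to $e_{ii}^{(s)}$; a finite orthogonalisation makes these pairwise orthogonal within $A_{n}$ at the cost of a further $O(\delta)$ perturbation. Taking polar parts of the suitably compressed off-diagonal elements $a_{ij}^{(s)}$ then gives partial isometries producing an exact system $\{g_{ij}^{(s)}\}\subseteq A_{n}$, with $\|e_{ij}^{(s)}-g_{ij}^{(s)}\|$ bounded by a fixed multiple of $\delta$. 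Crucially, all of these operations stay inside $A_{n}$ because $A_{n}$ is a finite-dimensional $C^{*}$-algebra, hence closed under functional calculus and polar decomposition; in particular $n$ need not be enlarged. Let $B'$ be the subalgebra of $A_{n}$ generated by the $g_{ij}^{(s)}$, and $q:=\sum_{s,i}g_{ii}^{(s)}=1_{B'}$, so $B'\cong B$ and $B'\subseteq A_{n}$.

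It remains to join the two close systems of matrix units by a unitary near $1$. Working in $A^{\sim}$, form the intertwiner $v=\sum_{s,i}g_{i1}^{(s)}e_{1i}^{(s)}$. A direct computation gives $v\,e_{jk}^{(s)}=g_{jk}^{(s)}\,v$ for all indices, whence $v^{*}v$ commutes with $B$, and $v=qvp$ with $v^{*}v\approx p$ in $pA^{\sim}p$ and $vv^{*}\approx q$ in $qA^{\sim}q$; for $\delta$ small both are invertible in the respective corners. Then $u_{0}=v\,(v^{*}v)^{-1/2}$ (inverse computed in $pA^{\sim}p$) is a partial isometry with $u_{0}^{*}u_{0}=p$, $u_{0}u_{0}^{*}=q$, and, since $(v^{*}v)^{-1/2}$ commutes with $B$, with $u_{0}\,e_{jk}^{(s)}\,u_{0}^{*}=g_{jk}^{(s)}$. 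Because $p$ and $q$ are close projections, the complementary projections $1-p$ and $1-q$ are joined by the standard partial isometry $u_{1}=(1-q)(1-p)\bigl((1-p)(1-q)(1-p)\bigr)^{-1/2}$, and $u:=u_{0}+u_{1}$ is a unitary in $A^{\sim}$. Since $e_{jk}^{(s)}=p\,e_{jk}^{(s)}p$ annihilates $u_{1}$ on both sides, we get $u\,e_{jk}^{(s)}u^{*}=g_{jk}^{(s)}$, hence $uBu^{*}=B'\subseteq A_{n}$; and $u\approx p+(1-p)=1$ gives $\|u-1\|\le C_{B}\,\delta$, where $C_{B}$ depends only on $\dim B$, so choosing $\delta$ small enough at the outset yields $\|u-1\|<\varepsilon$.

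The main obstacle is the second step: manufacturing an exact system of matrix units from the approximate one while keeping every element inside $A_{n}$ and every error linear in $\delta$. The orthogonalisation of the diagonal projections and the polar decompositions of the off-diagonal terms must be carried out and estimated with enough care that the constant $C_{B}$ is controlled and the final unitary is genuinely within $\varepsilon$ of $1$; this routine but delicate bookkeeping is where the real work lies. The passage to $A^{\sim}$ in the last step is merely a device to accommodate the possibly non-unital case and the fact that $1_{B}$ need not be a unit for $A$.
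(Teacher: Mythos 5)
Your proof is correct, but it takes a genuinely different route from the paper's: the paper's entire proof is a citation --- in the unital case the statement is \cite[Lemma~2.3]{br72}, and the non-unital case is disposed of by adjoining a unit to both algebras (so that $A^{\sim}=\overline{\bigcup_{n}(A_{n}+\C 1)}$) and applying the same lemma. What you have done is reprove Bratteli's cited lemma from scratch: approximate the matrix units of $B$ in some $A_{n}$, straighten them to an exact system $\{g_{ij}^{(s)}\}\subseteq A_{n}$ (legitimately without enlarging $n$, since a finite-dimensional C$^{*}$-algebra is closed under functional calculus and polar decomposition), and conjugate by $u=u_{0}+u_{1}$ built from the intertwiner $v=\sum_{s,i} g_{i1}^{(s)}e_{1i}^{(s)}$. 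The computations you indicate all check out: $v e_{jk}^{(s)}=g_{jk}^{(s)}v$, hence $v^{*}v=\sum_{s,i}e_{i1}^{(s)}g_{11}^{(s)}e_{1i}^{(s)}$ commutes with every matrix unit of $B$ and is $O(\delta)$-close to $p$, so $u_{0}=v(v^{*}v)^{-1/2}$ implements $e_{jk}^{(s)}\mapsto g_{jk}^{(s)}$ with $u_{0}^{*}u_{0}=p$ and $u_{0}u_{0}^{*}=q$ (the latter because $vv^{*}$ is invertible in $qA^{\sim}q$); the cross terms $u_{0}^{*}u_{1}$ vanish since $u_{0}=qu_{0}p$ and $u_{1}=(1-q)u_{1}(1-p)$; and $u_{1}$ is annihilated on both sides by $e_{jk}^{(s)}=pe_{jk}^{(s)}p$, so $uBu^{*}=B'\subseteq A_{n}$. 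The straightening step you flag as the real work is indeed the standard perturbation argument (it is precisely the content of the lemmas Bratteli proves before his Lemma~2.3; compare also \cite[Lemma~1.8]{gl60} and Lemma~\ref{lempart} above), and since every error there is linear in $\delta$ with a constant depending only on $\dim B$, your final estimate $\Vert u-1\Vert\le C_{B}\delta<\varepsilon$ is sound. The trade-off: the paper's proof is two lines but defers all work to the literature and glosses the (easy) verification that unitization preserves the hypotheses, whereas yours is self-contained, absorbs the non-unital case from the outset by working in $A^{\sim}$ rather than via the unitization reduction, and makes explicit the quantitative closeness $\Vert u-1\Vert<\varepsilon$ --- which is exactly the feature of the lemma (not the mere existence of $u$) that Definition~\ref{deffu} and Proposition~\ref{profun} later depend on.
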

\begin{proof}
In the unital case, the lemma is essentially \cite[Lemma~2.3]{br72}. To deal with the non-unital case,
just add a unit to both algebras and use the same lemma.
\end{proof}
\begin{definition}\label{deffu}
Let $\mathcal{A}=(A, (A_{n})_{n=1}^{\infty},(\varphi_{n})_{n=1}^{\infty})$
be in the category  $\mathbf{AF}$, and let us define $\mathcal{B}(\mathcal{A})$ in $\mathbf{BD}$
as follows. Consider the given isomorphisms $\varphi_{n}: A_{n}\to A_{n}'$ and define
$\varphi_{n}': A_{n}'\to A_{n+1}'$ by $\varphi_{n}'=\varphi_{n+1}\varphi_{n}^{-1}$,
for each $n\geq 1$;
then the following diagram commutes, for each $n$:

\[
\xymatrix{
A_{n}\ar@{^{(}->}[r]\ar[d]_{\varphi_{n}} & A_{n+1} \ar[d]^{\varphi_{n+1}}\\
A_{n}'\ar[r]_{\varphi_{n}'} & A_{n+1}'\ .
 }
\]

\noindent Write  $A_{n}^{'}=\mathcal{M}_{m_{n1}}\oplus\cdots \oplus\mathcal{M}_{m_{nk_{n}}}$, and set
\begin{equation*}
V_{n}=\left(
 \begin{smallmatrix}
 m_{n1}\\
 \vdots\\
 m_{nk_{n}}
 \end{smallmatrix}
 \right).
\end{equation*}
\noindent
By Theorem~\ref{thrfd}, there is a unique embedding  matrix $E_{n}$ corresponding
to $\varphi_{n}'$; that is, $E_{n}=R_{\varphi_{n}'}$. Set
$\mathcal{B}(\mathcal{A})=((V_{n})_{n=1}^{\infty}, (E_{n})_{n=1}^{\infty})$.

Let $\varphi: \mathcal{A}_{1}\to \mathcal{A}_{2}$ be a morphism
in $\mathbf{AF}$ where
$\mathcal{A}_{1}=(A, (A_{n})_{n=1}^{\infty},(\varphi_{n})_{n=1}^{\infty})$ and
$\mathcal{A}_{2}=(B, (B_{n})_{n=1}^{\infty},(\psi_{n})_{n=1}^{\infty})$, in other words, a $*$-homomorphism from $A$ to $B$.
Define
$\mathcal{B}(\varphi): \mathcal{B}(\mathcal{A}_{1})\to \mathcal{B}(\mathcal{A}_{2})$
as follows.
There is
an $f_{1}\geq 1$ and  a unitary $u_{1}\in B^{\sim}$ such that
$u_{1}\varphi(A_{1})u_{1}^{*}\subseteq B_{f_{1}}$ and
$\Vert u_{1}-1\Vert <\frac{1}{2}$, by Lemma~\ref{lembr}.
Let $g_{1}: A_{1}\to B_{f_{1}}$ be such that
$g_{1}= (\mathrm{Ad}\,{u_{1}})\circ \varphi\upharpoonright_{A_{1}}$. Now
define  $\eta_{1}: A_{1}'\to B_{f_{1}}'$ by
$\eta_{1}=\psi_{f_{1}}g_{1}\varphi_{1}^{-1}$.
Denote by $F_{1}$
 the   multiplicity matrix corresponding to  $\eta_{1}$,
according to Theorem~\ref{thrfd}; that is, $F_{1}=R_{\eta_{1}}$.

Similarly, choose unitaries
$u_{2},u_{3},\ldots$ in $B^{\sim}$ and positive integers $f_{2},f_{3},\ldots$
with $f_{1}\leq f_{2}\leq\cdots$  such that $u_{n}\varphi(A_{n})u_{n}^{*}\subseteq B_{f_{n}}$,
$\Vert u_{n}-1\Vert <\frac{1}{2}$,
for each $n\geq 1$, and the sequence $ (f_{n})_{n=1}^{\infty}$ is cofinal in $\N$.
(The condition $\Vert u_{n}-1\Vert <\frac{1}{2}$ is important and will be used in the proof of
Proposition~\ref{profun}.)
Let $g_{n}: A_{n}\to B_{f_{n}}$ be such that
$g_{n}= (\mathrm{Ad}\,{u_{n}})\circ \varphi\upharpoonright_{A_{n}}$ and
define  $\eta_{n}: A_{n}'\to B_{f_{n}}'$ by
$\eta_{n}=\psi_{f_{n}}g_{n}\varphi_{n}^{-1}$;
set $R_{\eta_{n}}=F_{n}$.
We will show that $((F_{n})_{n=1}^{\infty}, (f_{n})_{n=1}^{\infty})$ is a premorphism
from $\mathcal{B}(\mathcal{A}_{1})$ to $\mathcal{B}(\mathcal{A}_{2})$
(see Proposition~\ref{profun}).
Denote by  $\mathcal{B}(\varphi)$  the equivalence class of the premorphism
$((F_{n})_{n=1}^{\infty}, (f_{n})_{n=1}^{\infty})$ (as in Definition~\ref{defeq}).
The following (a priori non-commutative) diagram  illustrates  the idea of this definition:

\[
\xymatrix{
A_{1}'\ar[r]^{\varphi_{1}'}\ar[d]_{\varphi_{1}^{-1}}
& A_{2}'\ar[r]^{\varphi_{2}'}\ar[d]_{\varphi_{2}^{-1}}
& A_{3}'\ar[r]^{\varphi_{3}'}\ar[d]_{\varphi_{3}^{-1}}
& \cdots\ \\
A_{1}\ar@{^{(}->}[r]\ar[d]_{(\mathrm{Ad}\,{u_{1}}) \varphi}
& A_{2}\ar@{^{(}->}[r] \ar[rd]_(.3){(\mathrm{Ad}\,{u_{2}}) \varphi}
& A_{3}\ar@{^{(}->}[r] \ar[rd]_(.3){(\mathrm{Ad}\,{u_{3}}) \varphi} & \cdots\ \\
B_{f_{1}}\ar@{^{(}->}[r]\ar[d]_{\psi_{1}}& B_{2}\ar@{^{(}->}[r] \ar[d]_{\psi_{2}}
& B_{f_{2}}\ar@{^{(}->}[r]\ar[d]_{\psi_{3}} & \cdots\ \\
B_{1}'\ar[r]_{\psi_{1}'}
& B_{2}'\ar[r]_{\psi_{2}' }
& B_{3}'\ar[r]_{\psi_{3}' }
& \cdots\ . }
\]
In an obvious way, the diagram associated to the zero algebra
in $\mathbf{AF}$
is the zero Bratteli diagram, and the morphism associated to the zero
homomorphism is the zero morphism.
\end{definition}
We shall need a number of lemmas to show that   $\mathcal{B}: \mathbf{AF} \to \mathbf{BD}$ is
well defined, i.e., that $\mathcal{B}(\varphi)$ is independent of the choice of
sequences  $ (f_{n})_{n=1}^{\infty}$ and  $ (u_{n})_{n=1}^{\infty}$.
The  first two lemmas are well known  in the special case of injective $\ast$-homomorphisms.
Before that, let us introduce an important notion (or perhaps just notation!)  that will be used frequently.
\begin{remark}
Let $\varphi: A\to B$ be a $\ast$-homomorphism  between
finite dimensional C$^*$-algebras. By the  multiplicity matrix of $\varphi$,
 $R_{\varphi}$, we mean that there have been implicitly fixed  two $*$-isomorphisms
 $\varphi_{1} :A\to A'$ and
 $\varphi_{2} :B\to B'$, where $A'$ and $B'$ are finite dimensional C$^*$-algebras
$\mathcal{M}_{n_{1}}\oplus\cdots \oplus\mathcal{M}_{n_{k}}$ and
 $\mathcal{M}_{m_{1}}\oplus\cdots \oplus\mathcal{M}_{m_{l}}$,
 respectively, and $R_{\varphi}$ is the  multiplicity matrix of
 $\varphi_{2}\varphi \varphi_{1}^{-1} : A'\to B'$, according to Theorem~\ref{thrfd}.
 \end{remark}
 \begin{lemma}\label{lempr}
 Let $\varphi: A\to B$ and $\psi : B\to C$ be $\ast$-homomorphisms between finite dimensional C$^{*}$-algebras. Then  $R_{\psi \varphi}=R_{\psi}R_{\varphi}$.
 \end{lemma}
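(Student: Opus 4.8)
The plan is to reduce to the case where $A$, $B$, and $C$ are literally direct sums of matrix algebras and then to recognize each multiplicity matrix as a table of projection ranks, at which point the identity becomes nothing more than a count combined with the associativity of matrix multiplication. First I would strip away the isomorphisms implicit in the definition of $R$. Writing $\varphi_{1}:A\to A'$, $\varphi_{2}:B\to B'$, $\varphi_{3}:C\to C'$ for the fixed isomorphisms onto direct sums of matrix algebras (and assuming, as the statement implicitly does, that the isomorphism chosen for $B$ is the same one used in forming $R_{\varphi}$ and $R_{\psi}$, and similarly for $A$ and $C$), I would use the identity
\[
\varphi_{3}(\psi\varphi)\varphi_{1}^{-1}=(\varphi_{3}\psi\varphi_{2}^{-1})(\varphi_{2}\varphi\varphi_{1}^{-1}).
\]
By the very definition of the multiplicity matrix this shows it is enough to treat the case $A=A'$, $B=B'$, $C=C'$ with $\varphi$ and $\psi$ the corresponding standard maps, so that the isomorphisms disappear.

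Next I would record the rank interpretation of the entries. Write $A=\bigoplus_{j}\mathcal{M}_{n_{j}}$, $B=\bigoplus_{i}\mathcal{M}_{m_{i}}$, $C=\bigoplus_{s}\mathcal{M}_{p_{s}}$, let $e_{j}$ be a minimal projection in the $j$-th summand of $A$, let $f_{i}$ be one in the $i$-th summand of $B$, and let $\pi_{i}$ and $\rho_{s}$ denote the projections of $B$ and $C$ onto their $i$-th and $s$-th summands. The normal form of Theorem~\ref{thrfd} exhibits $\pi_{i}(\varphi(e_{j}))$ as unitarily equivalent to a projection of rank exactly $(R_{\varphi})_{ij}$; since conjugation by a unitary preserves rank, $(R_{\varphi})_{ij}=\operatorname{rank}\pi_{i}(\varphi(e_{j}))$. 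The same description applies to $R_{\psi}$ and to $R_{\psi\varphi}$.

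Finally I would compute the composite. Summand by summand, the $i$-th component of the projection $\varphi(e_{j})\in B$ has rank $(R_{\varphi})_{ij}$; as two projections in a finite-dimensional C$^{*}$-algebra are Murray--von Neumann equivalent precisely when their ranks agree in each summand, $\varphi(e_{j})$ is equivalent inside $B$ to an orthogonal sum of $(R_{\varphi})_{ij}$ copies of $f_{i}$, summed over $i$. Applying the $\ast$-homomorphism $\psi$, which carries orthogonal sums of projections to orthogonal sums, and using that rank is additive over orthogonal sums, I obtain
\[
(R_{\psi\varphi})_{sj}=\operatorname{rank}\rho_{s}\psi(\varphi(e_{j}))=\sum_{i}(R_{\varphi})_{ij}\operatorname{rank}\rho_{s}\psi(f_{i})=\sum_{i}(R_{\psi})_{si}(R_{\varphi})_{ij}=(R_{\psi}R_{\varphi})_{sj}.
\]

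I do not expect a genuine obstacle here; the only care needed is the bookkeeping in the last display, namely justifying that $\varphi(e_{j})$ really is equivalent to the asserted orthogonal sum (and not merely of the right rank in each summand) and that consistent isomorphisms are used for $A$, $B$, $C$. Both points are handled cleanly by phrasing the count at the level of the induced maps $\mathrm{K}_{0}(A)\to\mathrm{K}_{0}(B)\to\mathrm{K}_{0}(C)$ on classes of minimal projections, for which the desired equality $R_{\psi\varphi}=R_{\psi}R_{\varphi}$ is exactly the functoriality $\mathrm{K}_{0}(\psi\varphi)=\mathrm{K}_{0}(\psi)\mathrm{K}_{0}(\varphi)$ written out in the standard bases.
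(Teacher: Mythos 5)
Your proof is correct, but it is a genuinely different (and more self-contained) route than the paper's. The paper disposes of the lemma in two lines: it cites \cite[Lemma~15.3.2]{lbr92} for the case of injective $*$-homomorphisms, where the identity is verified by chasing systems of matrix units, and then asserts that the general case follows ``using the matrix language and Theorem~\ref{thrfd}.'' You instead prove the statement directly and uniformly for arbitrary (possibly non-injective, non-unital) homomorphisms: after the correct reduction $\varphi_{3}(\psi\varphi)\varphi_{1}^{-1}=(\varphi_{3}\psi\varphi_{2}^{-1})(\varphi_{2}\varphi\varphi_{1}^{-1})$ --- and you are right that consistency of the middle isomorphism $\varphi_{2}$ is exactly what the implicit convention in the remark preceding the lemma provides --- you read off $(R_{\varphi})_{ij}=\operatorname{rank}\pi_{i}(\varphi(e_{j}))$ from the normal form of Theorem~\ref{thrfd}, and the multiplicativity becomes a rank count. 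The bookkeeping you flag is indeed the only delicate point, and your handling is sound: the ranks $(R_{\varphi})_{ij}$ are at most $m_{i}$, so $\varphi(e_{j})$ is Murray--von Neumann equivalent to an orthogonal sum of diagonal matrix-unit projections each equivalent to $f_{i}$, $*$-homomorphisms preserve both orthogonality and equivalence, and rank in each summand of $C$ is an equivalence invariant; alternatively, your closing observation that the multiplicity matrix is just $\mathrm{K}_{0}$ written in the bases of classes of minimal projections, so that the lemma is functoriality $\mathrm{K}_{0}(\psi\varphi)=\mathrm{K}_{0}(\psi)\mathrm{K}_{0}(\varphi)$, packages all of this cleanly and is perhaps the most conceptual formulation. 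What each approach buys: the paper's is shortest and leans on the literature (with the mild cost that the cited result covers only the injective case, leaving a small extension implicit); yours costs a page but is complete, avoids any case split, and makes visible the fact --- used implicitly throughout Section~6 of the paper --- that $R_{\varphi}$ \emph{is} $\mathrm{K}_{0}(\varphi)$ in canonical coordinates.
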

 \begin{proof}
 One can give a proof for the case of injective $\ast$-homomorphisms using
 the matrix units  \cite[Lemma~15.3.2]{lbr92}, and it is easy to conclude it for the general case using the matrix language and Theorem~\ref{thrfd}.
 \end{proof}
 \begin{remark}
 Let $V_{i}$ be a column matrix of non-zero positive integers for $i=1,2$ and let
 $E:V_{1}\to V_{2}$ be a  multiplicity matrix. Write $V_{1}^{T}=(n_{1} \cdots \ n_{k})$ and
 $V_{2}^{T}=(m_{1} \cdots \ m_{l})$. Let $C^{*}(V_{1})$ denote the
 C$^{*}$-algebra
 $\mathcal{M}_{n_{1}}\oplus\cdots \oplus\mathcal{M}_{n_{k}}$ and similarly for $C^{*}(V_{2})$.
Write $E=(a_{ij})$. Then there is a canonical  $\ast$-homomorphism
 $h(E): C^{*}(V_{1})\to C^{*}(V_{2})$ which is defined by

  \begin{equation*}
 \pi_{i}(h(E)(u_{1},\ldots, u_{k}))=
 \left(
\begin{smallmatrix}
  \resizebox{.9cm}{.4cm}{$u_{1}^{(a_{i1})}$} &  & & \resizebox{.2cm}{.22cm}{$0$}\ \ \ \ \\
  & \ddots &  & \\
  &  & \resizebox{.9cm}{.4cm}{$u_{k}^{(a_{ik})}$} &  \\
 \resizebox{.2cm}{.22cm}{$0$} & & & \resizebox{.67cm}{.31cm}{$0^{(s_{i})}$}
 \end{smallmatrix}
 \right) \
  \text{for}\  (u_{1},\ldots, u_{k})\in C^{*}(V_{1}),
 \end{equation*}

 \noindent where $s_{i}$ is such that
 $\sum_{j=1}^{k}a_{ij}n_{j}+ s_{i}= m_{i}$, for each $1\leq
i\leq l$.
 Recall that for a unital C$^{*}$-algebra $A$ and a unitary element $u$ in $A$, the
 $\ast$-isomorphism $\mathrm{Ad}\,{u}:A\to A$ is defined by
 $(\mathrm{Ad}\,{u})(a)=uau^{*}$  ($a\in A$).
 In particular, the conclusion of Theorem~\ref{thrfd}
 could be summarized as $\varphi=(\mathrm{Ad}\,{u}) h(E)$, where $u=v^{*}$.
 \end{remark}
We shall need the following lemma in a number of places later.
It is proved in \cite[Theorem I.11.9]{ta79}, and also is obvious in view of
 Theorem~\ref{thrfd} and the remark above.
 \begin{lemma}\label{lemun}
 Let $\varphi , \psi: A\to B$  be $\ast$-homomorphisms between finite dimensional C$^{*}$-algebras.
 Then $R_{\varphi}=R_{\psi}$ if and only if there is a unitary $u$ in $B$
 such that  $\varphi= (\mathrm{Ad}\,{u})\psi$.
\end{lemma}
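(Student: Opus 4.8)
The plan is to reduce the statement to the canonical representation $\varphi=(\mathrm{Ad}\,u)\,h(E)$ supplied by Theorem~\ref{thrfd} (in the form recorded in the remark preceding the lemma), together with the uniqueness of the multiplicity matrix. First I would unwind the definition of $R_{\varphi}$: both $R_{\varphi}$ and $R_{\psi}$ are taken with respect to one fixed pair of $*$-isomorphisms $\varphi_{1}:A\to A'$ and $\varphi_{2}:B\to B'$ onto standard-form algebras, so that $R_{\varphi}$ (resp.\ $R_{\psi}$) is the multiplicity matrix of $\tilde{\varphi}:=\varphi_{2}\varphi\varphi_{1}^{-1}$ (resp.\ $\tilde{\psi}:=\varphi_{2}\psi\varphi_{1}^{-1}$). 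Since $\varphi_{2}$ is a $*$-isomorphism, conjugation transports unitaries back and forth: for a unitary $w\in B'$ one has $\tilde{\varphi}=(\mathrm{Ad}\,w)\tilde{\psi}$ exactly when $\varphi=(\mathrm{Ad}\,\varphi_{2}^{-1}(w))\psi$, and $\varphi_{2}^{-1}(w)$ is a unitary of $B$ (with every unitary of $B$ arising this way). Thus it is enough to treat $\tilde{\varphi},\tilde{\psi}$, i.e.\ to assume $A,B$ are already in standard form.

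For the forward direction I would assume $R_{\varphi}=R_{\psi}=:E$ and apply the existence part of Theorem~\ref{thrfd} to each map, writing $\varphi=(\mathrm{Ad}\,u_{1})h(E)$ and $\psi=(\mathrm{Ad}\,u_{2})h(E)$ for unitaries $u_{1},u_{2}\in B$; then $\varphi=(\mathrm{Ad}\,(u_{1}u_{2}^{*}))\psi$, so $u=u_{1}u_{2}^{*}$ does the job. For the converse I would start from $\varphi=(\mathrm{Ad}\,u)\psi$, substitute $\psi=(\mathrm{Ad}\,u_{2})h(R_{\psi})$, and read off $\varphi=(\mathrm{Ad}\,(uu_{2}))h(R_{\psi})$; this displays $R_{\psi}$ as a multiplicity matrix for $\varphi$ (with conjugating unitary $v=(uu_{2})^{*}$), and the uniqueness clause of Theorem~\ref{thrfd} then forces $R_{\varphi}=R_{\psi}$.

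The only point requiring care is the converse, where it is essential to invoke the \emph{uniqueness} of the multiplicity matrix and not merely its existence; everything else is the bookkeeping needed to keep the fixed identifications $\varphi_{1},\varphi_{2}$ in place and to ensure the conjugating unitary is produced in $B$ rather than in $B'$. I expect no real obstacle beyond this, which is why the remark accompanying the lemma can call the result ``obvious in view of Theorem~\ref{thrfd}''.
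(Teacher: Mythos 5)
Your proposal is correct and follows exactly the route the paper indicates: the paper's ``proof'' is merely the citation of \cite[Theorem I.11.9]{ta79} together with the observation that the lemma is obvious from Theorem~\ref{thrfd} and the remark expressing $\varphi=(\mathrm{Ad}\,u)h(E)$, and your argument---reducing to standard form via the fixed identifications, using existence to write $\varphi=(\mathrm{Ad}\,u_{1})h(E)$, $\psi=(\mathrm{Ad}\,u_{2})h(E)$ for the forward direction, and invoking the uniqueness clause of Theorem~\ref{thrfd} for the converse---is precisely the fleshed-out version of that sketch. No gaps: the transport of unitaries through $\varphi_{2}$ and the emphasis on uniqueness in the converse are exactly the points that make the ``obvious'' claim rigorous.
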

The following corollary (used, if not explicitly stated, in \cite{br72}) is given in the case of
injective $*$-homomorphisms in \cite[Lemma~15.3.2]{lbr92}.
\begin{corollary}\label{corcom}
Let $V_{i}$ be a column matrix of non-zero positive integers for $1\leq i\leq 4$. Let $E_{1}:V_{1}\to V_{2}$,
$E_{2}: V_{3}\to V_{4}$, $E_{3}:V_{1}\to V_{3}$, and $E_{4}: V_{2}\to V_{4}$ be
multiplicity matrices such that the diagram

\[
\xymatrix{
V_{1}\ar[r]^{E_{1}}\ar[d]_{E_{3}} & V_{2} \ar[d]^{E_{4}}\\
V_{3}\ar[r]_{E_{2}} & V_{4}
 }
\]

\noindent commutes; that is,
$E_{4}E_{1}=E_{2}E_{3}$.
Let $\varphi_{1}:C^{*}(V_{1})\to C^{*}(V_{2})$,
$\varphi_{2}: C^{*}(V_{3})\to C^{*}(V_{4})$, $\varphi_{3}:C^{*}(V_{1})\to C^{*}(V_{3})$, and $\varphi_{4}: C^{*}(V_{2})\to C^{*}(V_{4})$ be $*$-homomorphisms such that $R_{\varphi_{i}}=E_{i}$ for
$1\leq i\leq 4$. Then there is a unitary $u\in C^{*}(V_{4})$ such that the following
diagram commutes:

\[
\xymatrix{
C^{*}(V_{1})\ar[r]^{\varphi_{1}}\ar[d]_{\varphi_{3}}
& C^{*}(V_{2}) \ar[d]^{(\mathrm{Ad}\,{u})\varphi_{4}}\\
C^{*}(V_{3})\ar[r]_{\varphi_{2}} & C^{*}(V_{4})\ ;
 }
\]
i.e., $(\mathrm{Ad}\,{u})\varphi_{4}\varphi_{1}=\varphi_{2}\varphi_{3}$.
\end{corollary}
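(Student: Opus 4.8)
The plan is to reduce the statement about a commuting square of $*$-homomorphisms to the already-established commuting square of multiplicity matrices, using Lemma~\ref{lempr} (multiplicativity of $R$) and Lemma~\ref{lemun} (two $*$-homomorphisms with equal multiplicity matrices differ by an inner automorphism). The whole point is that the hypothesis $E_{4}E_{1}=E_{2}E_{3}$ says exactly that the two composite maps $\varphi_{4}\varphi_{1}$ and $\varphi_{2}\varphi_{3}$ from $C^{*}(V_{1})$ to $C^{*}(V_{4})$ have the \emph{same} multiplicity matrix, and then Lemma~\ref{lemun} immediately produces the required unitary $u$.

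First I would compute the multiplicity matrices of the two composites. By Lemma~\ref{lempr} we have
\begin{equation*}
R_{\varphi_{4}\varphi_{1}}=R_{\varphi_{4}}R_{\varphi_{1}}=E_{4}E_{1}
\quad\text{and}\quad
R_{\varphi_{2}\varphi_{3}}=R_{\varphi_{2}}R_{\varphi_{3}}=E_{2}E_{3},
\end{equation*}
using the given identities $R_{\varphi_{i}}=E_{i}$ for $1\leq i\leq 4$. By the commutativity hypothesis, $E_{4}E_{1}=E_{2}E_{3}$, and hence
\begin{equation*}
R_{\varphi_{4}\varphi_{1}}=R_{\varphi_{2}\varphi_{3}}.
\end{equation*}

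Now both $\varphi_{4}\varphi_{1}$ and $\varphi_{2}\varphi_{3}$ are $*$-homomorphisms from the finite dimensional C$^{*}$-algebra $C^{*}(V_{1})$ to the finite dimensional C$^{*}$-algebra $C^{*}(V_{4})$ with the same multiplicity matrix. Applying Lemma~\ref{lemun} with $\varphi=\varphi_{2}\varphi_{3}$ and $\psi=\varphi_{4}\varphi_{1}$, I obtain a unitary $u\in C^{*}(V_{4})$ such that $\varphi_{2}\varphi_{3}=(\mathrm{Ad}\,{u})\varphi_{4}\varphi_{1}$, which is precisely the asserted commutativity $(\mathrm{Ad}\,{u})\varphi_{4}\varphi_{1}=\varphi_{2}\varphi_{3}$, and the diagram commutes as required.

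There is no real obstacle here, since the two lemmas do all the work; the only point requiring a word of care is bookkeeping about \emph{which} implicit $*$-isomorphisms $C^{*}(V_{i})\to C^{*}(V_{i})'$ are used when we speak of $R_{\varphi_{4}\varphi_{1}}$ and $R_{\varphi_{2}\varphi_{3}}$. Since each $C^{*}(V_{i})$ is already presented in the canonical block form $\mathcal{M}_{\,\cdot}\oplus\cdots\oplus\mathcal{M}_{\,\cdot}$, I would simply take all these implicit isomorphisms to be the identity, so that $R$ is genuinely multiplicative on the nose (Lemma~\ref{lempr}) and the two applications of $R$ to the composites are computed with respect to the same identifications of source $C^{*}(V_{1})$ and target $C^{*}(V_{4})$. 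With that convention fixed, the displayed equalities above are literal matrix identities and the argument closes.
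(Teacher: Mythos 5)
Your proposal is correct and follows essentially the same route as the paper's own proof: apply Lemma~\ref{lempr} to get $R_{\varphi_{4}\varphi_{1}}=E_{4}E_{1}=E_{2}E_{3}=R_{\varphi_{2}\varphi_{3}}$, then invoke Lemma~\ref{lemun} to produce the unitary $u$. Your closing remark fixing the implicit identifications so that $R$ is multiplicative on the nose is a reasonable extra precaution, consistent with the conventions already set in the remarks surrounding Lemma~\ref{lempr}.
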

\begin{proof}
By Lemma~\ref{lempr} we have
$R_{\varphi_{4}\varphi_{1}}=E_{4}E_{1}=E_{2}E_{3}=R_{\varphi_{2}\varphi_{3}}$.
Thus by Lemma~\ref{lemun}, there is a unitary $u\in C^{*}(V_{4})$ such that
$(\mathrm{Ad}\,{u})\varphi_{4}\varphi_{1}=\varphi_{2}\varphi_{3}$.
\end{proof}
Next we give a slight modification of \cite[Lemma~2.4]{br72}.
Since our $*$-homomorphisms are not assumed to be unital, we prove
a non-unital version
which is suitable for our purposes.
This lemma  gives a criterion to check whether
two $*$-homomorphisms between finite dimensional
C$^{*}$-algebras have the same  multiplicity matrices.
We will use it to show that the functor
$\mathcal{B}: \mathbf{AF} \to \mathbf{BD}$ is well defined
(in the proof of Proposition~\ref{profun}) and in a number of places later.
\begin{lemma}\label{lempart}
Let $\varphi,\psi: A\to B$ be  $*$-homomorphisms between finite dimensional
C$^{*}$-algebras such that $\Vert \varphi - \psi \Vert <1$. Then there is a
unitary $u$ in $B$ such that $\varphi=(\mathrm{Ad}\,{u})\psi$ and hence
$R_{\varphi}=R_{\psi}$.
\end{lemma}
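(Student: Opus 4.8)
The plan is to reduce the statement to the equality of multiplicity matrices and then invoke Lemma~\ref{lemun}. Precisely, I would first prove that $R_{\varphi}=R_{\psi}$; once this is in hand, Lemma~\ref{lemun} immediately produces a unitary $u\in B$ with $\varphi=(\mathrm{Ad}\,u)\psi$, and the concluding clause ``hence $R_{\varphi}=R_{\psi}$'' is just a restatement of what was established. After composing with the fixed $*$-isomorphisms $A\cong A'$ and $B\cong B'$ (which are isometric, hence change neither $\Vert\varphi-\psi\Vert$ nor the multiplicity matrices), there is no loss of generality in assuming $A=\mathcal{M}_{n_{1}}\oplus\cdots\oplus\mathcal{M}_{n_{k}}$ and $B=\mathcal{M}_{m_{1}}\oplus\cdots\oplus\mathcal{M}_{m_{l}}$ are literal direct sums of matrix algebras, with $B$ unital.

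The core of the argument is a rank computation. For each $j$, let $e_{j}$ be a minimal projection in the $j$-th summand $\mathcal{M}_{n_{j}}$ of $A$. Reading off the multiplicity matrix from the canonical form of Theorem~\ref{thrfd} (via the remark defining $h(E)$, together with $\varphi=(\mathrm{Ad}\,v)h(R_{\varphi})$), the rank of the $i$-th component $\pi_{i}(\varphi(e_{j}))$ equals the $(i,j)$ entry of $R_{\varphi}$, and likewise for $\psi$. So it suffices to show that $\varphi(e_{j})$ and $\psi(e_{j})$ have the same rank in every summand $\mathcal{M}_{m_{i}}$ of $B$. Since $\Vert e_{j}\Vert=1$, the hypothesis gives $\Vert\varphi(e_{j})-\psi(e_{j})\Vert\le\Vert\varphi-\psi\Vert<1$, so these are two projections in the unital C$^*$-algebra $B$ at distance less than $1$.

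At this point I would apply the standard fact that two projections $p,q$ in a unital C$^*$-algebra with $\Vert p-q\Vert<1$ are unitarily equivalent: the element $z=qp+(1-q)(1-p)$ satisfies $zp=qz$ and $z^{*}z=1-(p-q)^{2}$, which is invertible because $\Vert(p-q)^{2}\Vert=\Vert p-q\Vert^{2}<1$; since $(p-q)^{2}$ commutes with $p$, the unitary $w=z(z^{*}z)^{-1/2}$ satisfies $wpw^{*}=q$. Applying this with $p=\varphi(e_{j})$ and $q=\psi(e_{j})$ shows these projections are unitarily equivalent in $B$, hence have equal rank in each summand (rank being a unitary invariant). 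Thus the $j$-th columns of $R_{\varphi}$ and $R_{\psi}$ agree; letting $j$ range over all summands of $A$ yields $R_{\varphi}=R_{\psi}$, and Lemma~\ref{lemun} completes the proof.

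The main obstacle is bookkeeping rather than conceptual. One must verify carefully that the multiplicity matrix is genuinely computed by the summand-wise ranks of images of minimal projections (this is exactly where the description of $h(E)$ is used), and one must handle the non-injective case, where some $\varphi(e_{j})$ vanish: this is automatic, since a projection of norm less than $1$ is zero, so $\varphi(e_{j})=0$ forces $\psi(e_{j})=0$. Note that no single unitary conjugating all of $\varphi$ to $\psi$ is built by hand here; that unitary is supplied abstractly by Lemma~\ref{lemun} once the matrices are shown to coincide.
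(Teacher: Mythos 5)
Your proof is correct, but it follows a genuinely different route from the paper's. The paper constructs the conjugating unitary by hand: it fixes a full system of matrix units $\{e_{ij}^{l}\}$ for $A$, applies Glimm's lemma (\cite[Lemma~1.8]{gl60}) to the close projections $\varphi(e_{11}^{l})$ and $\psi(e_{11}^{l})$ to get partial isometries $w_{l}$, assembles the intertwiner $w=\sum_{l}\sum_{i}\varphi(e_{i1}^{l})\,w_{l}\,\psi(e_{1i}^{l})$ satisfying $w\psi(\cdot)w^{*}=\varphi(\cdot)$, and then---the only delicate non-unital step---extends the partial isometry $w$ (from $\psi(1)$ to $\varphi(1)$) to a unitary $u=w+v$, checking that the correction $v$ annihilates the range of $\psi$; the equality $R_{\varphi}=R_{\psi}$ is then a corollary. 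You invert this logic: you prove the numerical statement $R_{\varphi}=R_{\psi}$ first, using that the $(i,j)$ entry of the multiplicity matrix is the rank of $\pi_{i}(\varphi(e_{j}))$ for a minimal projection $e_{j}$ (which is indeed immediate from the canonical form $\varphi=(\mathrm{Ad}\,v)h(R_{\varphi})$, conjugation preserving summand-wise ranks), together with the standard fact that projections at distance less than $1$ are unitarily equivalent---your verification via $z=qp+(1-q)(1-p)$, $z^{*}z=1-(p-q)^{2}$, and the commutation of $(p-q)^{2}$ with $p$ is correct---and only then obtain the unitary $u$ abstractly from Lemma~\ref{lemun}. This is legitimate and non-circular, since Lemma~\ref{lemun} precedes the present lemma and is proved independently (via Takesaki or Theorem~\ref{thrfd}). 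What each approach buys: yours is more modular and avoids the non-unital bookkeeping entirely, since extending a partial isometry to a unitary is absorbed into Lemma~\ref{lemun}, and it needs only the images of one minimal projection per summand rather than a coherent family of intertwiners; the paper's argument is self-contained (it never invokes Lemma~\ref{lemun}), produces $u$ explicitly, and rehearses the Glimm--Bratteli matrix-unit intertwining pattern that the paper reuses later (e.g., in the proof of Lemma~\ref{lemeqk}). Your remark on the degenerate case ($\varphi(e_{j})=0$ forcing $\psi(e_{j})=0$, a projection of norm less than $1$ being zero) is correct, though it is already subsumed by the general projection argument.
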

\begin{proof}
Let $\{e_{ij}^{l} : 1\leq l\leq k, \ 1\leq i,j\leq n_{l}\}$ be a set of matrix units for $A$. Set
$p_{ij}^{l}=\varphi(e_{ij}^{l})$ and $q_{ij}^{l}=\psi(e_{ij}^{l})$, for each
$1\leq l\leq k$ and $1\leq i,j\leq n_{l}$.
Fix $1\leq l\leq k$. Since $\Vert p_{11}^{l} - q_{11}^{l} \Vert <1$, by \cite[Lemma~1.8]{gl60}
there is a partial isometry
$w_{l}\in B$ such that $p_{11}^{l}=w_{l}w_{l}^{*}$ and $q_{11}^{l}=w_{l}^{*}w_{l}$. Then
$w_{l}q_{11}^{l}w_{l}^{*}=p_{11}$. Set
\[
\sum_{l=1}^{k}\sum_{i=1}^{n_{l}}p_{i1}^{l}w_{l}q_{1i}^{l}=w.
\]
Thus we have $wq_{ij}^{l}w^{*}=p_{ij}^{l}$, for the above values of $i,j,l$. Also we have $ww^{*}=\varphi(1)$ and $w^{*}w=\psi(1)$, and so $w$ is a partial isometry from $\psi(1)$ to $\varphi(1)$.
If $\varphi$ and $\psi$ are unital, the proof is complete at this point.
Since $B$ is finite dimensional, there is a partial isometry $v\in B$ such that $w+v$ is  unitary and
$wv^{*}=w^{*}v=0$; set $w+v=u$. We have
$vq_{ij}^{l}=v\psi(1)q_{ij}^{l}= vw^{*}wq_{ij}^{l}=0$, and similarly $q_{ij}^{l}v^{*}=0$,
for the above values of $i,j,l$. Therefore, $uq_{ij}^{l}u^{*}=wq_{ij}^{l}w^{*}=p_{ij}^{l}$,
and so $\varphi=(\mathrm{Ad}\,{u})\psi$.
\end{proof}
\begin{proposition}\label{profun}
$\mathcal{B}: \mathbf{AF} \to \mathbf{BD}$ is a functor.
\end{proposition}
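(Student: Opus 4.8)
**The plan is to verify the two functor axioms—that $\mathcal{B}$ preserves composition and identities—after first dispensing with the prerequisite that $\mathcal{B}(\varphi)$ is well defined, i.e., independent of the choices of $(f_n)$ and $(u_n)$.**

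The well-definedness step should come first, since the definition of $\mathcal{B}(\varphi)$ involved arbitrary choices. Suppose we have two systems of choices $(f_n, u_n)$ and $(f_n', u_n')$ producing premorphisms $((F_n),(f_n))$ and $((F_n'),(f_n'))$. I would show these are equivalent in the sense of Definition~\ref{defeq2}. Fix $n$; the maps $g_n=(\mathrm{Ad}\,u_n)\circ\varphi\!\upharpoonright_{A_n}$ and $g_n'=(\mathrm{Ad}\,u_n')\circ\varphi\!\upharpoonright_{A_n}$ both land in some $B_{f_m}$ (for $m$ large, after composing with inclusions), and the key observation is that $\Vert u_n-1\Vert<\tfrac12$ and $\Vert u_n'-1\Vert<\tfrac12$ force $\Vert g_n - g_n'\Vert<1$ after including into a common $B_{f_m}$ (this is exactly why the $\tfrac12$ bound was flagged as important in Definition~\ref{deffu}). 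Then Lemma~\ref{lempart} gives $R_{g_n}=R_{g_n'}$ at that finite stage, which via Lemma~\ref{lempr} translates into the equality $S_{f_n m}F_n = S_{f_n' m}F_n'$ of multiplicity matrices required by Definition~\ref{defeq2}. So the two premorphisms represent the same morphism.

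Next, for preservation of composition, let $\varphi:\mathcal{A}_1\to\mathcal{A}_2$ and $\psi:\mathcal{A}_2\to\mathcal{A}_3$ be morphisms in $\mathbf{AF}$. Having just proved well-definedness, I am free to choose convenient unitaries and indices when computing $\mathcal{B}(\psi\varphi)$. The natural strategy is to pick the straightening data for $\psi\varphi$ to be compatible with the data for $\varphi$ followed by the data for $\psi$: choose unitaries and levels realizing $\varphi$ into $\mathcal{A}_2$, then apply $\psi$'s straightening on top. At the finite-dimensional level the composite homomorphism $A_n\to C_{h_n}$ (into the third algebra) equals, up to an inner automorphism, the composite of the two straightened maps. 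Invoking Lemma~\ref{lempr} ($R_{\psi\varphi}=R_\psi R_\varphi$) at each level then shows the multiplicity matrices multiply as $H_n=G_{f_n}F_n$ with indices $h_n=g_{f_n}$—precisely the composition formula of Definition~\ref{defpre}. Since both $\mathcal{B}(\psi\varphi)$ and $\mathcal{B}(\psi)\mathcal{B}(\varphi)$ are independent of choices, the resulting premorphisms represent the same morphism, giving $\mathcal{B}(\psi\varphi)=\mathcal{B}(\psi)\mathcal{B}(\varphi)$.

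Finally, for the identity, $\mathcal{B}(\mathrm{id}_A)$ is computed by straightening the identity map $A\to A$: one may take $u_n=1$ and $f_n=n$, so that each $\eta_n=\varphi_n\varphi_n^{-1}=\mathrm{id}_{A_n'}$ and hence $F_n=I$, yielding exactly $\mathrm{id}_{\mathcal{B}(\mathcal{A})}$ from Proposition~\ref{propre}. \textbf{The main obstacle I expect is the well-definedness step}, specifically the bookkeeping that turns the two norm estimates $\Vert u_n-1\Vert<\tfrac12$, $\Vert u_n'-1\Vert<\tfrac12$ into a genuine inequality $\Vert g_n-g_n'\Vert<1$ between maps landing in a single finite-dimensional algebra—this requires carefully aligning the two index sequences at a common level and is the place where Lemma~\ref{lempart} does its real work; once that estimate is in hand, the composition and identity axioms follow by routine matrix bookkeeping via Lemma~\ref{lempr}.
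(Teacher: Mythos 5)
Your skeleton is the same as the paper's---well-definedness via the $\Vert u_{n}-1\Vert<\frac{1}{2}$ control and the perturbation Lemma~\ref{lempart}, equivalence checked in the form of Definition~\ref{defeq2} via Proposition~\ref{proeqpre}, multiplicativity of multiplicity matrices (Lemma~\ref{lempr}) for composition, and the same trivial identity computation---but two concrete steps are missing. First, before you can compare two systems of choices, you must verify that a single system $((F_{n})_{n=1}^{\infty},(f_{n})_{n=1}^{\infty})$ is a premorphism at all, i.e., that $F_{n+1}E_{n}=S_{f_{n}f_{n+1}}F_{n}$ as required by Definition~\ref{defpre}; the equivalence relations of Definitions~\ref{defeq} and~\ref{defeq2} are only defined between premorphisms, so choice-independence presupposes this and does not imply it. The paper proves it first, by applying Lemma~\ref{lempart} to $g_{n}$ and $g_{n+1}\upharpoonright_{A_{n}}$; your proposal omits it entirely.

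Second, your composition step has a genuine hole. You propose to take as straightening data for $\psi\varphi$ the composite of the data for $\varphi$ and for $\psi$, and then appeal to well-definedness. But $(\mathrm{Ad}\,{v_{f_{n}}})\psi\circ(\mathrm{Ad}\,{u_{n}})\varphi=(\mathrm{Ad}\,{v_{f_{n}}\psi^{\sim}(u_{n})})\,\psi\varphi$, where $\psi^{\sim}$ is the unital extension, and if $u_{n}$ and $v_{f_{n}}$ carry only the default bound $\Vert\cdot-1\Vert<\frac{1}{2}$ of Definition~\ref{deffu}, the composite unitary satisfies only $\Vert v_{f_{n}}\psi^{\sim}(u_{n})-1\Vert<1$: the composed data is then \emph{not} an admissible choice in the sense of Definition~\ref{deffu}, so well-definedness---which quantifies only over admissible choices---cannot be invoked for it. This is exactly why the paper chooses the unitaries for $\varphi$, $\psi$, and $\psi\varphi$ with the tightened bounds $\frac{1}{4}$, $\frac{1}{8}$, and $\frac{1}{8}$, observes that $\frac{1}{4}+\frac{1}{8}<\frac{1}{2}$ makes the composed data admissible, and in addition verifies by a norm estimate and Lemma~\ref{lempart} that the composed premorphism is equivalent to one built from independently chosen data $(w_{n})$ for $\psi\varphi$. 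The repair is routine---build the tightened constants into your ``convenient choices''---but as stated the inference ``both sides are independent of choices, hence equal'' does not go through. A smaller remark in the same vein: your claim that $\Vert u_{n}-1\Vert<\frac{1}{2}$ and $\Vert u_{n}'-1\Vert<\frac{1}{2}$ force $\Vert g_{n}-g_{n}'\Vert<1$ reproduces the paper's own estimate $2\Vert u_{n}-v_{n}\Vert<1$, which as written requires $\Vert u_{n}-v_{n}\Vert<\frac{1}{2}$ rather than the $<1$ that the triangle inequality actually yields; both your argument and the paper's are safest with the bound $\frac{1}{4}$ throughout.
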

\begin{proof}
First let us  show that $\mathcal{B}$ is well defined. Following the notation of
Definition~\ref{deffu}, we need to show first that
$((F_{n})_{n=1}^{\infty}, (f_{n})_{n=1}^{\infty})$
is a premorphism from $\mathcal{B}(\mathcal{A}_{1})=(V,E)$ to
$\mathcal{B}(\mathcal{A}_{2})=(W,S)$. Fix $n\geq 1$. Consider the following
(a priori non-commutative) diagram:

\[
\xymatrix{
A_{n}\ar@{^{(}->}[r]\ar[d]_{g_{n}} & A_{n+1} \ar[d]^{g_{n+1}}\\
B_{f_{n}}\ar@{^{(}->}[r]
& B_{f_{n+1}}\ .
}
\]

\noindent
Since
$\Vert u_{n}-1\Vert <\frac{1}{2}$,    we have
$\Vert g_{n} - g_{n+1}\Vert_{A_{n}}
\leq\Vert (\mathrm{Ad}\,{u_{n}}) \varphi - (\mathrm{Ad}\,{u_{n+1}}) \varphi\Vert
\leq 2\Vert u_{n}-u_{n+1}\Vert<1$, for each $n\geq 1$.
Applying Lemma~\ref{lempart} for
$g_{n}:A_{n}\to B_{f_{n+1}}$ and
$g_{n+1}\upharpoonright_{A_{n}}:A_{n}\to B_{f_{n+1}}$,
 we get
$
F_{n+1}E_{n}=S_{f_{n}f_{n+1}}F_{n}
$.
This shows that $((F_{n})_{n=1}^{\infty}, (f_{n})_{n=1}^{\infty})$
is a premorphism.

To check that the morphism $\mathcal{B}(\varphi)$ is well defined
we also need to show that it is independent of the choice of $u_{n}$'s and
$f_{n}$'s. Therefore,
 let $(v_{n})_{n=1}^{\infty}$ be
another sequence of unitaries  in $B^{\sim}$ and
$(h_{n})_{n=1}^{\infty}$  an increasing cofinal sequence of positive integers
such that
$v_{n}\varphi(A_{n})v_{n}^{*}\subseteq B_{h_{n}}$ and $\Vert v_{n}-1\Vert <\frac{1}{2}$,
for each $n\geq 1$. Let $k_{n}: A_{n}\to B_{h_{n}}$ be such that
$k_{n}= (\mathrm{Ad}\,{v_{n}})\circ \varphi\upharpoonright_{A_{n}}$ and set $H_{n}=R_{k_{n}}$, the
multiplicity matrix of $k_{n}$.
We have to show that the two premorphisms
$((F_{n})_{n=1}^{\infty},(f_{n})_{n=1}^{\infty})$ and
$((H_{n})_{n=1}^{\infty},(h_{n})_{n=1}^{\infty})$ are equivalent.

Fix $n\geq 1$. We may assume, without loss of generality, that $f_{n}\leq h_{n}$. Then we
have the following (a priori non-commutative) diagram:

\[
\xymatrix{A_{n}\ar[d]_{g_{n}}\ar[rd]^{k_{n}}
 &  \\
 B_{f_{n}}\ar@{^{(}->}[r]
 & B_{h_{n}}\ .
 }
\]

\noindent We have
$\Vert g_{n} - k_{n}\Vert
\leq\Vert (\mathrm{Ad}\,{u_{n}})\varphi - (\mathrm{Ad}\,{v_{n}}) \varphi\Vert
\leq 2\Vert u_{n}-v_{n}\Vert<1$.
Applying Lemma~\ref{lempart} for
$g_{n}:A_{n}\to B_{h_{n}}$ and
$h_{n}:A_{n}\to B_{h_{n}}$,
 we conclude that the following diagram is commutative:

\[
\xymatrix{V_{n}\ar[d]_{F_{n}}\ar[rd]^{H_{n}}
 &  \\
 W_{f_{n}}\ar[r]_{S_{f_{n}h_{n}}}
 &W_{h_{n}}\ .
 }
\]

\noindent Therefore, by Proposition~\ref{proeqpre}, the premorphisms $((F_{n})_{n=1}^{\infty},(f_{n})_{n=1}^{\infty})$ and
$((H_{n})_{n=1}^{\infty},(h_{n})_{n=1}^{\infty})$ are equivalent.
This completes the proof that the map $\mathcal{B}$ is well defined.

Suppose that $\varphi: \mathcal{A}_{1}\to \mathcal{A}_{2}$ and
$\psi:\mathcal{A}_{2}\to \mathcal{A}_{3}$ are morphisms in $\mathbf{AF}$. Let us show
that $\mathcal{B}(\psi\varphi)=\mathcal{B}(\psi)\mathcal{B}(\varphi)$.
Write
$\mathcal{A}_{1}=(A, (A_{n})_{n=1}^{\infty},(\varphi_{n})_{n=1}^{\infty})$,
$\mathcal{A}_{2}=(B, (B_{n})_{n=1}^{\infty},(\psi_{n})_{n=1}^{\infty})$,
$\mathcal{A}_{3}=(C, (C_{n})_{n=1}^{\infty},(\eta_{n})_{n=1}^{\infty})$,
$\mathcal{B}(\mathcal{A}_{1})=(V,E)$,
$\mathcal{B}(\mathcal{A}_{2})=(W,S)$, and
$\mathcal{B}(\mathcal{A}_{3})=(Z,T)$. Thus, $\varphi$ and $\psi$ are $*$-homomorphisms from $A$ to $B$ and from $B$ to $C$, respectively.
Choose a sequence of unitaries
$(u_{n})_{n=1}^{\infty}$ in $B^{\sim}$ with
$\Vert u_{n}-1\Vert <\frac{1}{4}$, $n\in \N$,
and a sequence of positive integers
$(f_{n})_{n=1}^{\infty}$ which construct
a  premorphism
$((F_{n})_{n=1}^{\infty}, (f_{n})_{n=1}^{\infty})$ for
$\mathcal{B}(\varphi)$, according to Definition~\ref{deffu}. Similarly,
choose
$(v_{n})_{n=1}^{\infty}$ with
$\Vert v_{n}-1\Vert <\frac{1}{8}$, $n\in \N$,
and $(g_{n})_{n=1}^{\infty}$ which give
a premorphism
$((G_{n})_{n=1}^{\infty}, (g_{n})_{n=1}^{\infty})$ for
$\mathcal{B}(\psi)$
and choose
$(w_{n})_{n=1}^{\infty}$
with
$\Vert w_{n}-1\Vert <\frac{1}{8}$, $n\in \N$,
and $(h_{n})_{n=1}^{\infty}$ which give
a premorphism
$((H_{n})_{n=1}^{\infty}, (h_{n})_{n=1}^{\infty})$ for
$\mathcal{B}(\psi\varphi)$, according to Definition~\ref{deffu}.

Fix a positive integer $n$. Then $(\mathrm{Ad}\,{u_{n}})\varphi(A_{n})\subseteq B_{f_{n}}$,
$(\mathrm{Ad}\,{v_{f_{n}}})\psi(B_{f_{n}})\subseteq C_{g_{f_{n}}}$, and
$(\mathrm{Ad}\,{w_{n}})\psi\varphi(A_{n})\subseteq C_{h_{n}}$.
We may  assume,
without loss of generality, that $g_{f_{n}}\leq h_{n}$. Then we have the following
(a priori non-commutative) diagram:

\[
\xymatrix{A_{n}\ar[d]_{(\mathrm{Ad}\,{u_{n}})\varphi}\ar[rdd]^{(\mathrm{Ad}\,{w_{n}})\psi\varphi}
 &  \\
 B_{f_{n}}\ar[d]_{(\mathrm{Ad}\,{v_{f_{n}}})\psi}
 & \\
 C_{g_{f_{n}}} \ar@{^{(}->}[r]
 & C_{h_{n}}\ .
 }
\]

\noindent Let us estimate the distance between the  $*$-homomorphisms
$(\mathrm{Ad}\,{v_{f_{n}}})\psi\circ(\mathrm{Ad}\,{u_{n}})\varphi: A_{n}\to C_{h_{n}}$ and
$(\mathrm{Ad}\,{w_{n}})\psi\varphi : A_{n}\to C_{h_{n}}$. For any $x\in A$ we have:

\begin{align*}
\Vert (\mathrm{Ad}\,{v_{f_{n}}})\psi\circ(\mathrm{Ad}\,{u_{n}})\varphi(x)
- (\mathrm{Ad}\,{w_{n}})\psi\varphi(x) \Vert &\leq
\Vert (\mathrm{Ad}\,{v_{f_{n}}})\psi\circ(\mathrm{Ad}\,{u_{n}})\varphi(x)\\
&-
 (\mathrm{Ad}\,{v_{f_{n}}})\psi\varphi(x) \Vert\\
  &+
\Vert (\mathrm{Ad}\,{v_{f_{n}}})\psi\varphi(x)\\
&- (\mathrm{Ad}\,{w_{n}})\psi\varphi(x)\Vert\\
&\leq \Vert (\mathrm{Ad}\,{u_{n}})\varphi(x)-\varphi(x)\Vert\\
&+  2\Vert v_{f_{n}}-w_{n}\Vert \Vert x \Vert\\
&\leq 2\Vert u_{n}-1\Vert \Vert x \Vert
+ 2\Vert v_{f_{n}}-w_{n}\Vert \Vert x \Vert.
\end{align*}
Thus,
\[\Vert (\mathrm{Ad}\,{v_{f_{n}}})\psi\circ(\mathrm{Ad}\,{u_{n}})\varphi -
(\mathrm{Ad}\,{w_{n}})\psi\varphi\Vert
\leq 2 \Vert u_{n}-1\Vert
+ 2\Vert v_{f_{n}}-w_{n}\Vert<\frac{1}{2}+\frac{1}{4}+\frac{1}{4}=1. \]

\noindent By Lemma~\ref{lempart},
the homomorphisms
$(\mathrm{Ad}\,{v_{f_{n}}})\psi\circ(\mathrm{Ad}\,{u_{n}})\varphi$
and
$(\mathrm{Ad}\,{w_{n}})\psi\varphi$ have the same multiplicity matrices. Thus, the diagram

\[
\xymatrix{V_{n}\ar[d]_{F_{n}}\ar[rdd]^{H_{n}}
 &  \\
 W_{f_{n}}\ar[d]_{G_{f_{n}}}
 & \\
 Z_{g_{f_{n}}} \ar[r]
 & Z_{h_{n}}
 }
\]

\noindent is commutative. By Definition~\ref{defeq},
the premorphism
$((H_{n})_{n=1}^{\infty}, (h_{n})_{n=1}^{\infty})$ is equivalent to
the composition of
$((G_{n})_{n=1}^{\infty}, (g_{n})_{n=1}^{\infty})$ and
$((F_{n})_{n=1}^{\infty}, (f_{n})_{n=1}^{\infty})$. Note that this composition of premorphisms is an admissible premorphism for $\psi\varphi$ in the sense required in Definition~\ref{deffu}, since
$\Vert w_{f_{n}}u_{n} - 1\Vert <\frac{1}{4}+\frac{1}{8}<\frac{1}{2}$. By Definition~\ref{deffu} (which is vindicated by the fact, proved above, that $\mathcal{B}(\psi\varphi)$ defined in this way is well defined), this composition represents $\mathcal{B}(\psi\varphi)$, and so
 $\mathcal{B}(\psi\varphi)=\mathcal{B}(\psi)\mathcal{B}(\varphi)$.

The remaining condition,
$\mathcal{B}(\mathrm{id} _{\mathcal{A}})=\mathrm{id}_{\mathcal{B}(\mathcal{A})}$,
is clear.
\end{proof}
The following lemma is used to prove Theorem~\ref{thrst}. The hypothesis of this lemma is just Bratteli's notion of
equivalence of Bratteli diagrams in his paper \cite{br72}---which is easily seen to be the same as isomorphism in our category.
We give a proof for the convenience of the reader.
\begin{lemma}\label{lemeq}
Let $\mathcal{A}_{1}$ and $\mathcal{A}_{2}$ be in $\mathbf{AF}$ with
$\mathcal{A}_{1}=(A, (A_{n})_{n=1}^{\infty},(\varphi_{n})_{n=1}^{\infty})$,
$\mathcal{A}_{2}=(B, (B_{n})_{n=1}^{\infty},(\psi_{n})_{n=1}^{\infty})$,
$\mathcal{B}(\mathcal{A}_{1})=((V_{n})_{n=1}^{\infty}, (E_{n})_{n=1}^{\infty})$, and
$\mathcal{B}(\mathcal{A}_{2})=((W_{n})_{n=1}^{\infty}, (S_{n})_{n=1}^{\infty}))$.
Then
$\mathcal{A}_{1}\cong \mathcal{A}_{2}$ in $\mathbf{AF}$
if and only if
there are sequences $(r_{k})_{k=1}^{\infty}$ and $(t_{k})_{k=1}^{\infty}$ of
positive integers with
$r_{1}<t_{1}<r_{2}<t_{2}<\cdots$, and there are  multiplicity matrices
$R_{k}: V_{r_{k}}\to W_{t_{k}}$ and $T_{k}: W_{t_{k}}\to V_{r_{k+1}}$, for each $k\geq 1$,
such that the following diagram commutes:
\end{lemma}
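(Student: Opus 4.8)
The plan is to prove the two implications separately: the reverse direction ($\Leftarrow$) by an \emph{exact} intertwining argument, and the forward direction ($\Rightarrow$) by repeatedly invoking Lemma~\ref{lembr}. Throughout, commutativity of the displayed ladder $V_{r_{1}}\to W_{t_{1}}\to V_{r_{2}}\to\cdots$ means precisely that $T_{k}R_{k}=E_{r_{k}r_{k+1}}$ and $R_{k+1}T_{k}=S_{t_{k}t_{k+1}}$ for every $k\geq 1$, the horizontal arrows being the partial products $E_{r_{k}r_{k+1}}$ and $S_{t_{k}t_{k+1}}$. Since $(r_{k})$ and $(t_{k})$ are strictly increasing they are cofinal, so $A=\overline{\bigcup_{k}A_{r_{k}}}$ and $B=\overline{\bigcup_{k}B_{t_{k}}}$; as a morphism in $\mathbf{AF}$ is just a $*$-homomorphism of the underlying algebras, it suffices to produce a $*$-isomorphism $A\to B$ (respectively, to extract the matrices from one).

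For ($\Leftarrow$) I would first lift the given multiplicity matrices to honest $*$-homomorphisms using the canonical maps $h(R_{k}),h(T_{k})$ and the fixed isomorphisms: set $\alpha_{k}=\psi_{t_{k}}^{-1}h(R_{k})\varphi_{r_{k}}\colon A_{r_{k}}\to B_{t_{k}}$ and $\beta_{k}=\varphi_{r_{k+1}}^{-1}h(T_{k})\psi_{t_{k}}\colon B_{t_{k}}\to A_{r_{k+1}}$, so that $R_{\alpha_{k}}=R_{k}$ and $R_{\beta_{k}}=T_{k}$. The inclusions $A_{r_{k}}\hookrightarrow A_{r_{k+1}}$ and $B_{t_{k}}\hookrightarrow B_{t_{k+1}}$ have multiplicity matrices $E_{r_{k}r_{k+1}}$ and $S_{t_{k}t_{k+1}}$ (by the definition of $\mathcal{B}$ together with Lemma~\ref{lempr}), so by Lemma~\ref{lempr} and the hypothesized matrix identities, $\beta_{k}\alpha_{k}$ and $\alpha_{k+1}\beta_{k}$ have the same multiplicity matrices as these inclusions. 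By Lemma~\ref{lemun} I can therefore correct the maps by inner automorphisms of the codomains, one at a time in the order $\alpha_{1},\beta_{1},\alpha_{2},\beta_{2},\dots$, to obtain $\tilde\alpha_{k},\tilde\beta_{k}$ (still with multiplicity matrices $R_{k},T_{k}$) for which $\tilde\beta_{k}\tilde\alpha_{k}$ and $\tilde\alpha_{k+1}\tilde\beta_{k}$ equal the respective inclusions \emph{exactly}; each correction touches only the newly introduced map, so no previously established relation is disturbed. These exact relations make $(\tilde\alpha_{k})$ and $(\tilde\beta_{k})$ coherent families of $*$-homomorphisms, which assemble and extend by continuity to mutually inverse $*$-homomorphisms $A\to B$ and $B\to A$, giving $\mathcal{A}_{1}\cong\mathcal{A}_{2}$.

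For ($\Rightarrow$), suppose $\Phi\colon A\to B$ is a $*$-isomorphism. I would construct the indices and connecting maps alternately: given $r_{k}$, apply Lemma~\ref{lembr} to the finite dimensional algebra $\Phi(A_{r_{k}})\subseteq B$ to find $t_{k}>t_{k-1},r_{k}$ and a unitary $u$ close to $1$ with $u\Phi(A_{r_{k}})u^{*}\subseteq B_{t_{k}}$, and take $R_{k}$ to be the multiplicity matrix of $(\mathrm{Ad}\,u)\Phi\upharpoonright_{A_{r_{k}}}$; then apply Lemma~\ref{lembr} to $\Phi^{-1}(B_{t_{k}})\subseteq A$ to find $r_{k+1}>r_{k},t_{k}$ and a unitary $w$ close to $1$ with $w\Phi^{-1}(B_{t_{k}})w^{*}\subseteq A_{r_{k+1}}$, and take $T_{k}$ to be the multiplicity matrix of $(\mathrm{Ad}\,w)\Phi^{-1}\upharpoonright_{B_{t_{k}}}$. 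Always choosing each new index larger than its predecessor yields the interlacing $r_{1}<t_{1}<r_{2}<\cdots$. By Lemma~\ref{lempr}, $T_{k}R_{k}$ is the multiplicity matrix of the composite $\theta=(\mathrm{Ad}\,w)\Phi^{-1}(\mathrm{Ad}\,u)\Phi\upharpoonright_{A_{r_{k}}}\colon A_{r_{k}}\to A_{r_{k+1}}$, and it remains to show this equals $E_{r_{k}r_{k+1}}$, the multiplicity matrix of the inclusion $\iota\colon A_{r_{k}}\hookrightarrow A_{r_{k+1}}$ (and symmetrically for $R_{k+1}T_{k}=S_{t_{k}t_{k+1}}$).

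The crux, and the step I expect to be the main obstacle, is this last comparison. A direct computation gives $\theta(a)=zaz^{*}$ with $z=w\Phi^{-1}(u)\in A^{\sim}$, a unitary in the large algebra that need \emph{not} normalize $A_{r_{k+1}}$; hence one cannot simply invoke Lemma~\ref{lemun} to conclude $R_{\theta}=R_{\iota}$. The remedy is to exploit the freedom in Lemma~\ref{lembr} to make $\Vert u-1\Vert$ and $\Vert w-1\Vert$ small: then $\Vert z-1\Vert\leq \Vert w-1\Vert+\Vert u-1\Vert$ (as $\Phi^{-1}$ is isometric on the unitizations), whence $\Vert\theta-\iota\Vert\leq 2\Vert z-1\Vert<1$ once every unitary is taken within distance $1/8$ of $1$. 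Lemma~\ref{lempart} then yields $R_{\theta}=R_{\iota}=E_{r_{k}r_{k+1}}$, as required. Thus the norm control built into Lemma~\ref{lembr}, rather than a naive conjugation argument, is exactly what carries the forward direction through.
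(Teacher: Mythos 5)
Your proof is correct; the backward direction is essentially the paper's own argument, while the forward direction takes a genuinely different route. For ($\Leftarrow$), the paper also lifts $R_{k}$ and $T_{k}$ to $*$-homomorphisms via $h(\cdot)$ and corrects them one at a time by unitaries of the codomains --- packaged there as Corollary~\ref{corcom}, which is exactly your combination of Lemma~\ref{lempr} and Lemma~\ref{lemun} --- and then assembles the exact two-sided intertwining; the only difference is presentational, since the paper works with the external algebras $A_{r_k}'$, $B_{t_k}'$ and forms $\varinjlim(A_{r_{k}}',\alpha_{k})$ and $\varinjlim(B_{t_{k}}',\beta_{k})$, whereas you conjugate by the fixed isomorphisms and build the mutually inverse maps inside $A$ and $B$ directly. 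For ($\Rightarrow$), the paper avoids the approximation problem you correctly identify as the crux: it quotes \cite[Lemma~2.6]{br72}, which supplies an isomorphism $\varphi$ with $\varphi(\bigcup_{n}A_{n})\subseteq \bigcup_{n}B_{n}$, so that one may choose $t_{k}$ and $r_{k+1}$ with $\varphi(A_{r_{k}})\subseteq B_{t_{k}}$ and $\varphi^{-1}(B_{t_{k}})\subseteq A_{r_{k+1}}$ \emph{exactly}; commutativity of the matrix ladder is then immediate from Lemma~\ref{lempr}, with no perturbation estimate at all. You instead keep an arbitrary isomorphism $\Phi$ and absorb the defect into the norm control built into Lemma~\ref{lembr}, concluding $R_{\theta}=R_{\iota}$ from Lemma~\ref{lempart}; your estimates check out, since $\theta=\mathrm{Ad}\,z$ on $A_{r_{k}}$ with $z=w\Phi^{-1}(u)$, $\Vert\theta-\iota\Vert\leq 2(\Vert u-1\Vert+\Vert w-1\Vert)<1$ for unitaries within $1/8$ of $1$, and the symmetric estimate handles $R_{k+1}T_{k}=S_{t_{k}t_{k+1}}$. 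The trade-off: the paper's route is shorter but leans on an external result of Bratteli, while yours is self-contained in the paper's own toolkit --- in effect you inline the proof of Bratteli's Lemma~2.6 using the same $\mathrm{Ad}\,u$-perturbation technique the paper itself deploys in Proposition~\ref{profun}. One small bonus of either construction, used later in Theorem~\ref{thrcong}: the matrices produced are in fact embedding matrices, which in your version follows because $(\mathrm{Ad}\,u)\Phi\upharpoonright_{A_{r_{k}}}$ and $(\mathrm{Ad}\,w)\Phi^{-1}\upharpoonright_{B_{t_{k}}}$ are injective.
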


\[
\xymatrix{
V_{r_{\resizebox{.28em}{.28em}{1}}}
\ar[rr]^{E_{r_{\resizebox{.26em}{.26em}{1}}r_{\resizebox{.26em}{.26em}{2}}}}
\ar[rd]_{R_{1}} &
 & V_{r_{\resizebox{.28em}{.28em}{2}}}
 \ar[rr]^{E_{r_{\resizebox{.26em}{.26em}{2}}r_{\resizebox{.26em}{.26em}{3}}}}
 \ar[rd]_{R_{2}} &
 & V_{r_{\resizebox{.28em}{.28em}{3}}}\ar[rr] & &
  \cdots  \\
& W_{t_{\resizebox{.26em}{.26em}{1}}}
\ar[rr]_{S_{t_{\resizebox{.26em}{.26em}{1}}t_{\resizebox{.26em}{.26em}{2}}}}
\ar[ru]_{T_{1}} &
 &  W_{t_{\resizebox{.26em}{.26em}{2}}}
 \ar[rr]_{S_{t_{\resizebox{.26em}{.26em}{2}}t_{\resizebox{.26em}{.26em}{3}}}}\ar[ru]_{T_{2}} &
 & \cdots \ .&
 }
 \]
\begin{proof}
First suppose that $\mathcal{A}_{1}\cong \mathcal{A}_{2}$. There is a
$*$-isomorphism $\varphi: A\to B$ such that
$\varphi(\bigcup_{n=1}^{\infty}A_{n})\subseteq \bigcup_{n=1}^{\infty}B_{n}$,
by \cite[Lemma~2.6]{br72}.
Set $r_{1}=1$. Since
$\varphi(\bigcup_{n=1}^{\infty}A_{n})\subseteq \bigcup_{n=1}^{\infty}B_{n}$, there is $t_{1}>1$
with $\varphi(A_{r_{1}})\subseteq B_{t_{1}}$. Similarly there is $r_{2}>t_{1}$ such that
$\varphi^{-1}(B_{t_{1}})\subseteq A_{r_{2}}$. Continuing this procedure, we obtain
sequences $(r_{k})_{k=1}^{\infty}$ and $(t_{k})_{k=1}^{\infty}$ with
$r_{1}<t_{1}<r_{2}<t_{2}<\cdots$ such that
$\varphi(A_{r_{k}})\subseteq B_{t_{k}}$ and
$\varphi^{-1}(B_{t_{k}})\subseteq A_{r_{k+1}}$,
for each $k\geq 1$. Note that $C^{*}(V_{n})=A_{n}'$ and $C^{*}(W_{n})=B_{n}'$,
for $n\geq 1$. Fix $k\geq 1$. Define $\varepsilon_{k}: A_{r_{k}}'\to B_{t_{k}}'$ and
$\delta_{k}: B_{t_{k}}'\to A_{r_{k+1}}'$ with
$\varepsilon_{k}(x)=\psi_{t_{k}}(\varphi(\varphi_{r_{k}}^{-1}(x)))$, $x\in A_{r_{k}}'$,
and $\delta_{k}(x)=\varphi_{r_{k+1}}(\varphi^{-1}(\psi_{t_{k}}^{-1}(x)))$, $x\in B_{t_{k}}'$.
Set
\begin{equation*}
\alpha_{k}=\varphi_{r_{k+1}-1}'\,\varphi_{r_{k+1}-2}'\,\cdots \varphi_{r_{k}}'\ \ \
\text{and}\ \ \
\beta_{k}=\psi_{t_{k+1}-1}'\,\psi_{t_{k+1}-2}'\,\cdots \psi_{t_{k}}'.
\end{equation*}
Then $\delta_{k}\varepsilon_{k}=\alpha_{k}$ and $\varepsilon_{k+1}\delta_{k}=\beta_{k}$.
Set $R_{k}=R_{\varepsilon_{k}}$ and $T_{k}=R_{\delta_{k}}$.
By Lemma~\ref{lempr} we have
$T_{k}R_{k}=E_{r_{k}r_{k+1}}$ and $R_{k+1}T_{k}=S_{t_{k}t_{k+1}}$, i.e.,
the above diagram commutes.

Now let us prove the converse. Define $\alpha_{k}$ and $\beta_{k}$ as above. Let
$\varepsilon_{1}=h(R_{1})$ and $\delta_{1}'=h(T_{1})$ (see the remark following
Lemma~\ref{lempr} for the notation $h(\cdot)$). By Corollary~\ref{corcom},
there is a unitary $u\in A_{r_{2}}'$ such that
$(\mathrm{Ad}\,{u})\delta_{1}'\varepsilon_{1}=\alpha_{1}$;
set $\delta_{1}=(\mathrm{Ad}\,{u})\delta_{1}'$, thus $\delta_{1}\varepsilon_{1}=\alpha_{1}$.
Set $\varepsilon_{2}'=h(R_{2})$. Again by Corollary~\ref{corcom},
there is a unitary $v\in B_{t_{2}}'$ such that
$(\mathrm{Ad}\,{v})\varepsilon_{2}'\delta_{1}=\beta_{1}$; set
$\varepsilon_{2}=(\mathrm{Ad}\,{v})\varepsilon_{2}'$, thus
$\varepsilon_{2}\delta_{1}=\beta_{1}$. Continuing this procedure, we
obtain injective $*$-homomorphisms $\varepsilon_{k}: A_{r_{k}}'\to B_{t_{k}}'$ and
$\delta_{k}: B_{t_{k}}'\to A_{r_{k+1}}'$, for each $k\geq 1$, such that the
following diagram commutes:

\[
\xymatrix{
A_{r_{\resizebox{.28em}{.28em}{1}}}'
\ar[rr]^{\alpha_{1}}\ar[rd]_{\varepsilon_{1}} &
 & A_{r_{\resizebox{.28em}{.28em}{2}}}'\ar[rr]^{\alpha_{2}}\ar[rd]_{\varepsilon_{2}} &
 & A_{r_{\resizebox{.28em}{.28em}{3}}}'\ar[rr] & &
  \cdots  \\
& B_{t_{\resizebox{.28em}{.28em}{1}}}'
\ar[rr]_{\beta_{1}}\ar[ru]_{\delta_{1}} &
 &  B_{t_{\resizebox{.28em}{.28em}{2}}}'
 \ar[rr]_{\beta_{2}}\ar[ru]_{\delta_{2}} &
 & \cdots \ .&
 }
 \]

\noindent Let $A'=\varinjlim (A_{r_{k}}', \alpha_{k})$ and
$B'=\varinjlim (B_{t_{k}}', \beta_{k})$. Thus there is an injective $*$-homomorphism
$\varepsilon: A'\to B'$. Let $\alpha^{k}: A_{r_{k}}'\to A'$ and
$\beta^{k}: B_{t_{k}}'\to B'$ be the $*$-homomorphisms that come
from the construction of the direct limit; thus
$\alpha^{k+1}\alpha_{k}=\alpha^{k}$, $\beta^{k+1}\beta_{k}=\beta^{k}$, and
$\varepsilon \alpha^{k}=\beta^{k}\varepsilon_{k}$, for each $k\geq 1$.
We have $\beta^{k}=\beta^{k+1}\beta_{k}=
\beta^{k+1}\varepsilon_{k+1}\delta_{k}=\varepsilon \alpha^{k+1}\delta_{k}$, hence
$\beta^{k}(B_{t_{k}}')\subseteq \varepsilon(A')$, for each $k\geq 1$; thus
$\varepsilon$ is also onto and hence is a $*$-isomorphism. Moreover,
\begin{equation*}
A\cong \varinjlim (A_{k}', \alpha_{k})\cong A' \cong B' \cong
\varinjlim (B_{k}', \alpha_{k})\cong B.
\end{equation*}
Thus there is a $*$-isomorphism
$\varphi: A\to B$ such that
$\varphi (\bigcup_{n=1}^{\infty}A_{n})\subseteq \bigcup_{n=1}^{\infty}B_{n}$.
Therefore,
$\mathcal{A}_{1}\cong \mathcal{A}_{2}$ in $\mathbf{AF}$.
\end{proof}
The following theorem is due to  Bratteli (\cite{br72}, \cite[Proposition III.2.7]{da96}).
\begin{theorem}[Bratteli]\label{olab}
If $A=\overline{\bigcup_{n\geq 1}A_{n}}$ and $B=\overline{\bigcup_{n\geq
1}B_{n}}$ have the same Bratteli diagrams, then they are isomorphic.
\end{theorem}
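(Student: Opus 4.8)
The plan is to derive this theorem as an essentially immediate corollary of Lemma~\ref{lemeq}. First I would make precise what ``the same Bratteli diagram'' means in the present framework. Fixing structures $\mathcal{A}_1=(A,(A_n)_{n=1}^\infty,(\varphi_n)_{n=1}^\infty)$ and $\mathcal{A}_2=(B,(B_n)_{n=1}^\infty,(\psi_n)_{n=1}^\infty)$ in $\mathbf{AF}$, with $\mathcal{B}(\mathcal{A}_1)=((V_n)_{n=1}^\infty,(E_n)_{n=1}^\infty)$ and $\mathcal{B}(\mathcal{A}_2)=((W_n)_{n=1}^\infty,(S_n)_{n=1}^\infty)$, the hypothesis amounts to $V_n=W_n$ and $E_n=S_n$ for all $n\geq 1$. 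Since the desired conclusion $A\cong B$ is precisely the assertion $\mathcal{A}_1\cong \mathcal{A}_2$ in $\mathbf{AF}$, it suffices to produce the commuting zigzag required by Lemma~\ref{lemeq}.

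Next I would exhibit the required data directly. Choose any interleaved pair of strictly increasing sequences, for instance $r_k=2k-1$ and $t_k=2k$, so that $r_1<t_1<r_2<t_2<\cdots$. Define the multiplicity matrices $R_k=E_{r_k t_k}:V_{r_k}\to W_{t_k}$ and $T_k=E_{t_k r_{k+1}}:W_{t_k}\to V_{r_{k+1}}$, using that $V_n=W_n$ so the indicated sources and targets agree, and recalling (from the note following Definition~\ref{defbd}) that each $E_{nm}$ is a multiplicity matrix from $V_n$ to $V_m$, so that $R_k$ and $T_k$ are legitimate.

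The verification is then pure bookkeeping with the convention $E_{nm}=E_{m-1}\cdots E_n$. Using $E_n=S_n$ throughout, one computes $T_k R_k = E_{t_k r_{k+1}}E_{r_k t_k}=E_{r_k r_{k+1}}$ and $R_{k+1}T_k = E_{r_{k+1}t_{k+1}}E_{t_k r_{k+1}}=E_{t_k t_{k+1}}=S_{t_k t_{k+1}}$, which are exactly the two commutativity relations demanded in Lemma~\ref{lemeq}. That lemma then yields $\mathcal{A}_1\cong \mathcal{A}_2$, i.e. a $*$-isomorphism $A\to B$.

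There is no real obstacle here: all the analytic content — lifting the purely combinatorial zigzag to an honest $*$-isomorphism of the inductive limits by repeated use of Corollary~\ref{corcom} — has already been absorbed into the proof of Lemma~\ref{lemeq}. The only point needing a word of care is that the hypothesis concerns equality of the \emph{assigned} diagrams $\mathcal{B}(\mathcal{A}_i)$, which presupposes the fixed generating sequences $(A_n),(B_n)$ and isomorphisms $(\varphi_n),(\psi_n)$; once these are in place the theorem is a one-line consequence. I would also remark that if ``same diagram'' is read in the weaker sense of Bratteli equivalence (agreement only after passing to subsequences), the identical zigzag construction applies, with $r_k,t_k$ chosen along the matching subsequences rather than the all-of-$\N$ interleaving, so that the statement holds under that reading as well.
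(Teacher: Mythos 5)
Your proposal is correct and follows essentially the same route as the paper, which presents Theorem~\ref{olab} as Bratteli's result subsumed by Lemma~\ref{lemeq}: equality of the diagrams is the trivial special case of the equivalence (zigzag) hypothesis, with all the analytic work (the intertwining via Corollary~\ref{corcom}) already contained in the converse direction of that lemma. Your explicit interleaving $r_k=2k-1$, $t_k=2k$ with $R_k=E_{r_kt_k}$, $T_k=E_{t_kr_{k+1}}$ and the verification $T_kR_k=E_{r_kr_{k+1}}$, $R_{k+1}T_k=S_{t_kt_{k+1}}$ is exactly the bookkeeping the paper leaves implicit.
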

In fact, as indicated in Lemma~\ref{lemeq} above, Bratteli proved more.
In the setting of Theorem~\ref{olab}, he showed that if the Bratteli diagram of $A$ is equivalent,
in his sense---which is exactly the same as being isomorphic, in our sense, i.e.,
in the category of Bratteli diagrams of Theorem 2.7---, to
the Bratteli diagram of $B$, then $A$ is isomorphic to $B$.

Recall that a functor $F:\mathcal{C}\to \mathcal{D}$
was called in \cite{el10} a \emph{classification functor} if
$F(a)\cong F(b)$ implies $a\cong b$, for each $a,b\in \mathcal{C}$, and a \emph{strong classification functor} if each isomorphism from $F(a)$ onto $F(b)$ is the image of an isomorphism from $a$ to $b$.
With these concepts, one has a functorial formulation of  Bratteli's theorem.
\begin{theorem}\label{thrst}
The functor
 $\mathcal{B}: \mathbf{AF} \to \mathbf{BD}$ is a strong classification functor.
\end{theorem}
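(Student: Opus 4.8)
The plan is to prove the stronger statement, which contains the classification property. Concretely, I will show that every isomorphism $\Phi\colon\mathcal{B}(\mathcal{A}_1)\to\mathcal{B}(\mathcal{A}_2)$ in $\mathbf{BD}$ is of the form $\mathcal{B}(\varphi)$ for some $\ast$-isomorphism $\varphi\colon A\to B$. The reverse implication, that $\mathcal{A}_1\cong\mathcal{A}_2$ forces $\mathcal{B}(\mathcal{A}_1)\cong\mathcal{B}(\mathcal{A}_2)$, is immediate from functoriality (Proposition~\ref{profun}) applied to an isomorphism and its inverse. So fix an isomorphism $\Phi=[f]$ with inverse $\Psi=[g]$, and choose premorphism representatives $f=((F_n),(f_n))$ and $g=((G_n),(g_n))$. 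The identities $\Psi\Phi=\mathrm{id}$ and $\Phi\Psi=\mathrm{id}$ say precisely that $gf\sim\mathrm{id}_{\mathcal{B}(\mathcal{A}_1)}$ and $fg\sim\mathrm{id}_{\mathcal{B}(\mathcal{A}_2)}$.

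The heart of the argument is to manufacture from these two equivalences an honestly commuting zig-zag of the kind demanded by Lemma~\ref{lemeq}. I would take $R_k=F_{r_k}$ with $t_k=f_{r_k}$, and set $T_k=E_{g_{f_{r_k}}\,r_{k+1}}\,G_{f_{r_k}}\colon W_{t_k}\to V_{r_{k+1}}$. Applying Definition~\ref{defeq2} to $gf\sim\mathrm{id}$ gives, for each $k$ and all large enough $r_{k+1}$, the honest identity $E_{g_{f_{r_k}}\,r_{k+1}}\,G_{f_{r_k}}F_{r_k}=E_{r_k r_{k+1}}$, that is, $T_kR_k=E_{r_kr_{k+1}}$. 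For the remaining squares, the premorphism property of $f$ yields $F_{r_{k+1}}E_{g_{f_{r_k}}\,r_{k+1}}=S_{f_{g_{f_{r_k}}}\,f_{r_{k+1}}}F_{g_{f_{r_k}}}$, so that $R_{k+1}T_k=S_{f_{g_{f_{r_k}}}\,f_{r_{k+1}}}F_{g_{f_{r_k}}}G_{f_{r_k}}$; applying Definition~\ref{defeq2} to $fg\sim\mathrm{id}$ at the index $f_{r_k}$ then collapses the right-hand side to $S_{f_{r_k}f_{r_{k+1}}}=S_{t_kt_{k+1}}$, provided $f_{r_{k+1}}$ is large enough. Choosing the $r_k$ inductively, increasing fast enough (using cofinality of $(f_n)$) to validate both identities, and enlarging the $t_k$ if necessary by composing $R_k$ with a connecting matrix $S$ so that the two index sequences strictly increase and interleave, produces the commuting diagram required in Lemma~\ref{lemeq}.

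With the zig-zag in hand, the converse direction of Lemma~\ref{lemeq} yields a $\ast$-isomorphism $\varphi\colon A\to B$; in particular $\mathcal{A}_1\cong\mathcal{A}_2$, which already gives the weak classification statement. It remains to check $\mathcal{B}(\varphi)=\Phi$. By construction that isomorphism realizes each $R_k=F_{r_k}$ as the multiplicity matrix of $\varphi$ restricted to $A_{r_k}$ mapping into $B_{t_k}=B_{f_{r_k}}$; since $\varphi(A_{r_k})\subseteq B_{f_{r_k}}$ on the nose, one may compute $\mathcal{B}(\varphi)$ using the trivial unitaries $u_{r_k}=1$ in Definition~\ref{deffu} and read off a premorphism whose value at $r_k$ is $F_{r_k}$ at level $f_{r_k}$. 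This premorphism agrees with $f$ on the cofinal set $\{r_k\}$, hence is equivalent to $f$ by Definition~\ref{defeq}. Therefore $\mathcal{B}(\varphi)=[f]=\Phi$, and $\mathcal{B}$ is a strong classification functor.

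The step I expect to fight hardest with is the construction of the honest zig-zag, precisely because of the phenomenon exhibited in the example in Section~2: an embedding matrix need not be injective as a linear map, so one cannot pass from commutativity-at-infinity to on-the-nose matrix identities by left-cancelling a connecting matrix. The plan circumvents this by never cancelling: $T_k$ is deliberately defined as the composite of $G_{f_{r_k}}$ with the connecting matrix $E_{g_{f_{r_k}}\,r_{k+1}}$, so that $T_kR_k=E_{r_kr_{k+1}}$ is literally the identity furnished by Definition~\ref{defeq2}, not a cancelled consequence of it. A secondary point requiring care is the equality $\mathcal{B}(\varphi)=\Phi$: since $\mathcal{B}(\varphi)$ is defined through data at every level $n$ while the construction controls only the cofinal levels $r_k$, the equality must be obtained by comparing the two premorphisms on the subsequence $\{r_k\}$ and invoking that restriction to a cofinal subsequence does not change the $\sim$-equivalence class.
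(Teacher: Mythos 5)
Your proposal is correct and follows essentially the same route as the paper: both convert the equivalences $gf\sim\mathrm{id}$ and $fg\sim\mathrm{id}$ into an honestly commuting interleaved zig-zag whose matrices are composites of the $F$'s, $G$'s and connecting matrices (thereby avoiding, exactly as you note, any cancellation of non-injective embedding matrices), then invoke Lemma~\ref{lemeq} to produce the $*$-isomorphism $\varphi$, and finally observe that $\varphi$ carries $\bigcup_{n}A_{n}$ into $\bigcup_{n}B_{n}$ so that the premorphism it induces agrees with $f$ on a cofinal set of indices, whence $\mathcal{B}(\varphi)=[f]$. The only cosmetic difference is that you route the bookkeeping through Definition~\ref{defeq2} (legitimate by Proposition~\ref{proeqpre}) instead of the subsequence data of Definition~\ref{defeq} that the paper manipulates directly; your matrices are the paper's $R_{k}=S_{f_{x_{k}}t_{k}}F_{x_{k}}E_{r_{k}x_{k}}$ and $T_{k}=E_{g_{y_{k}}r_{k+1}}G_{y_{k}}S_{t_{k}y_{k}}$ in the special case $x_{k}=r_{k}$, $y_{k}=t_{k}$.
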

\begin{proof}
Let
$\mathcal{A}_{1}=(A, (A_{n})_{n=1}^{\infty},(\varphi_{n})_{n=1}^{\infty})$ and
$\mathcal{A}_{2}=(B, (B_{n})_{n=1}^{\infty},(\psi_{n})_{n=1}^{\infty})$
 in $\mathbf{AF}$ be such that $\mathcal{B}(\mathcal{A}_{1})\cong \mathcal{B}(\mathcal{A}_{2})$.
Write $\mathcal{B}(\mathcal{A}_{1})=((V_{n})_{n=1}^{\infty}, (E_{n})_{n=1}^{\infty})$ and
$\mathcal{B}(\mathcal{A}_{2})=((W_{n})_{n=1}^{\infty}, (S_{n})_{n=1}^{\infty}))$.
There are premorphisms
$f : \mathcal{B}(\mathcal{A}_{1})\to \mathcal{B}(\mathcal{A}_{2})$ and
$g : \mathcal{B}(\mathcal{A}_{2})\to \mathcal{B}(\mathcal{A}_{1})$ such that
$[gf]=[\mathrm{id}_{\mathcal{B}(\mathcal{A}_{1})}]$ and
$[fg]=[\mathrm{id}_{\mathcal{B}(\mathcal{A}_{2})}]$.

Suppose that $f=((F_{n})_{n=1}^{\infty}), (f_{n})_{n=1}^{\infty})$,
$g=((G_{n})_{n=1}^{\infty}), (g_{n})_{n=1}^{\infty})$,
$h=gf$, and $l=fg$ where $h=((H_{n})_{n=1}^{\infty}), (h_{n})_{n=1}^{\infty})$ and
$l=((L_{n})_{n=1}^{\infty}), (l_{n})_{n=1}^{\infty})$. Choose  sequences
$(n_{k})_{k=1}^{\infty}$ and $(m_{k})_{k=1}^{\infty}$ for
$gf\sim \mathrm{id}_{\mathcal{B}(\mathcal{A}_{1})}$
and choose $(p_{k})_{k=1}^{\infty}$ and $(q_{k})_{k=1}^{\infty}$
for $fg\sim \mathrm{id}_{\mathcal{B}(\mathcal{A}_{2})}$, 	
according to Definition~\ref{defeq}. Thus we have
\begin{equation}\label{for1}
E_{n_{k}m_{j}}=E_{h_{n_{k}}m_{j}}H_{n_{k}},\ \  \ \
H_{n_{j}}E_{m_{k}n_{j}}=E_{m_{k}h_{n_{j}}},
\end{equation}
\begin{equation}\label{for2}
S_{p_{k}q_{j}}=S_{l_{p_{k}}{q_{j}}}L_{p_{k}},\ \  \ \
L_{p_{j}}S_{q_{k}p_{j}}=S_{q_{k}l_{p_{j}}},
\end{equation}
for any positive
integers $k$ and $j$ with $k\leq j$. We construct sequences
$(r_{k})_{k=1}^{\infty}$ and $(t_{k})_{k=1}^{\infty}$ to apply
Lemma~\ref{lemeq}. Set $r_{1}=m_{1}$.  The diagram

\[
\xymatrix{
V_{m_{\resizebox{.28em}{.28em}{1}}}
\ar[r]^{E_{m_{\resizebox{.26em}{.26em}{1}}n_{\resizebox{.26em}{.26em}{2}}}}
\ar[dd]_{I}
& V_{n_{\resizebox{.28em}{.28em}{2}}}
\ar[d]^{F_{n_{\resizebox{.26em}{.26em}{2}}}}\\
 & W_{f_{n_{\resizebox{.26em}{.26em}{2}}}}
 \ar[d]^{G_{f_{n_{\resizebox{.26em}{.26em}{2}}}}}\\
 V_{m_{\resizebox{.28em}{.28em}{1}}}
 \ar[r]_{E_{m_{\resizebox{.26em}{.26em}{1}}h_{n_{\resizebox{.26em}{.26em}{2}}}}}
 & V_{h_{n_{\resizebox{.26em}{.26em}{2}}}}
}
\]

\noindent
commutes by Equation~\eqref{for1}, where $I$ is the identity matrix with
suitable size. Set $R_{1}'=F_{n_{2}}E_{m_{1}n_{2}}$. Thus the diagram

\[
\xymatrix{
V_{r_{\resizebox{.28em}{.28em}{1}}}
\ar[rr]^{E_{r_{\resizebox{.26em}{.26em}{1}}h_{n_{\resizebox{.26em}{.26em}{2}}}}}
\ar[rd]_{R_{\resizebox{.26em}{.26em}{1}}'}& &
V_{h_{n_{\resizebox{.26em}{.26em}{2}}}}\\
&W_{f_{n_{\resizebox{.26em}{.26em}{2}}}}
\ar[ru]_{G_{f_{n_{\resizebox{.26em}{.26em}{2}}}}}&
}
\]

\noindent commutes. There is $j\geq 1$ such that $q_{j}>f_{n_{2}}$.
The diagram

\[
\xymatrix{
W_{q_{j}}\ar[r]^{S_{q_{j}p_{j+1}}}\ar[dd]_{I}
& W_{p_{j+1}}\ar[d]^{G_{p_{j+1}}}\\
 & V_{g_{p_{j+1}}}\ar[d]^{F_{g_{p_{j+1}}}}\\
 W_{q_{j}}\ar[r]_{S_{q_{j}l_{p_{j+1}}}}
 & W_{l_{p_{j+1}}}
}
\]

\noindent
commutes, by Equation~\eqref{for2}. Set
$T_{1}'=G_{p_{j+1}}S_{q_{j}l_{p_{j+1}}}$. Then the diagram

\[
\xymatrix{
& V_{g_{p_{j+1}}}\ar[rd]^{F_{g_{p_{j+1}}}}&
\\
W_{q_{j}}\ar[rr]_{S_{q_{j}l_{p_{j+1}}}}\ar[ru]^{T_{1}'}& &W_{l_{p_{j+1}}}
}
\]

\noindent commutes. Since $g$ is a premorphism we have
\begin{equation*}
T_{1}'S_{f_{n_{2}}q_{j}}=
G_{p_{j+1}}S_{q_{j}{p_{j+1}}}S_{f_{n_{2}}q_{j}}
=G_{p_{j+1}}S_{f_{n_{2}}p_{j+1}}
=E_{h_{n_{2}}g_{p_{j+1}}}G_{f_{n_{2}}}.
\end{equation*}
Hence, the following diagram is commutative:

\[
\xymatrix{
V_{r_{\resizebox{.28em}{.28em}{1}}}\ar[rr]^{E_{r_{\resizebox{.26em}{.26em}{1}}h_{n_{\resizebox{.26em}{.26em}{2}}}}}
\ar[rd]_{R_{1}'}& &
V_{h_{n_{\resizebox{.26em}{.26em}{2}}}}
\ar[rr]^{E_{h_{n_{\resizebox{.26em}{.26em}{2}}}g_{p_{j+1}}}} & &
V_{g_{p_{j+1}}}\ar[rd]^{F_{g_{p_{j+1}}}}\\
&W_{f_{n_{\resizebox{.26em}{.26em}{2}}}}
\ar[rr]_{S_{f_{n_{\resizebox{.26em}{.26em}{2}}}q_{j}}}
\ar[ru]_{G_{f_{n_{\resizebox{.26em}{.26em}{2}}}}}& &
W_{q_{j}}\ar[ur]_{T_{1}'}\ar[rr]_{S_{q_{j}l_{p_{j+1}}}}& &W_{l_{p_{j+1}}}
\ .}
\]

Set $t_{1}=q_{j}$ and $R_{1}=S_{f_{n_{2}}q_{j}}R_{1}'$. Then the diagram

\[
\xymatrix{
V_{r_{\resizebox{.28em}{.28em}{1}}}
\ar[rr]^{E_{r_{\resizebox{.26em}{.26em}{1}}h_{n_{\resizebox{.26em}{.26em}{2}}}}}
\ar[rd]_{R_{1}}& &
 V_{g_{p_{j+1}}}\ar[rd]^{F_{g_{p_{j+1}}}} &\\
&W_{t_{\resizebox{.28em}{.28em}{1}}}\ar[rr]_{S_{t_{1}l_{p_{j+1}}}}\ar[ru]_{T_{1}'}&
&W_{l_{p_{j+1}}}
}
\]

\noindent is commutative. Continuing this procedure, we obtain
sequences $(r_{k})_{k=1}^{\infty}$ and $(t_{k})_{k=1}^{\infty}$ of
positive integers with
$r_{1}<t_{1}<r_{2}<t_{2}<\cdots$, and   multiplicity matrices
$R_{k}: V_{r_{k}}\to W_{t_{k}}$ and $T_{k}: W_{t_{k}}\to V_{r_{k+1}}$, for each $k\geq 1$,
such that all the  diagrams in Lemma~\ref{lemeq} commute.
 In fact, by the construction, for each $k\geq 1$ there are
 positive integers $x_{k}$ and $y_{k}$ with $r_{k}\leq x_{k}\leq t_{k}$ and
 $t_{k}\leq y_{k}\leq r_{k+1}$ such that
 $R_{k}=S_{f_{x_{k}}t_{k}}F_{x_{k}}E_{r_{k}x_{k}}$ and
 $T_{k}=E_{g_{y_{k}r_{k+1}}}G_{y_{k}}S_{t_{k}y_{k}}$.
Therefore by
 Lemma~\ref{lemeq}, $\mathcal{A}_{1}\cong \mathcal{A}_{2}$.

To show that the classification functor $\mathcal{B}: \mathbf{AF} \to \mathbf{BD}$
 is strong, note that the $*$-isomorphism
 $\varphi : A\to B$ given by Lemma~\ref{lemeq} satisfies
$\varphi (\bigcup_{n=1}^{\infty}A_{n})\subseteq \bigcup_{n=1}^{\infty}B_{n}$, and
therefore $f$ above is admissible in Definition~\ref{deffu}, so that  $\mathcal{B}(\varphi)=[f]$.

An alternative proof can be given using \cite[Theorem~3]{el10} as follows.
By Theorem~\ref{throutbar} we have
$\mathcal{B}=\overline{\mathcal{B}}\,\overline{\mathcal{F}}$ and
$\overline{\mathcal{B}}$ is a strong classification functor, and
by \cite[Theorem~3]{el10} so also is $\overline{\mathcal{F}}$.
This shows that $\mathcal{B}$ is the composition of two strong classification functors and so
it is also a strong classification functor.
\end{proof}
\begin{corollary}\label{coreq}
Let $\mathcal{A}_{1}, \mathcal{A}_{2}\in \mathbf{AF}$. Then
$\mathcal{A}_{1}\cong \mathcal{A}_{2}$ in $\mathbf{AF}$ if and only if
$\mathcal{B}(\mathcal{A}_{1})\cong \mathcal{B}(\mathcal{A}_{2})$
in $\mathbf{BD}$.
\end{corollary}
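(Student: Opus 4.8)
The plan is to read this off directly from Theorem~\ref{thrst}, which asserts that $\mathcal{B}$ is a strong classification functor, together with the bare functoriality of $\mathcal{B}$ established in Proposition~\ref{profun}. Neither direction requires genuinely new work; the corollary merely repackages the two halves of the classification property into a single if-and-only-if statement.

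For the forward implication I would invoke the general fact that every functor preserves isomorphisms. Concretely, suppose $\mathcal{A}_{1}\cong \mathcal{A}_{2}$ in $\mathbf{AF}$, so that there is a morphism $\varphi:\mathcal{A}_{1}\to \mathcal{A}_{2}$ (a $*$-isomorphism $A\to B$) with a two-sided inverse $\psi:\mathcal{A}_{2}\to \mathcal{A}_{1}$. Using that $\mathcal{B}$ respects composition and identities (Proposition~\ref{profun}), I have
\[
\mathcal{B}(\psi)\mathcal{B}(\varphi)=\mathcal{B}(\psi\varphi)=\mathcal{B}(\mathrm{id}_{\mathcal{A}_{1}})=\mathrm{id}_{\mathcal{B}(\mathcal{A}_{1})},
\]
and symmetrically $\mathcal{B}(\varphi)\mathcal{B}(\psi)=\mathrm{id}_{\mathcal{B}(\mathcal{A}_{2})}$. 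Hence $\mathcal{B}(\varphi)$ is an isomorphism in $\mathbf{BD}$ and $\mathcal{B}(\mathcal{A}_{1})\cong \mathcal{B}(\mathcal{A}_{2})$.

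For the reverse implication I would simply quote the classification property that is the content of Theorem~\ref{thrst}: since $\mathcal{B}$ is a (strong) classification functor, $\mathcal{B}(\mathcal{A}_{1})\cong \mathcal{B}(\mathcal{A}_{2})$ in $\mathbf{BD}$ forces $\mathcal{A}_{1}\cong \mathcal{A}_{2}$ in $\mathbf{AF}$. The qualifier \emph{strong} supplies more than is needed here, namely that the underlying $\mathbf{AF}$-isomorphism can be taken to lift a prescribed $\mathbf{BD}$-isomorphism; for the corollary one only uses the plain classification conclusion that $\mathcal{B}$ \emph{reflects} isomorphisms.

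There is no real obstacle to anticipate: the entire substance has already been absorbed into Theorem~\ref{thrst}, which in turn rests on Lemma~\ref{lemeq} translating isomorphism of Bratteli diagrams into the commuting zig-zag of multiplicity matrices. The only point worth confirming is the trivial one that the two relations being compared are precisely categorical isomorphism in $\mathbf{AF}$ and in $\mathbf{BD}$, respectively, so that ``preserves isomorphisms'' and ``reflects isomorphisms'' combine to yield the stated equivalence; this is immediate from the definitions of the two categories.
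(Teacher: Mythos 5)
Your proposal is correct and matches the paper's own proof exactly: the forward direction is functoriality (Proposition~\ref{profun}, functors preserve isomorphisms) and the converse is an immediate appeal to Theorem~\ref{thrst}. You simply spell out the standard preservation-of-isomorphisms computation that the paper leaves implicit, which is fine.
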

\begin{proof}
Since
 $\mathcal{B}: \mathbf{AF} \to \mathbf{BD}$  is a functor,
$\mathcal{A}_{1}\cong \mathcal{A}_{2}$ implies
$\mathcal{B}(\mathcal{A}_{1})\cong \mathcal{B}(\mathcal{A}_{2})$.
The converse follows from Theorem~\ref{thrst}.
\end{proof}
As an application of  Theorem~\ref{thrst},
 we can give a proof of Glimm's theorem. Recall that, with  the notation of
Definition~\ref{defaf},  an AF algebra
 $\mathcal{A}=(A, (A_{n})_{n=1}^{\infty},(\varphi_{n})_{n=1}^{\infty})$
 in $\mathbf{AF}$ is called
 a \emph{UHF~algebra} if $A$ is unital, each $A_{n}$ contains the unit of $A$, and
 each $A_{n}$ is a simple C$^{*}$-algebra; thus $A_{n}\cong\mathcal{M}_{k_{n}}$,
 for some $k\geq 1$. Let
$ \mathcal{B}(\mathcal{A})=(V,E)$; thus $V_{n}=(k_{n})$.
 According to Theorem~\ref{thrfd}, since the $*$-isomorphism
 $\varphi_{n+1}\varphi_{n}^{-1}: \mathcal{M}_{k_{n}}\to \mathcal{M}_{k_{n+1}}$
 is unital, we have $k_n | k_{n+1}$ and
 $E_{n}=R_{\varphi_{n+1}\varphi_{n}^{-1}}=k_{n+1}/k_{n}$.
 Therefore, the Bratteli diagram of $\mathcal{A}$ is independent of the choice
 of $\varphi_{n}$'s.

 Denote by $\Prime$ the set of all prime numbers. Define
 $\varepsilon_{\mathcal{A}}:\Prime \to \N \cup\{0,\infty\}$ with
 \begin{equation*}
 \varepsilon_{\mathcal{A}}(p)=\sup\{ m\geq 0 :
 p^{m}| k_{n},\ \text{for some}\ n\geq 1\}, \ p\in \Prime .
 \end{equation*}
The following famous result of Glimm is an easy
consequence  of Proposition~\ref{profun} and  Theorem~\ref{thrst}.
 \begin{theorem}[Glimm,  \cite{gl60}, Theorem~1.12]\label{thrgl}
 Let $\mathcal{A}_{1}, \mathcal{A}_{2}\in \mathbf{AF}$ be two UHF~algebras.
 Then  $\mathcal{A}_{1}\cong \mathcal{A}_{2}$ if and only if
 $\varepsilon_{\mathcal{A}_{1}}=\varepsilon_{\mathcal{A}_{2}}$.
 \end{theorem}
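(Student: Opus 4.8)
The plan is to reduce everything to the category $\mathbf{BD}$ and then to a purely number-theoretic statement about the two ``chain'' diagrams. Writing $\mathcal{B}(\mathcal{A}_1)=(V,E)$ and $\mathcal{B}(\mathcal{A}_2)=(W,S)$ with $V_n=(k_n)$, $W_n=(j_n)$ (so $k_n\mid k_{n+1}$, $j_n\mid j_{n+1}$, and all $E_n$, $S_n$ are $1\times 1$), I note first that, since $k_n\mid k_{n+1}$, the exponent $v_p(k_n)$ of a prime $p$ in $k_n$ is nondecreasing in $n$, so $\varepsilon_{\mathcal{A}_1}(p)=\sup_n v_p(k_n)=\lim_n v_p(k_n)$, and likewise for $\mathcal{A}_2$. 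By Corollary~\ref{coreq} (a consequence of Theorem~\ref{thrst}), $\mathcal{A}_1\cong\mathcal{A}_2$ if and only if $\mathcal{B}(\mathcal{A}_1)\cong\mathcal{B}(\mathcal{A}_2)$ in $\mathbf{BD}$, so it suffices to show that the latter holds if and only if $\varepsilon_{\mathcal{A}_1}=\varepsilon_{\mathcal{A}_2}$.

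For the forward implication I would feed the isomorphism $\mathcal{B}(\mathcal{A}_1)\cong\mathcal{B}(\mathcal{A}_2)$ through Lemma~\ref{lemeq}, obtaining indices $r_1<t_1<r_2<t_2<\cdots$ and $1\times 1$ multiplicity matrices $R_k=(a_k)\colon V_{r_k}\to W_{t_k}$ and $T_k=(b_k)\colon W_{t_k}\to V_{r_{k+1}}$ (positive integers) with $b_k a_k=k_{r_{k+1}}/k_{r_k}$ and $a_{k+1}b_k=j_{t_{k+1}}/j_{t_k}$. The multiplicity condition reads $a_k k_{r_k}\le j_{t_k}$ and $b_k j_{t_k}\le k_{r_{k+1}}$. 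The crux is to observe that these inequalities together with commutativity force equalities: substituting $b_k=k_{r_{k+1}}/(a_k k_{r_k})$ into $b_k j_{t_k}\le k_{r_{k+1}}$ yields $j_{t_k}\le a_k k_{r_k}$, whence $a_k k_{r_k}=j_{t_k}$ and then $b_k j_{t_k}=k_{r_{k+1}}$. Consequently $v_p(a_k)=v_p(j_{t_k})-v_p(k_{r_k})\ge 0$ and $v_p(b_k)=v_p(k_{r_{k+1}})-v_p(j_{t_k})\ge 0$, so the valuations interleave, $v_p(k_{r_1})\le v_p(j_{t_1})\le v_p(k_{r_2})\le\cdots$; taking suprema of these two interleaved nondecreasing sequences (cofinal in $(v_p(k_n))_n$ and $(v_p(j_n))_n$ respectively) gives $\varepsilon_{\mathcal{A}_1}(p)=\varepsilon_{\mathcal{A}_2}(p)$ for every prime $p$.

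For the reverse implication, assuming $\varepsilon_{\mathcal{A}_1}=\varepsilon_{\mathcal{A}_2}$, I would first record the standard cofinality fact: each $k_n$ divides some $j_m$ and each $j_m$ divides some $k_n$. Indeed $k_n$ has finite prime support, and for each prime $p$ dividing it we have $v_p(k_n)\le\varepsilon_{\mathcal{A}_1}(p)=\varepsilon_{\mathcal{A}_2}(p)=\sup_m v_p(j_m)$, so a single large $m$ (the maximum over the finitely many relevant primes) satisfies $k_n\mid j_m$; the other direction is symmetric. Using this I would build indices $r_1<t_1<r_2<t_2<\cdots$ with $k_{r_k}\mid j_{t_k}\mid k_{r_{k+1}}$, set $R_k=(j_{t_k}/k_{r_k})$ and $T_k=(k_{r_{k+1}}/j_{t_k})$ (these are multiplicity matrices, in fact unital), and check directly that $T_kR_k=k_{r_{k+1}}/k_{r_k}=E_{r_k r_{k+1}}$ and $R_{k+1}T_k=j_{t_{k+1}}/j_{t_k}=S_{t_k t_{k+1}}$, so the diagram of Lemma~\ref{lemeq} commutes and $\mathcal{A}_1\cong\mathcal{A}_2$. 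The only real obstacle is the equality-forcing step in the forward direction; once the connecting maps are seen to be unital, the interleaving of $p$-adic valuations makes both directions routine.
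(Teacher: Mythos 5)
Your proof is correct, and while it follows the paper's overall strategy --- classify through the functor $\mathcal{B}$ and translate $\varepsilon_{\mathcal{A}_{1}}=\varepsilon_{\mathcal{A}_{2}}$ into the cofinal divisibility condition~\eqref{condi} --- the mechanics differ in both directions in a way worth noting. The paper works directly with $\mathbf{BD}$-morphisms: in the forward direction it extracts premorphisms $f,g$ with $gf\sim \mathrm{id}$ and $fg\sim \mathrm{id}$ from Proposition~\ref{profun} and simply asserts that condition~\eqref{condi} ``follows,'' and in the reverse direction it builds the $1\times 1$ premorphisms $F_{n}=(m_{f_{n}}/k_{n})$, $G_{n}=(k_{g_{n}}/m_{n})$ and invokes the strong classification Theorem~\ref{thrst}; it also sketches a second, diagram-free argument via infinite tensor products. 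You instead route both directions through the two-sided intertwining data $R_{k}, T_{k}$ of Lemma~\ref{lemeq}. The genuine added value of your version is the equality-forcing step: the observation that the multiplicity inequalities $a_{k}k_{r_{k}}\le j_{t_{k}}$ and $b_{k}j_{t_{k}}\le k_{r_{k+1}}$, combined with $b_{k}a_{k}=k_{r_{k+1}}/k_{r_{k}}$, force $a_{k}k_{r_{k}}=j_{t_{k}}$ and $b_{k}j_{t_{k}}=k_{r_{k+1}}$, so the connecting maps are unital and one gets honest divisibility rather than mere inequality --- precisely the detail the paper suppresses (in the premorphism picture the analogous chain $k_{h_{n}}=G_{f_{n}}F_{n}k_{n}\le G_{f_{n}}m_{f_{n}}\le k_{g_{f_{n}}}=k_{h_{n}}$ does the same job). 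What the paper's route buys is brevity, leaning on the categorical machinery already in place; what yours buys is a fully explicit verification of the elided step, together with clean $p$-adic valuation bookkeeping. One small structural remark: your forward direction passes from the $\mathbf{BD}$-isomorphism back to an $\mathbf{AF}$-isomorphism via Corollary~\ref{coreq} (hence uses Theorem~\ref{thrst}) before applying Lemma~\ref{lemeq}, whereas the paper's forward half needs only functoriality (Proposition~\ref{profun}); since your argument really uses only the easy half of Lemma~\ref{lemeq}, you could remove that dependence by starting from the $\mathbf{AF}$-isomorphism directly, reserving Theorem~\ref{thrst} for the converse as the paper does.
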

\begin{proof}
Let
$\mathcal{A}_{1}=(A, (A_{n})_{n=1}^{\infty},(\varphi_{n})_{n=1}^{\infty})$ and
$\mathcal{A}_{2}=(B, (B_{n})_{n=1}^{\infty},(\psi_{n})_{n=1}^{\infty})$.
Write $\mathcal{B}(\mathcal{A}_{1})=(V,E)$ and
$ \mathcal{B}(\mathcal{A}_{2})=(W,S)$. According to the remarks preceding this theorem,
there are sequences $(k_{n})_{n=1}^{\infty}$ and
$(m_{n})_{n=1}^{\infty}$ of natural numbers such that
$V_{n}=(k_{n})$ and $W_{n}=(m_{n})$, for each $n\geq 1$, and
$k_{1}|k_{2}|\cdots$ and $m_{1}|m_{2}|\cdots$.
Note that $\varepsilon_{\mathcal{A}_{1}}=\varepsilon_{\mathcal{A}_{2}}$
if and only if
\begin{equation}\label{condi}
\forall n\geq 1\  \exists l\geq 1\ \ \  k_{n} |m_{l} \ \ \ \ \text{and}\ \ \ \
\forall l\geq 1\  \exists n\geq 1\ \ \  m_{l} | k_{n}.
\end{equation}

Suppose that $\mathcal{A}_{1}\cong \mathcal{A}_{2}$. Then
by Proposition~\ref{profun},
$\mathcal{B}(\mathcal{A}_{1})\cong \mathcal{B}(\mathcal{A}_{2})$, i.e.,
 there are premorphisms
$f: \mathcal{B}(\mathcal{A}_{1})\to \mathcal{B}(\mathcal{A}_{2})$
and $g: \mathcal{B}(\mathcal{A}_{2})\to \mathcal{B}(\mathcal{A}_{1})$
such that $gf\sim \mathrm{id}_{\mathcal{B}(\mathcal{A}_{1})}$ and
$fg\sim \mathrm{id}_{\mathcal{B}(\mathcal{A}_{2})}$, and Condition~(\ref{condi}) follows.

Now suppose that $\varepsilon_{\mathcal{A}_{1}}=\varepsilon_{\mathcal{A}_{2}}$;
thus, Condition~(\ref{condi}) is satisfied. In other words, there
are strictly increasing sequences $(f_{n})_{n=1}^{\infty}$ and $(g_{n})_{n=1}^{\infty}$ of non-zero positive integers
such that $k_{n} | m_{f_{n}}$ and $m_{n} | k_{g_{n}}$, for each
$n\geq 1$. Define premorphisms
$f: \mathcal{B}(\mathcal{A}_{1})\to \mathcal{B}(\mathcal{A}_{2})$
and $g: \mathcal{B}(\mathcal{A}_{2})\to \mathcal{B}(\mathcal{A}_{1})$
with $f=((F_{n})_{n=1}^{\infty},(f_{n})_{n=1}^{\infty})$ and
$g=((G_{n})_{n=1}^{\infty},(g_{n})_{n=1}^{\infty})$,
where $F_{n}=(m_{f_{n}}/k_{n})$ and
$G_{n}=(k_{g_{n}}/m_{n})$. It is easy to see that
$gf\sim \mathrm{id}_{\mathcal{B}(\mathcal{A}_{1})}$ and
$fg\sim \mathrm{id}_{\mathcal{B}(\mathcal{A}_{2})}$; thus,
$\mathcal{B}(\mathcal{A}_{1})\cong \mathcal{B}(\mathcal{A}_{2})$.
Therefore by Theorem~\ref{thrst} we have
$\mathcal{A}_{1}\cong \mathcal{A}_{2}$.

Alternatively (not using Theorem~\ref{thrst}---but the
functorial property of Proposition~\ref{profun} is still used in the first half
of the theorem), $A$ and $B$ can be seen each to have the structure of an infinite
tensor product of matrix algebras of prime order, with the multiplicities
of the primes determined by the Bratteli diagram data, from which isomorphism
is immediate if the data is the same.
\end{proof}
\section{A Homomorphism Theorem}
Let us consider further  the  properties of the functor
$\mathcal{B}: \mathbf{AF} \to \mathbf{BD}$.
The following result  may be considered as a generalization of part of Theorem~\ref{thrst}.
Theorem~\ref{thrst} says that isomorphisms in the codomain category can be lifted back to isomorphisms in the domain category, and in particular to homomorphisms. The following theorem
states this for arbitrary homomorphisms. (Theorem~\ref{thrst} cannot be deduced immediately from
Theorem~\ref{thrful}; but see \cite{el10}.)
(The proofs of the two theorems are
similar: roughly speaking, a two-sided and a one-sided intertwining
argument.)

\begin{theorem}\label{thrful}
The functor $\mathcal{B}: \mathbf{AF} \to \mathbf{BD}$ is
a full functor in the sense that if
 $\mathcal{A}_{1},\mathcal{A}_{2}\in\mathbf{AF}$ and
$f: \mathcal{B}(\mathcal{A}_{1})\to \mathcal{B}(\mathcal{A}_{2})$
is a morphism in $\mathbf{BD}$,
then there is a morphism $\varphi : \mathcal{A}_{1}\to \mathcal{A}_{2}$
 in $\mathbf{AF}$
such that $\mathcal{B}(\varphi)=f$.
\end{theorem}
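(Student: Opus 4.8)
The plan is to lift a chosen representative premorphism of $f$ to an honest $*$-homomorphism by a one-sided intertwining argument—essentially the ``downward half'' of the two-sided zig-zag used in the converse of Lemma~\ref{lemeq}. First I would fix notation: write $\mathcal{A}_{1}=(A,(A_{n}),(\varphi_{n}))$, $\mathcal{A}_{2}=(B,(B_{n}),(\psi_{n}))$, $\mathcal{B}(\mathcal{A}_{1})=(V,E)$, $\mathcal{B}(\mathcal{A}_{2})=(W,S)$, and put $\varphi_{n}'=\varphi_{n+1}\varphi_{n}^{-1}$, $\psi_{n}'=\psi_{n+1}\psi_{n}^{-1}$, so that $R_{\varphi_{n}'}=E_{n}$, $R_{\psi_{n}'}=S_{n}$, and $C^{*}(V_{n})=A_{n}'$, $C^{*}(W_{n})=B_{n}'$. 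I would then choose a representative premorphism $f=((F_{n}),(f_{n}))$, so each $F_{n}\colon V_{n}\to W_{f_{n}}$ is a multiplicity matrix, $(f_{n})$ is increasing and cofinal, and the premorphism relation $F_{n+1}E_{n}=S_{f_{n}f_{n+1}}F_{n}$ holds for every $n$.

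Second, I would realize the $F_{n}$ by a ladder of genuine $*$-homomorphisms commuting on the nose. Set $\theta_{1}=h(F_{1})\colon A_{1}'\to B_{f_{1}}'$, and inductively suppose $\theta_{n}\colon A_{n}'\to B_{f_{n}}'$ has been defined with $R_{\theta_{n}}=F_{n}$. The square of multiplicity matrices with sides $E_{n}$, $S_{f_{n}f_{n+1}}$, $F_{n}$, $F_{n+1}$ commutes by the premorphism relation, and $\varphi_{n}'$, $\psi'_{f_{n}f_{n+1}}$, $\theta_{n}$, $h(F_{n+1})$ are $*$-homomorphisms realizing its four sides; hence by Corollary~\ref{corcom} there is a unitary $u_{n}\in B_{f_{n+1}}'$ with $(\mathrm{Ad}\,u_{n})h(F_{n+1})\varphi_{n}'=\psi'_{f_{n}f_{n+1}}\theta_{n}$. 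Setting $\theta_{n+1}=(\mathrm{Ad}\,u_{n})h(F_{n+1})$ gives $R_{\theta_{n+1}}=F_{n+1}$ and the commuting square $\theta_{n+1}\varphi_{n}'=\psi'_{f_{n}f_{n+1}}\theta_{n}$. This unitary correction is the crux: a morphism of Bratteli diagrams records multiplicities only—i.e.\ homomorphisms up to unitary equivalence—and Corollary~\ref{corcom} is precisely what turns the ``up to a unitary'' commutativity into genuine commutativity. This is also where the one-sided nature helps, since, unlike the isomorphism case, there is no return leg to reconcile, so a single application at each step suffices.

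Third, I would pass to the limit. Because $(f_{n})$ is cofinal and the maps $\psi'$ are injective, $\varinjlim(B_{f_{n}}',\psi'_{f_{n}f_{n+1}})\cong\varinjlim(B_{n}',\psi_{n}')\cong B$ and $\varinjlim(A_{n}',\varphi_{n}')\cong A$; equivalently, one checks directly that the maps $\psi_{f_{n}}^{-1}\theta_{n}\varphi_{n}\colon A_{n}\to B_{f_{n}}\subseteq B$ are compatible with the inclusions $A_{n}\hookrightarrow A_{n+1}$, using the telescoping identities $\psi_{f_{n+1}}^{-1}\psi'_{f_{n}f_{n+1}}=\psi_{f_{n}}^{-1}$ and $\varphi_{n}'\varphi_{n}=\varphi_{n+1}\!\upharpoonright_{A_{n}}$ together with the commuting squares just built. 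They therefore assemble into a $*$-homomorphism on the dense subalgebra $\bigcup_{n}A_{n}$ which, being contractive, extends to a $*$-homomorphism $\varphi\colon A\to B$, that is, a morphism in $\mathbf{AF}$.

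Finally I would verify $\mathcal{B}(\varphi)=f$. Since $\varphi(A_{n})=\psi_{f_{n}}^{-1}\theta_{n}(A_{n}')\subseteq B_{f_{n}}$, in Definition~\ref{deffu} one may take the unitaries to be $1$ and the positive integers to be $f_{n}$; then $g_{n}=\varphi\!\upharpoonright_{A_{n}}=\psi_{f_{n}}^{-1}\theta_{n}\varphi_{n}$, so $\eta_{n}=\psi_{f_{n}}g_{n}\varphi_{n}^{-1}=\theta_{n}$ and $R_{\eta_{n}}=F_{n}$. Hence $\mathcal{B}(\varphi)$ is represented by $((F_{n}),(f_{n}))=f$, and the well-definedness established in Proposition~\ref{profun} yields $\mathcal{B}(\varphi)=f$. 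I expect the only points needing care to be the bookkeeping that conjugation preserves the commutativity of the successive squares and the compatibility computation in the limit; the one genuinely essential idea is the single unitary adjustment furnished by Corollary~\ref{corcom}.
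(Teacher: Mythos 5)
Your proof is correct and follows essentially the same route as the paper's: realize each $F_{n}$ by a $*$-homomorphism, use Corollary~\ref{corcom} inductively to correct by unitaries (harmless by Lemma~\ref{lemun}) so that the one-sided intertwining ladder commutes exactly, pass to the limit on the dense union, and verify $\mathcal{B}(\varphi)=f$ via Definition~\ref{deffu} with trivial unitaries. The only difference is cosmetic---you work with the model algebras $A_{n}'$, $B_{f_{n}}'$ and transport back through $\varphi_{n}$, $\psi_{f_{n}}$, whereas the paper adjusts the maps $g_{n}\colon A_{n}\to B_{f_{n}}$ directly---and your final verification is, if anything, spelled out more carefully than the paper's.
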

\begin{proof}
Let $\mathcal{A}_{1},\mathcal{A}_{2}\in\mathbf{AF}$ and
$f: \mathcal{B}(\mathcal{A}_{1})\to \mathcal{B}(\mathcal{A}_{2})$
be a morphism in $\mathbf{BD}$. Write
$\mathcal{A}_{1}=(A, (A_{n})_{n=1}^{\infty},(\varphi_{n})_{n=1}^{\infty})$,
$\mathcal{A}_{2}=(B, (B_{n})_{n=1}^{\infty},(\psi_{n})_{n=1}^{\infty})$,
$\mathcal{B}(\mathcal{A}_{1})=((V_{n})_{n=1}^{\infty}, (E_{n})_{n=1}^{\infty})$, and
$\mathcal{B}(\mathcal{A}_{2})=((W_{n})_{n=1}^{\infty}, (S_{n})_{n=1}^{\infty}))$.
Suppose that $f$ is the equivalence class of the premorphism
$((F_{n})_{n=1}^{\infty},(f_{n})_{n=1}^{\infty})$, according to
Definition~\ref{defpre}.
Thus, each $F_{n}$ is a  multiplicity  matrix from $V_{n}$ to $W_{f_{n}}$ and the following diagram commutes:

\[
\xymatrix{V_{1}\ar[r]^{E_{1}}\ar[d]_{F_{1}}
 &V_{2}\ar[r]^-{E_{2}}\ar[d]_{F_{2}} &V_{3}\ar[r]^-{E_{3}}\ar[ld]^{F_{3}} &\cdots \\
 W_{1}\ar[r]_{S_{1}}
 &W_{2}\ar[r]_{S_{2}} &W_{3}\ar[r]_{S_{3}}&\cdots
\ . }
\]

By Theorem~\ref{thrfd},
there is a $*$-homomorphism
$g_{n}:A_{n}\to B_{f_{n}}$ with multiplicity matrix  $F_{n}$, i.e.,
$R_{g_{n}}=F_{n}$, $n\geq 1$, and we have the following (a priori non-commutative) diagram:

\[
\xymatrix{A_{1}\ar@{^{(}->}[r]\ar[d]_{g_{1}}
 &A_{2}\ar@{^{(}->}[r]\ar[d]_{g_{2}} &A_{3}\ar@{^{(}->}[r]\ar[ld]^{g_{3}} &\cdots \\
 B_{1}\ar@{^{(}->}[r]
 &B_{2}\ar@{^{(}->}[r] &B_{3}\ar@{^{(}->}[r]&\cdots\ .
 }
\]

\noindent Using Corollary~\ref{corcom},
we can replace $g_{2}$ with $(\mathrm{Ad}\,{u_{2}})g_{2}$,
for some unitary $u_{2}\in B_{2}$,
such that the first left square is commutative. Since unitaries do not change  multiplicity matrices (Lemma~\ref{lemun}),
one can continue this procedure to obtain unitaries
$(u_{n})_{n\geq 2}$ such that the above diagram is commutative when each $g_{n}$
is replaced by $(\mathrm{Ad}\,{u_{n}})g_{n}$ ($n\geq 2$). Therefore there is a
$*$-homomorphism $\varphi : A\to B$ such that
$\varphi\upharpoonright_{A_{1}}=g_{1}$
and $\varphi\upharpoonright_{A_{n}}=(\mathrm{Ad}\,{u_{n}})g_{n}$,
for each $n\geq 2$. Using Lemma~\ref{lemun},
we have $R_{(\mathrm{Ad}\,{u_{n}})g_{n}}=F_{n}$, $n\geq 2$. Therefore,
 $\mathcal{B}(\varphi)=f$.
\end{proof}
\begin{proposition}\label{proonto}
Let  $B=((V_{n})_{n=1}^{\infty},(E_{n})_{n=1}^{\infty})$
be a  Bratteli diagram. Then there is an
$\mathcal{A}\in\mathbf{AF}$ such that
$ \mathcal{B}(\mathcal{A})=B$.
\end{proposition}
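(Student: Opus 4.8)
The plan is to realize $B$ directly as the Bratteli diagram of the inductive limit built from the matrix data. First I would attach to the embedding matrices their canonical $*$-homomorphisms: for each $n$ set $A_{n}'=C^{*}(V_{n})=\mathcal{M}_{m_{n1}}\oplus\cdots\oplus\mathcal{M}_{m_{nk_{n}}}$, where $V_{n}^{T}=(m_{n1}\ \cdots\ m_{nk_{n}})$, and let $h(E_{n}):C^{*}(V_{n})\to C^{*}(V_{n+1})$ be the canonical map of the remark following Lemma~\ref{lempr}. Since each $E_{n}$ is by hypothesis an \emph{embedding} matrix, each $h(E_{n})$ is injective, and by the very construction of $h(\cdot)$ one has $R_{h(E_{n})}=E_{n}$.

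Next I would form the inductive limit $A=\varinjlim\bigl(C^{*}(V_{n}),h(E_{n})\bigr)$ with canonical maps $\mu_{n}:C^{*}(V_{n})\to A$. Because all connecting maps $h(E_{n})$ are injective, each $\mu_{n}$ is injective, so $A_{n}:=\mu_{n}(C^{*}(V_{n}))$ is a finite dimensional C$^{*}$-subalgebra of $A$; the relation $\mu_{n+1}h(E_{n})=\mu_{n}$ gives $A_{n}\subseteq A_{n+1}$, and $\bigcup_{n}A_{n}$ is dense in $A$ by construction of the limit, so $A$ is an AF algebra. Taking $\varphi_{n}:A_{n}\to C^{*}(V_{n})$ to be the inverse of the isomorphism $\mu_{n}:C^{*}(V_{n})\to A_{n}$, I obtain a triple $\mathcal{A}=(A,(A_{n})_{n=1}^{\infty},(\varphi_{n})_{n=1}^{\infty})\in\mathbf{AF}$ in the sense of Definition~\ref{defaf}.

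Finally I would verify $\mathcal{B}(\mathcal{A})=B$ directly from Definition~\ref{deffu}. The block-size column matrix of $A_{n}'=C^{*}(V_{n})$ is exactly $V_{n}$, so the first coordinate of $\mathcal{B}(\mathcal{A})$ is $(V_{n})_{n=1}^{\infty}$. For the connecting maps, $\varphi_{n}^{-1}=\mu_{n}$ and $\varphi_{n+1}=\mu_{n+1}^{-1}$, whence $\varphi_{n}'=\varphi_{n+1}\varphi_{n}^{-1}=\mu_{n+1}^{-1}\mu_{n}=h(E_{n})$, using $\mu_{n+1}h(E_{n})=\mu_{n}$ once more. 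Therefore the multiplicity matrix of $\varphi_{n}'$ is $R_{h(E_{n})}=E_{n}$, so the second coordinate of $\mathcal{B}(\mathcal{A})$ is $(E_{n})_{n=1}^{\infty}$, giving $\mathcal{B}(\mathcal{A})=B$.

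There is essentially no serious obstacle; the argument is routine. The one point requiring care is that $\mathcal{B}$ must return $B$ \emph{exactly}, not merely up to isomorphism in $\mathbf{BD}$: this is precisely why I use the canonical maps $h(E_{n})$, whose multiplicity matrices equal $E_{n}$ by definition, and the standard identifications $A_{n}'=C^{*}(V_{n})$, so that no unitary adjustment enters and $R_{\varphi_{n}'}$ comes out equal to $E_{n}$ on the nose. The degenerate case of the zero diagram is handled separately by taking $A=0$.
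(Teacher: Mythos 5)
Your proof is correct and takes essentially the same route as the paper's own: both set $A_{n}'=C^{*}(V_{n})$, form $A=\varinjlim\bigl(C^{*}(V_{n}),h(E_{n})\bigr)$ using the canonical maps $h(E_{n})$ (injective because each $E_{n}$ is an embedding matrix), define $\varphi_{n}$ as the inverse of the induced isomorphism onto the image $A_{n}$, and conclude $\mathcal{B}(\mathcal{A})=B$ since $R_{\varphi_{n}'}=R_{h(E_{n})}=E_{n}$ exactly. Your explicit check that $\varphi_{n}'=\mu_{n+1}^{-1}\mu_{n}=h(E_{n})$ and your note on the zero diagram merely spell out steps the paper leaves implicit.
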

\begin{proof}
By Definition~\ref{defbd},  each $E_{n}$ is an embedding matrix.
Set $A_{n}'=C^{*}(V_{n})$ and $\varphi_{n}'=h(E_{n})$, for each $n\geq 1$.
 (See the remark following
Lemma~\ref{lempr}
for the notations  $C^{*}(V_{n})$ and $h(E_{n})$.)
By Theorem~\ref{thrfd},
$h_{n}: A_{n}'\to A_{n+1}'$ is injective. Now set
$A=\varinjlim (A_{n}',h_{n})$ and let $\alpha^{n}:A_{n}'\to A$
 denote the $*$-homomorphism that comes
from the construction of the direct limit, $n\geq 1$.
Set $A_{n}=\alpha^{n}(A_{n}')$ and denote by $\varphi_{n}: A_{n}\to A_{n}'$
 the inverse of $\alpha^{n}:A_{n}'\to A_{n}$ (which exists, since each
$h_{n}$ is injective). Now
$\mathcal{A}=(A, (A_{n})_{n=1}^{\infty},(\varphi_{n})_{n=1}^{\infty})
\in\mathbf{AF}$ and
$ \mathcal{B}(\mathcal{A})=B$.
\end{proof}
 Let us denote by $\mathbf{AF}_{1}$ the subcategory of $\mathbf{AF}$
whose objects are unital AF~algebras and whose morphisms are unital
homomorphisms; more precisely,
$(A, (A_{n})_{n=1}^{\infty},(\varphi_{n})_{n=1}^{\infty})\in\mathbf{AF} $
is an object of  $\mathbf{AF}_{1}$ if $A$ is unital and each $A_{n}$
contains the unit of $A$. The next proposition follows from part (2) of Theorem~\ref{thrfd}.

\begin{proposition}
Let $\mathcal{A}\in\mathbf{AF}$ and
$ \mathcal{B}(\mathcal{A})=(V,E)$. Then
$\mathcal{A}$ is in $\mathbf{AF}_{1}$ if and only if $E_{n}V_{n}=V_{n+1}$,
for each $n\in \N$.
\end{proposition}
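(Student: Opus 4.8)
The plan is to reduce the statement to part~(2) of Theorem~\ref{thrfd}, using the observation that the condition $E_{n}V_{n}=V_{n+1}$ says exactly that the connecting $*$-homomorphism $\varphi_{n}'=\varphi_{n+1}\varphi_{n}^{-1}\colon A_{n}'\to A_{n+1}'$ is unital (here $E_{n}=R_{\varphi_{n}'}$ and $V_{n},V_{n+1}$ are the associated column matrices). So the genuine content is to show that $\mathcal{A}\in\mathbf{AF}_{1}$ if and only if every $\varphi_{n}'$ is unital, which in turn is the same as saying that every inclusion $\iota_{n}\colon A_{n}\hookrightarrow A_{n+1}$ carries $1_{A_{n}}$ to $1_{A_{n+1}}$.

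First I would record the dictionary between the two pictures. Each $A_{n}$ is finite dimensional, hence unital with its own unit $1_{A_{n}}$, and the $*$-isomorphisms $\varphi_{n}$ preserve units, so $\varphi_{n}(1_{A_{n}})=1_{A_{n}'}$. Since $\varphi_{n}'=\varphi_{n+1}\circ\iota_{n}\circ\varphi_{n}^{-1}$ (this is what the notation $\varphi_{n+1}\varphi_{n}^{-1}$ abbreviates, with $\varphi_{n}^{-1}$ landing in $A_{n}\subseteq A_{n+1}$), injectivity of $\varphi_{n+1}$ gives that $\varphi_{n}'(1_{A_{n}'})=1_{A_{n+1}'}$ if and only if $\iota_{n}(1_{A_{n}})=1_{A_{n+1}}$, i.e.\ if and only if $1_{A_{n}}=1_{A_{n+1}}$. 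Combined with Theorem~\ref{thrfd}(2) applied to $\varphi_{n}'$, this shows that $E_{n}V_{n}=V_{n+1}$ holds if and only if $1_{A_{n}}=1_{A_{n+1}}$.

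Next I would treat the two directions of the resulting equivalence. For the forward direction, if $\mathcal{A}\in\mathbf{AF}_{1}$ then $1_{A}\in A_{n}$ for each $n$, and since $1_{A}$ acts as a unit on all of $A\supseteq A_{n}$ it must be the unit of $A_{n}$; hence $1_{A_{n}}=1_{A}$ for every $n$, all the units coincide, and $E_{n}V_{n}=V_{n+1}$ holds for each $n$. For the converse, if $E_{n}V_{n}=V_{n+1}$ for all $n$, then $1_{A_{1}}=1_{A_{2}}=\cdots$; call this common element $e$. It lies in each $A_{n}$ and acts as a two-sided unit on $\bigcup_{n}A_{n}$, so by continuity of multiplication it is a unit for the closure $A=\overline{\bigcup_{n}A_{n}}$. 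Thus $A$ is unital with $1_{A}=e\in A_{n}$ for every $n$, i.e.\ $\mathcal{A}\in\mathbf{AF}_{1}$.

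I do not expect a serious obstacle here; the only step needing a little care is the converse, where one passes from the algebraic fact that $e$ is a unit on the dense subalgebra $\bigcup_{n}A_{n}$ to the conclusion that $e$ is a unit on $A$, which is immediate from the joint continuity of multiplication in a C$^*$-algebra. Everything else is a direct application of Theorem~\ref{thrfd}(2) together with the fact that $*$-isomorphisms between finite dimensional C$^*$-algebras are unital.
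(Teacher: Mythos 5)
Your proof is correct and follows exactly the route the paper intends: the paper disposes of this proposition with the single remark that it ``follows from part~(2) of Theorem~\ref{thrfd}'', and your argument is precisely the careful elaboration of that remark---translating $E_{n}V_{n}=V_{n+1}$ into unitality of $\varphi_{n}'$, hence into $1_{A_{n}}=1_{A_{n+1}}$, and then passing between coincidence of the units and membership of $1_{A}$ in each $A_{n}$ via density and continuity of multiplication. No gaps; the one step you flagged (extending the unit from the dense union to its closure) is handled correctly.
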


Next we give a criterion to check if the functor
$\mathcal{B}$ sends two AF~algebras to the same diagram. The proof is straightforward.

\begin{proposition}\label{proinj}
Let $\mathcal{A}_{1}$ and $\mathcal{A}_{2}$ be in $\mathbf{AF}$ with
$\mathcal{A}_{1}=(A, (A_{n})_{n=1}^{\infty},(\varphi_{n})_{n=1}^{\infty})$ and
$\mathcal{A}_{2}=(B, (B_{n})_{n=1}^{\infty},(\psi_{n})_{n=1}^{\infty})$.
Then
$\mathcal{B}(\mathcal{A}_{1})=\mathcal{B}(\mathcal{A}_{2})$ if and only if
there is a $*$-isomorphism
$\varphi :A\to B$ such that $\varphi(A_{n})=B_{n}$,  and
the  multiplicity matrix of
$\varphi\upharpoonright_{A_{n}}: A_{n} \to B_{n}$
is the identity, for each $n\geq 1$.
\end{proposition}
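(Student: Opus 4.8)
The plan is to translate the equality $\mathcal{B}(\mathcal{A}_{1})=\mathcal{B}(\mathcal{A}_{2})$ into matrix data and then construct the required $*$-isomorphism by an exact intertwining (commuting-ladder) argument based on Corollary~\ref{corcom}. Writing $\mathcal{B}(\mathcal{A}_{1})=((V_{n}),(E_{n}))$ and $\mathcal{B}(\mathcal{A}_{2})=((W_{n}),(S_{n}))$ as in Definition~\ref{deffu}, equality of the two Bratteli diagrams means precisely that $V_{n}=W_{n}$ and $E_{n}=S_{n}$ for every $n$; in particular $A_{n}'=C^{*}(V_{n})=C^{*}(W_{n})=B_{n}'$ as concrete finite-dimensional C$^{*}$-algebras, and the standard-form connecting maps $\varphi_{n}'=\varphi_{n+1}\varphi_{n}^{-1}$ and $\psi_{n}'=\psi_{n+1}\psi_{n}^{-1}$ have the same multiplicity matrix $E_{n}=S_{n}$.

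For the ``if'' direction, suppose $\varphi$ exists with $\varphi(A_{n})=B_{n}$ and $R_{\varphi\upharpoonright_{A_{n}}}=I$ for each $n$. Since a single $*$-isomorphism $\varphi$ is being restricted, the square with horizontal maps the inclusions $A_{n}\hookrightarrow A_{n+1}$ and $B_{n}\hookrightarrow B_{n+1}$ and vertical maps the restrictions of $\varphi$ commutes. Recall that, with respect to the fixed isomorphisms $\varphi_{n},\varphi_{n+1}$, the inclusion $A_{n}\hookrightarrow A_{n+1}$ has multiplicity matrix $E_{n}$, and likewise $B_{n}\hookrightarrow B_{n+1}$ has matrix $S_{n}$. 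Applying the multiplicativity of $R$ (Lemma~\ref{lempr}) along the two paths of this square gives $I\cdot E_{n}=S_{n}\cdot I$, so $E_{n}=S_{n}$; and the hypothesis that each $R_{\varphi\upharpoonright_{A_{n}}}$ is a (square) identity matrix forces $s_{i}=0$ and matching block sizes, i.e.\ $V_{n}=W_{n}$. Hence $\mathcal{B}(\mathcal{A}_{1})=\mathcal{B}(\mathcal{A}_{2})$.

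For the converse, assume $V_{n}=W_{n}$ and $E_{n}=S_{n}$. I would construct $*$-isomorphisms $\theta_{n}:A_{n}'\to B_{n}'$ with $R_{\theta_{n}}=I$ intertwining the two inductive systems, that is, $\theta_{n+1}\varphi_{n}'=\psi_{n}'\theta_{n}$, by induction on $n$. Put $\theta_{1}=\mathrm{id}_{A_{1}'}$, which is legitimate since $A_{1}'=B_{1}'$. Given $\theta_{n}$, the square of multiplicity matrices $I\cdot E_{n}=S_{n}\cdot I$ commutes, so Corollary~\ref{corcom}, applied with the homomorphisms $\varphi_{n}'$, $\psi_{n}'$, $\theta_{n}$ and the identity map $A_{n+1}'\to B_{n+1}'$, yields a unitary $u\in B_{n+1}'$ with $(\mathrm{Ad}\,u)\varphi_{n}'=\psi_{n}'\theta_{n}$; set $\theta_{n+1}=\mathrm{Ad}\,u$, which has $R_{\theta_{n+1}}=I$ by Lemma~\ref{lemun}. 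Because the resulting ladder commutes exactly and every $\theta_{n}$ is an isomorphism, it induces a $*$-isomorphism $\varinjlim(A_{n}',\varphi_{n}')\to\varinjlim(B_{n}',\psi_{n}')$; transporting through the identifications $A\cong\varinjlim(A_{n}',\varphi_{n}')$ and $B\cong\varinjlim(B_{n}',\psi_{n}')$ determined by $(\varphi_{n})$ and $(\psi_{n})$ produces a $*$-isomorphism $\varphi:A\to B$ with $\varphi\upharpoonright_{A_{n}}=\psi_{n}^{-1}\theta_{n}\varphi_{n}$. This restriction maps $A_{n}$ onto $B_{n}$, and its multiplicity matrix is $R_{\psi_{n}(\varphi\upharpoonright_{A_{n}})\varphi_{n}^{-1}}=R_{\theta_{n}}=I$, as required.

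The only real point to watch is that the ladder be made to commute on the nose rather than merely up to unitary conjugation, so that the limit map is a genuine isomorphism with identity multiplicity matrices at each finite stage; this is exactly what Corollary~\ref{corcom} delivers, absorbing the ambiguity of the lift into the unitary $u$ defining $\theta_{n+1}$. All remaining verifications are routine bookkeeping with the fixed standard forms $\varphi_{n},\psi_{n}$.
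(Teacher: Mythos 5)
Your proof is correct. The paper itself offers no argument here---it simply declares the proof ``straightforward''---so there is nothing to compare against line by line; but your argument is exactly the one the paper's machinery is set up to deliver. The ``if'' direction (reading $E_{n}=S_{n}$ off the commuting square of inclusions and restrictions via Lemma~\ref{lempr}, and $V_{n}=W_{n}$ from the identity multiplicity matrix together with the fact that an isomorphism is unital, so Theorem~\ref{thrfd}(2) forces $s_{i}=0$) is sound, and your converse is precisely the one-sided exact-intertwining construction that the paper carries out in the converse half of Lemma~\ref{lemeq}, specialized to the case where both diagrams coincide: Corollary~\ref{corcom} supplies the unitary $u$ making each square of the ladder commute on the nose, Lemma~\ref{lemun} keeps $R_{\theta_{n+1}}=I$, and the exact ladder of isomorphisms passes to the inductive limit, with the consistency check $\psi_{n+1}^{-1}\theta_{n+1}\varphi_{n+1}\upharpoonright_{A_{n}}=\psi_{n}^{-1}\theta_{n}\varphi_{n}$ following from $\theta_{n+1}\varphi_{n}'=\psi_{n}'\theta_{n}$ as you indicate. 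In short, you have filled in the omitted proof using the paper's own lemmas, in the manner the authors evidently intended.
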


The following theorem (essentially due to Bratteli) gives a combinatorial criterion for isomorphism
of Bratteli diagrams.
\begin{theorem}\label{thrcong}
Let $B=(V,E)$ and $C=(W,S)$ be two Bratteli diagrams. Then $B\cong C$ in $\mathbf{BD}$
 if and only if there is a third Bratteli diagram $D=(Z,T)$, which is constructed from two subsequences of $B$ and $C$ as follows.
There are positive integers $(r_{k})_{k=1}^{\infty}$ and $(t_{k})_{k=1}^{\infty}$
 with
$r_{1}<t_{1}<r_{2}<t_{2}<\cdots$
such that  $Z_{2k-1}=V_{r_{k}}$, $Z_{2k}=W_{t_{k}}$,
$T_{2k-1,2k+1}=E_{r_{k}r_{k+1}}$, and
$T_{2k,2k+2}=S_{t_{k}t_{k+1}}$, for each $k\geq 1$, i.e.,
the following diagram is commutative:
\end{theorem}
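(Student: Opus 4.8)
The plan is to reduce the statement to Lemma~\ref{lemeq} by representing both $B$ and $C$ as Bratteli diagrams of AF~algebras. By Proposition~\ref{proonto} I would first choose objects $\mathcal{A}_{1},\mathcal{A}_{2}\in\mathbf{AF}$ with $\mathcal{B}(\mathcal{A}_{1})=B$ and $\mathcal{B}(\mathcal{A}_{2})=C$. By Corollary~\ref{coreq}, $B\cong C$ in $\mathbf{BD}$ if and only if $\mathcal{A}_{1}\cong\mathcal{A}_{2}$ in $\mathbf{AF}$, so the task becomes the translation of the AF-algebra isomorphism criterion into the combinatorial data of the theorem.

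Next I would invoke Lemma~\ref{lemeq} applied to $\mathcal{A}_{1}$ and $\mathcal{A}_{2}$: the isomorphism $\mathcal{A}_{1}\cong\mathcal{A}_{2}$ holds precisely when there are sequences $(r_{k})_{k=1}^{\infty}$ and $(t_{k})_{k=1}^{\infty}$ with $r_{1}<t_{1}<r_{2}<t_{2}<\cdots$ and multiplicity matrices $R_{k}\colon V_{r_{k}}\to W_{t_{k}}$, $T_{k}\colon W_{t_{k}}\to V_{r_{k+1}}$ making the interleaving zigzag commute, i.e.\ $T_{k}R_{k}=E_{r_{k}r_{k+1}}$ and $R_{k+1}T_{k}=S_{t_{k}t_{k+1}}$ for each $k$. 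It then remains only to match this zigzag with the Bratteli diagram $D=(Z,T)$ of the statement. One direction is immediate: from a diagram $D$ with $Z_{2k-1}=V_{r_{k}}$ and $Z_{2k}=W_{t_{k}}$ I read off the single-step edges, set $R_{k}=T_{2k-1}$ and $T_{k}=T_{2k}$, and note $T_{k}R_{k}=T_{2k-1,2k+1}=E_{r_{k}r_{k+1}}$ and $R_{k+1}T_{k}=T_{2k,2k+2}=S_{t_{k}t_{k+1}}$; since embedding matrices are in particular multiplicity matrices, this is exactly the hypothesis of Lemma~\ref{lemeq}, whence $\mathcal{A}_{1}\cong\mathcal{A}_{2}$ and so $B\cong C$.

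The converse assembly is the one point requiring attention, and I expect it to be the only real obstacle: given the zigzag with multiplicity matrices $R_{k},T_{k}$, I must check that interleaving them produces a \emph{genuine} Bratteli diagram, that is, that each $R_{k}$ and each $T_{k}$ is an \emph{embedding} matrix and not merely a multiplicity matrix. Here I would use that $E_{r_{k}r_{k+1}}=E_{r_{k+1}-1}\cdots E_{r_{k}}$ is a product of embedding matrices and hence itself an embedding matrix, via a short nonnegativity argument: if $P,Q$ have nonnegative integer entries and every column of each of $P,Q$ contains a nonzero entry, then so does every column of $PQ$. Since $T_{k}R_{k}=E_{r_{k}r_{k+1}}$ is then an embedding matrix and all entries are nonnegative integers, a nonzero entry in a column of $T_{k}R_{k}$ forces a nonzero entry in the corresponding column of $R_{k}$, so $R_{k}$ is an embedding matrix; symmetrically $R_{k+1}T_{k}=S_{t_{k}t_{k+1}}$ being an embedding matrix forces $T_{k}$ to be one. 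The multiplicity inequalities $R_{k}V_{r_{k}}\le W_{t_{k}}$ and $T_{k}W_{t_{k}}\le V_{r_{k+1}}$ already hold by definition of $R_{k},T_{k}$, so setting $Z_{2k-1}=V_{r_{k}}$, $Z_{2k}=W_{t_{k}}$, $T_{2k-1}=R_{k}$, $T_{2k}=T_{k}$ yields a bona fide Bratteli diagram $D$ realizing the displayed commutative interleaving. Combining the two directions with Corollary~\ref{coreq} gives the stated equivalence.
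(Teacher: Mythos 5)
Your proposal is correct and follows the same overall route as the paper: realize $B$ and $C$ as $\mathcal{B}(\mathcal{A}_{1})$ and $\mathcal{B}(\mathcal{A}_{2})$ via Proposition~\ref{proonto}, translate $B\cong C$ into $\mathcal{A}_{1}\cong\mathcal{A}_{2}$ via Corollary~\ref{coreq}, and convert back and forth through the zigzag of Lemma~\ref{lemeq}. The one place you genuinely diverge is the verification, in the forward direction, that the matrices $R_{k}$ and $T_{k}$ supplied by Lemma~\ref{lemeq} are embedding matrices rather than mere multiplicity matrices, which is what makes $D=(Z,T)$ an actual Bratteli diagram. The paper settles this by reopening the \emph{proof} of Lemma~\ref{lemeq}: there $R_{k}=R_{\varepsilon_{k}}$ and $T_{k}=R_{\delta_{k}}$ for the injective $*$-homomorphisms $\varepsilon_{k}$, $\delta_{k}$ built from the isomorphism $\varphi$, so part (1) of Theorem~\ref{thrfd} applies. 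You instead argue combinatorially from the \emph{statement} of Lemma~\ref{lemeq} alone: since all entries are nonnegative, a zero column in $R_{k}$ would force a zero column in $T_{k}R_{k}=E_{r_{k}r_{k+1}}$, which is a product of embedding matrices and hence (by your correct observation about products of nonnegative matrices with no zero columns) itself an embedding matrix; symmetrically, $R_{k+1}T_{k}=S_{t_{k}t_{k+1}}$ forces $T_{k}$ to be an embedding matrix. Both arguments are valid. Yours is slightly more self-contained and in fact proves something a bit stronger: any multiplicity-matrix zigzag satisfying the two product identities automatically consists of embedding matrices, regardless of how it was produced, so the theorem needs no memory of the operator-algebraic construction inside Lemma~\ref{lemeq}; the price is only the easy auxiliary fact about products, whereas the paper's shortcut depends on the internals of an earlier proof.
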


\[
\xymatrix{
 V_{r_{\resizebox{.28em}{.28em}{1}}}
 \ar[r]^{T_{1}}
 \ar@/^2pc/[rr]^{E_{r_{\resizebox{.26em}{.26em}{1}}r_{\resizebox{.26em}{.26em}{2}}}}&
 W_{t_{\resizebox{.28em}{.28em}{1}}}
 \ar@/_2pc/[rr]_{S_{t_{\resizebox{.26em}{.26em}{1}}t_{\resizebox{.26em}{.26em}{2}}}}
 \ar[r]^{T_{2}}
 & V_{r_{\resizebox{.28em}{.28em}{2}}}
 \ar@/^2pc/[rr]^{E_{r_{\resizebox{.26em}{.26em}{2}}r_{\resizebox{.26em}{.26em}{3}}}}
 \ar[r]^{T_{3}} &
 W_{t_{\resizebox{.28em}{.28em}{2}}}
 \ar@/_2pc/[rr]_{S_{t_{\resizebox{.26em}{.26em}{2}}t_{\resizebox{.26em}{.26em}{3}}}}
 \ar[r]^{T_{4}} &
 V_{r_{\resizebox{.28em}{.28em}{3}}}
 \ar[r]^{T_{5}} &
 W_{t_{\resizebox{.28em}{.28em}{3}}}\ar[r] & \cdots\ .
 }
\]
\begin{proof}
Choose
$\mathcal{A}_{1}$ and $\mathcal{A}_{2}$ in $\mathbf{AF}$ such that
$\mathcal{B}(\mathcal{A}_{1})=B$ and
 $\mathcal{B}(\mathcal{A}_{2})=C$, as in Proposition~\ref{proonto}.
 We have $B\cong C$ if and only if
 $\mathcal{A}_{1}\cong \mathcal{A}_{2}$, by Corollary~\ref{coreq}.
 Now the statement follows from
Lemma~\ref{lemeq}, on observing that in the proof of this lemma, the  multiplicity matrices $R_{k}$ and $T_{k}$  are
indeed embedding matrices.
\end{proof}
\begin{proposition}\label{prolift}
Suppose that $\varphi: \mathcal{A}_{1}\to \mathcal{A}_{2}$ is a morphism in
$\mathbf{AF}$ and
$((F_{n})_{n=1}^{\infty},(f_{n})_{n=1}^{\infty})$
 is an arbitrary premorphism whose equivalence class is
 $\mathcal{B}(\varphi)$. Write
 $\mathcal{B}(\mathcal{A}_{1})=(V,E)$ and
 $\mathcal{B}(\mathcal{A}_{2})=(W,S)$.
 \begin{itemize}
 \item[(1)]
 $\varphi$ is injective if and only if each $F_{n}$ is an embedding matrix;
 \item[(2)]
 if $\mathcal{A}_{1},\mathcal{A}_{2}\in \mathbf{AF}_{1}$, then
 $\varphi$ is unital if and only if $F_{n}V_{n}=W_{f_{n}}$, $n\geq 1$.
 \end{itemize}
\end{proposition}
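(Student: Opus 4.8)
The plan is to reduce each of the two equivalences to the corresponding fact about a single $*$-homomorphism between finite-dimensional algebras, namely parts (1) and (2) of Theorem~\ref{thrfd}, applied to a \emph{concrete} representative of $\mathcal{B}(\varphi)$, and then to transfer the conclusion to the given arbitrary premorphism by showing that each of the two relevant combinatorial properties is invariant under the equivalence relation $\sim$.

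First I would treat a representative built as in Definition~\ref{deffu}, so that $F_n=R_{\eta_n}$ with $\eta_n=\psi_{f_n}g_n\varphi_n^{-1}$ and $g_n=(\mathrm{Ad}\,{u_n})\circ\varphi\upharpoonright_{A_n}$. Since $\psi_{f_n}$ and $\varphi_n$ are $*$-isomorphisms and $\mathrm{Ad}\,{u_n}$ is an isomorphism, $\eta_n$ is injective exactly when $\varphi\upharpoonright_{A_n}$ is, and (in the unital case, where $\varphi_n$ and $\psi_{f_n}$ carry units to units and $\mathrm{Ad}\,{u_n}$ fixes $1_B$) $\eta_n$ is unital exactly when $\varphi$ is. By Theorem~\ref{thrfd}(1), $F_n$ is an embedding matrix iff $\eta_n$ is injective; by Theorem~\ref{thrfd}(2), $F_nV_n=W_{f_n}$ iff $\eta_n$ is unital. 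For part (1) I would also note that $\varphi$ is injective iff each $\varphi\upharpoonright_{A_n}$ is: one direction is immediate, and the converse follows since an injective $*$-homomorphism is isometric, so isometry on each $A_n$ forces isometry on the dense union, hence on $A$.

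To pass to an arbitrary representative I would establish two invariance facts. For (1): $F_n$ is an embedding matrix iff $S_{f_n m}F_n$ is (for any $m\geq f_n$), because left multiplication by the embedding matrix $S_{f_n m}$ preserves a nonzero entry in each column and cannot fill a zero column; since equivalent premorphisms satisfy $S_{f_n m}F_n=S_{g_n m}G_n$ for a suitable $m$ (Definition~\ref{defeq2}), the property ``every $F_n$ is an embedding matrix'' is $\sim$-invariant. For (2): multiplying $S_{f_n m}F_n=S_{g_n m}G_n$ on the right by $V_n$ and using that $\mathcal{A}_2\in\mathbf{AF}_1$ gives $S_{ab}W_a=W_b$ (by Theorem~\ref{thrfd}(2) applied to the connecting maps of $\mathcal{A}_2$), one reduces to $S_{g_n m}(W_{g_n}-G_nV_n)=0$ with $W_{g_n}-G_nV_n\geq 0$; the cancellation property of embedding matrices (a nonnegative vector annihilated by an embedding matrix is zero) then forces $G_nV_n=W_{g_n}$, so ``$F_nV_n=W_{f_n}$ for all $n$'' is likewise $\sim$-invariant. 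Combining each invariance fact with the computation for the Definition~\ref{deffu} representative yields both equivalences for the arbitrary premorphism.

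The hard part is exactly this transfer to an arbitrary representative: the clean identity $F_n=R_{\eta_n}$ holds only for the premorphisms produced in Definition~\ref{deffu}, so the crux is the $\sim$-invariance of the two properties rather than the application of Theorem~\ref{thrfd}, which is routine. For part (2) this is also where the hypothesis $\mathcal{A}_1,\mathcal{A}_2\in\mathbf{AF}_1$ is genuinely used, through the relation $S_nW_n=W_{n+1}$ together with the cancellation property of embedding matrices.
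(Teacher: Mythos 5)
Your proof is correct and follows essentially the same route as the paper: first establish (1) and (2) for the canonical representative produced in Definition~\ref{deffu} via parts (1) and (2) of Theorem~\ref{thrfd}, then transfer to an arbitrary representative using Definition~\ref{defeq2} and Proposition~\ref{proeqpre}. The only difference is that you spell out the $\sim$-invariance arguments (left multiplication by an embedding matrix neither creates nor destroys zero columns, and the cancellation $S_{g_{n}m}(W_{g_{n}}-G_{n}V_{n})=0$ with $W_{g_{n}}-G_{n}V_{n}\geq 0$ forces $G_{n}V_{n}=W_{g_{n}}$) that the paper compresses into the phrase ``which they do as shown.''
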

\begin{proof}
First suppose that
$f=((F_{n})_{n=1}^{\infty},(f_{n})_{n=1}^{\infty})$
is the premorphism associated to $\varphi$, as in Definition~\ref{deffu}.
In this case, the statements (1) and (2) follow from the parts (1) and (2) in Theorem~\ref{thrfd}.
Now suppose that
$f=((F_{n})_{n=1}^{\infty},(f_{n})_{n=1}^{\infty})$
is an arbitrary premorphism,  the equivalence class of which is
 $\mathcal{B}(\varphi)$. Let
$g=((G_{n})_{n=1}^{\infty},(g_{n})_{n=1}^{\infty})$
  be a premorphism associated to $\varphi$, as in Definition~\ref{deffu}.
 Applying
 Definition~\ref{defeq2} and Proposition~\ref{proeqpre},
 we see that the statements (1) and (2) hold for $F_{n}$ if and only if
 they hold for $G_{n}$, which they do as shown.
\end{proof}
\section{Relations with Abstract Classifying Categories}
In this section, let us investigate the relation between the following three classifying categories for
AF~algebras: the category of Bratteli diagrams $\mathbf{BD}$,
the abstract category
$\mathbf{AF}^{\mathrm{out}}$, and
the abstract category
$\overline{\mathbf{AF}^{\mathrm{out}}}$ introduced in \cite{el10}.
In the next section we will also consider the category
of dimension groups $\mathbf{DG}$ introduced in \cite{el93}.

In particular, now and in the next section we shall show that the three categories
 $\mathbf{BD}$, $\overline{\mathbf{AF}^{\mathrm{out}}}$, and
 $\mathbf{DG}$
are all equivalent, and hence are classifying categories for each other.

Let us also investigate the relation between the strong classification functors
$\mathcal{B}:\mathbf{AF}\to \mathbf{BD}$,
$\mathcal{F} :\mathbf{AF}\to \mathbf{AF}^{\mathrm{out}}$, and
$\overline{\mathcal{F}} :\mathbf{AF}\to  \overline{\mathbf{AF}^{\mathrm{out}}}$---and in the next section,
 the relation to the functor
$\mathrm{K}_{0}: \mathbf{AF}\to \mathbf{DG}$.

The following lemma may be considered as part of the literature (basically due to Glimm---see below); we  give a proof anyway (cf.~Lemma~4.2 and Theorem~4.3 of \cite{el93}).
\begin{lemma}\label{lemunaf}
For each $\varepsilon>0$ there is a $\delta>0$ such that if
 $A$ is a unital C*-algebra, $B$ is a C*-subalgebra of $A$ containing the unit
of $A$, and $u$ is a unitary of $A$ with $d(u,B)<\delta$, then
there is a unitary $v\in B$ such that $\Vert u-v\Vert<\varepsilon$.
\end{lemma}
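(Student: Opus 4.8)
The plan is to exploit the polar decomposition: from an element of $B$ close to $u$ I will manufacture a unitary of $B$ close to $u$. Given $\varepsilon>0$, I would leave $\delta>0$ to be specified and assume the hypotheses, so that there is some $b\in B$ with $\|u-b\|<\delta$. Since $u$ is unitary and $1\in B$ (the unit of $A$), I would first measure how far $b^*b$ and $bb^*$ are from $1$. Writing $b^*b-1=b^*b-u^*u=b^*(b-u)+(b^*-u^*)u$ and using $\|b\|\le\|u\|+\delta=1+\delta$ gives $\|b^*b-1\|\le\|b\|\,\|b-u\|+\|b-u\|<(2+\delta)\delta$, and symmetrically, from $bb^*-1=b(b^*-u^*)+(b-u)u^*$, one gets $\|bb^*-1\|<(2+\delta)\delta$.

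Next, for $\delta$ small enough that $(2+\delta)\delta<1$, both $b^*b$ and $bb^*$ have distance less than $1$ from $1$ and are therefore invertible; hence $b$ is invertible in $A$. Since $B$ contains the unit of $A$, the element $b^*b$ is invertible in $B$, and its positive square root $|b|=(b^*b)^{1/2}$ together with $|b|^{-1}$ all lie in $B$ by the continuous functional calculus. I would then set $v=b\,|b|^{-1}\in B$ and verify that $v$ is a genuine unitary, not merely an isometry: $v^*v=|b|^{-1}b^*b\,|b|^{-1}=1$, while, using the invertibility of $b$, $vv^*=b(b^*b)^{-1}b^*=b\,b^{-1}(b^*)^{-1}b^*=1$.

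It remains to estimate $\|u-v\|$. I would bound $\|u-v\|\le\|u-b\|+\|b-v\|<\delta+\|b\|\,\|1-|b|^{-1}\|$, and control the last factor by the functional calculus. Setting $\eta=(2+\delta)\delta$, the spectrum of $b^*b$ lies in $[1-\eta,1+\eta]$, so the spectrum of $|b|^{-1}=(b^*b)^{-1/2}$ lies in $[(1+\eta)^{-1/2},(1-\eta)^{-1/2}]$ and hence $\|1-|b|^{-1}\|\le\max\!\big(1-(1+\eta)^{-1/2},\,(1-\eta)^{-1/2}-1\big)$, a quantity tending to $0$ as $\delta\to0$. Consequently $\|u-v\|$ is dominated by an explicit function of $\delta$ that vanishes as $\delta\to0$, so I can fix $\delta>0$ at the outset small enough to force this bound below $\varepsilon$.

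The step I expect to be the main obstacle is guaranteeing that $v$ is a two-sided unitary rather than just an isometry; this is precisely where the invertibility of $b$ is needed, and that in turn rests on the $bb^*$ estimate, which uses the unitarity of $u$ on \emph{both} sides. The rest is routine: the functional-calculus continuity estimate relating $\|b^*b-1\|$ to $\|1-|b|^{-1}\|$, and the observation that $|b|$, $|b|^{-1}$, and $v$ remain inside $B$ because $B$ is a unital C$^*$-subalgebra, so spectra and functional calculus computed in $B$ agree with those computed in $A$.
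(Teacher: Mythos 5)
Your proof is correct and follows essentially the same route as the paper's: both take an element of $B$ within $\delta$ of $u$, show it is invertible, and pass to the unitary part of its polar decomposition inside $B$ (the paper uses $v=(aa^*)^{-1/2}a$ with direct norm estimates giving $\|u-v\|\le 8\|a-u\|$, while you use $v=b(b^*b)^{-1/2}$ with a spectral-mapping bound on $\|1-|b|^{-1}\|$ --- an immaterial difference). Your handling of the two-sided unitarity of $v$ via invertibility of $b$, and of spectral permanence in the unital subalgebra $B$, is exactly right.
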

\begin{proof}
The statement follows from \cite[Lemma~1.9]{gl60}. In fact, in the proof
of \cite[Lemma~1.9]{gl60}, Glimm does not use the assumption of orthogonality of projections. Thus putting $E_1 = E_2 = F_1 = F_2 = 1$ in that lemma, the
statement follows.
There is also a direct proof as follows.
Set $\delta=\min\{\varepsilon, \frac{1}{8}\}$.
Let $A$ be a unital C*-algebra, $B$ be a C*-subalgebra of $A$
containing the unit
of $A$, and $u$ be a unitary of $A$ with $d(u,B)<\delta$.
Thus there is $a\in B$ such that $\Vert u-a\Vert <\delta$. Thus $a$ is invertible.
Set $v=(aa^{*})^{-\frac{1}{2}}a$; hence  $vv^{*}=v^{*}v=1$ and  $v$  is a unitary
in $B$. We have
\begin{align*}
\Vert aa^{*}-1 \Vert
&\leq \Vert aa^{*}- au^{*}\Vert+
\Vert au^{*}-uu^{*} \Vert\\
&\leq
\Vert a\Vert \Vert a^{*}- u^{*}\Vert+
\Vert a-u \Vert\\
&\leq
(\Vert a\Vert+1)\Vert a-u \Vert\\
&\leq
3\Vert a-u \Vert<\frac{1}{2}.
\end{align*}
Thus $\Vert (aa^{*})^{-1}\Vert<2$. Using functional calculus we have
\begin{align*}
\Vert (aa^{*})^{-\frac{1}{2}}-1 \Vert
&\leq \Vert (aa^{*})^{-1}-1 \Vert\\
&\leq \Vert (aa^{*})^{-1}\Vert \Vert aa^{*}-1 \Vert\\
&\leq 2 \Vert aa^{*}-1 \Vert\\
&\leq 6 \Vert a-u \Vert<1.
\end{align*}
Therefore we have
\begin{align*}
\Vert u-v\Vert &\leq \Vert u- (aa^{*})^{-\frac{1}{2}}u\Vert+
\Vert (aa^{*})^{-\frac{1}{2}}u-v \Vert\\
&\leq \Vert 1- (aa^{*})^{-\frac{1}{2}} \Vert +
\Vert (aa^{*})^{-\frac{1}{2}} \Vert \Vert u-a \Vert\\
&\leq 6 \Vert a-u \Vert +2 \Vert a-u \Vert\\
&\leq 8 \Vert a-u \Vert<\varepsilon. \qedhere
\end{align*}
\end{proof}
The following immediate consequence of Lemma~\ref{lemunaf}  enables us to approximate the unitaries of an AF~algebra by
unitaries of an increasing sequence of C*-subalgebras with dense union. We will use this statement in the
proof of Lemma~\ref{lemad}.
\begin{corollary}\label{corunaf}
Let $A=\overline{\bigcup_{n\geq  1}A_{n}}$ be a unital AF algebra where
$(A_{n})_{n=1}^{\infty}$ is an increasing sequence of finite dimensional  C$^{*}$-subalgebras
of $A$ each containing the unit of $A$.
Then $u\in A$ is a unitary if and only if there are unitaries $(u_{n})_{n=1}^{\infty}$
such that $u_{n}\in A_{n}$, $n\geq 1$, and $u_{n}\longrightarrow u$.
\end{corollary}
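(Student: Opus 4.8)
The plan is to prove the two implications separately, with the forward direction carrying all the content. The converse is the easy half: if $u_{n}\in A_{n}$ are unitaries with $u_{n}\to u$, I would invoke only the norm-continuity of multiplication and of the involution to obtain $u^{*}u=\lim_{n}u_{n}^{*}u_{n}=1$ and $uu^{*}=\lim_{n}u_{n}u_{n}^{*}=1$, so that $u$ is a unitary of $A$. Lemma~\ref{lemunaf} is not needed in this direction; it is merely the statement that the unitary group of a unital C*-algebra is norm closed.

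For the forward direction, let $u\in A$ be a unitary and set $d_{n}=d(u,A_{n})$. The first step is to observe that, since the $A_{n}$ are increasing and $\bigcup_{n}A_{n}$ is dense in $A$, the numbers $d_{n}$ are nonincreasing and tend to $0$. This is precisely what makes Lemma~\ref{lemunaf} applicable here: the hypothesis that each $A_{n}$ contains the unit of $A$ guarantees that $A_{n}$ is an admissible subalgebra in that lemma.

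The second step is to assemble a single sequence $(u_{n})$ with $u_{n}\in A_{n}$ unitary and $u_{n}\to u$. I would fix $\varepsilon_{k}=1/k$, let $\delta_{k}>0$ be the constant produced by Lemma~\ref{lemunaf} for $\varepsilon_{k}$ (which may be taken decreasing), and use $d_{n}\to 0$ to choose an increasing sequence $N_{1}<N_{2}<\cdots$ with $d_{n}<\delta_{k}$ for all $n\geq N_{k}$. Then set $u_{n}=1$ for $n<N_{1}$ and, for $N_{k}\leq n<N_{k+1}$, apply Lemma~\ref{lemunaf} (legitimate since $d(u,A_{n})<\delta_{k}$) to obtain a unitary $u_{n}\in A_{n}$ with $\Vert u_{n}-u\Vert<1/k$. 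Since the block index $k$ tends to infinity with $n$, this yields $u_{n}\to u$, completing the proof. The only point that requires a little care---and the closest thing to an obstacle---is arranging the estimate to hold for every $n$ rather than merely along a subsequence, which the blockwise choice above handles; the genuinely substantive step, namely upgrading closeness in distance to an honest nearby unitary lying inside $A_{n}$, is entirely supplied by Lemma~\ref{lemunaf}.
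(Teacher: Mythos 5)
Your proof is correct and is exactly the argument the paper intends: the paper states this corollary without proof as an ``immediate consequence'' of Lemma~\ref{lemunaf}, namely that density of $\bigcup_{n}A_{n}$ forces $d(u,A_{n})\to 0$, after which the lemma supplies nearby unitaries in $A_{n}$, while the converse is just norm-closedness of the unitary group. Your blockwise bookkeeping with $\varepsilon_{k}=1/k$ and thresholds $N_{k}$ is a careful (and valid) way of handling the $\varepsilon$--$\delta$ dependence in the lemma, and your choice $u_{n}=1$ for small $n$ correctly uses the hypothesis $1\in A_{n}$.
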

We shall need the following  technical lemma in the proof of  Lemma~\ref{lemeqbh} and
Corollary~\ref{cordis}.
\begin{lemma}\label{lemad}
Let $B=\overline{\bigcup_{n\geq  1}B_{n}}$ be an AF~algebra where
$(B_{n})_{n=1}^{\infty}$ is an increasing sequence of finite dimensional  C$^{*}$-subalgebras
of $B$. Let $A$ be a finite dimensional  C$^{*}$-algebra and let
$ \varphi,\psi :A\to B$ be  $*$-homomorphisms such that $\Vert  \varphi-\psi\Vert<1$.
Let
$u,v$ be unitaries in $B^{\sim}$ such that
there are positive integers $n$ and $m$ such that
$u \varphi(A)u^{*}\subseteq B_{n}$ and
$v\psi(A)v^{*}\subseteq B_{m}$.
Then there is a positive integer $k\geq n,m$
such that the  $*$-homomorphisms
$(\mathrm{Ad}\,{u}) \varphi, (\mathrm{Ad}\,{v})\psi :A\to B_{k}$ have the  same
 multiplicity matrices, i.e.,
$R_{(\mathrm{Ad}\,{u}) \varphi}=R_{(\mathrm{Ad}\,{v})\psi}$.
\end{lemma}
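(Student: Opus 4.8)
The plan is to reduce everything to the finite-dimensional comparison criterion of Lemma~\ref{lempart}, after arranging that both conjugated homomorphisms land in one common finite-dimensional subalgebra $B_{k}$ and lie within distance $1$ of each other there. The difficulty is that $(\mathrm{Ad}\,{u})\varphi$ and $(\mathrm{Ad}\,{v})\psi$ need not be close: the hypothesis $\Vert\varphi-\psi\Vert<1$ controls $\varphi$ and $\psi$ \emph{before} conjugation, whereas $u$ and $v$ are unrelated unitaries of $B^{\sim}$. The key device is the single unitary $w=uv^{*}\in B^{\sim}$, for which $(\mathrm{Ad}\,{u})\psi=(\mathrm{Ad}\,{w})(\mathrm{Ad}\,{v})\psi$, together with the possibility of approximating $w$ by a unitary sitting inside a finite-dimensional subalgebra.

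First I would set $\varepsilon=1-\Vert\varphi-\psi\Vert>0$ and apply Corollary~\ref{corunaf} to the unital AF~algebra $B^{\sim}=\overline{\bigcup_{j}B_{j}^{\sim}}$ (each $B_{j}^{\sim}=B_{j}+\mathbb{C}1$ being finite dimensional and containing the unit of $B^{\sim}$), choosing an index $p\geq m$ and a unitary $w_{0}\in B_{p}^{\sim}$ with $\Vert w-w_{0}\Vert<\varepsilon/2$. Put $k=\max(n,p)$, so $k\geq n,m$, and regard $\Phi:=(\mathrm{Ad}\,{u})\varphi$ and $\Psi':=(\mathrm{Ad}\,{w_{0}})(\mathrm{Ad}\,{v})\psi$ as $*$-homomorphisms into $B_{k}$. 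This is legitimate: $\Phi(A)\subseteq B_{n}\subseteq B_{k}$ by hypothesis, while writing $w_{0}=c+\mu1$ with $c\in B_{p}$ shows $w_{0}B_{p}w_{0}^{*}\subseteq B_{p}$, so $\Psi'(A)\subseteq w_{0}B_{m}w_{0}^{*}\subseteq B_{p}\subseteq B_{k}$.

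Next I would estimate the distance. Conjugation by a unitary is isometric, so $\Vert(\mathrm{Ad}\,{u})\varphi-(\mathrm{Ad}\,{u})\psi\Vert=\Vert\varphi-\psi\Vert$, and a two-term triangle estimate gives $\Vert(\mathrm{Ad}\,{w})\theta-(\mathrm{Ad}\,{w_{0}})\theta\Vert\leq2\Vert w-w_{0}\Vert$ for any $*$-homomorphism $\theta$, here $\theta=(\mathrm{Ad}\,{v})\psi$. Using $(\mathrm{Ad}\,{u})\psi=(\mathrm{Ad}\,{w})(\mathrm{Ad}\,{v})\psi$, this yields
\begin{align*}
\Vert\Phi-\Psi'\Vert
&\leq\Vert(\mathrm{Ad}\,{u})\varphi-(\mathrm{Ad}\,{u})\psi\Vert
+\Vert(\mathrm{Ad}\,{w})(\mathrm{Ad}\,{v})\psi-(\mathrm{Ad}\,{w_{0}})(\mathrm{Ad}\,{v})\psi\Vert\\
&\leq\Vert\varphi-\psi\Vert+2\Vert w-w_{0}\Vert
<\Vert\varphi-\psi\Vert+\varepsilon=1.
\end{align*}
Lemma~\ref{lempart}, applied to $\Phi,\Psi':A\to B_{k}$, then gives $R_{\Phi}=R_{\Psi'}$.

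Finally I would discard the auxiliary conjugation by $w_{0}$. Since $w_{0}\in B_{p}^{\sim}$ commutes with the unit $1_{B_{p}}$ of the finite-dimensional algebra $B_{p}$, the element $\omega:=w_{0}1_{B_{p}}$ is a genuine unitary of $B_{p}$ with $(\mathrm{Ad}\,{\omega})|_{B_{p}}=(\mathrm{Ad}\,{w_{0}})|_{B_{p}}$; hence $\Psi'$ and $(\mathrm{Ad}\,{v})\psi$, viewed into $B_{p}$, differ by an inner automorphism, so $R_{\Psi'}=R_{(\mathrm{Ad}\,{v})\psi}$ by Lemma~\ref{lemun}, and this equality persists relative to $B_{k}$ upon composing with the inclusion $B_{p}\hookrightarrow B_{k}$ via Lemma~\ref{lempr}. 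Combining, $R_{(\mathrm{Ad}\,{u})\varphi}=R_{\Phi}=R_{\Psi'}=R_{(\mathrm{Ad}\,{v})\psi}$ as multiplicity matrices into $B_{k}$, which is the claim. I expect the only delicate point to be the bookkeeping with the unitizations—verifying that $w_{0}\in B_{p}^{\sim}$ truly induces an inner automorphism of $B_{p}$ (so that the image stays in a finite-dimensional subalgebra and Lemma~\ref{lemun} applies), which rests on $w_{0}$ commuting with $1_{B_{p}}$.
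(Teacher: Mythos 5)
Your proof is correct and follows essentially the same route as the paper's: approximate $uv^{*}$ by a unitary in some $B_{p}^{\sim}$ via Corollary~\ref{corunaf} with error less than $\frac{1}{2}(1-\Vert\varphi-\psi\Vert)$, apply Lemma~\ref{lempart} to the two resulting nearby homomorphisms into $B_{k}$, and then strip off the auxiliary conjugation by observing that $\mathrm{Ad}\,{w_{0}}$ is implemented on the finite dimensional subalgebra by the genuine unitary $w_{0}1_{B_{p}}$ (the paper's $w'=a+\lambda 1_{B_{k}}$). The only cosmetic difference is at the last step, where you invoke Lemma~\ref{lemun} on maps into $B_{p}$ and push forward with Lemma~\ref{lempr}, while the paper notes directly that $R_{\eta}$ is the identity matrix and applies Lemma~\ref{lempr}.
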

\begin{proof}
Let 1 denote the unit of $B^{\sim}$.  Set $B_{n}^{\sim}=B_{n}+\C 1$,
so that $B^{\sim}=\overline{\bigcup_{n\geq  1}B_{n}^{\sim}}$.
By Corollary~\ref{corunaf}, there  is a positive integer $k\geq n,m$
and a unitary $w\in B_{k}^{\sim}$ such that
$\Vert uv^{*}-w\Vert<\frac{1}{2}(1 -\Vert \varphi-\psi\Vert)$.
We have the following (a priori non-commutative) diagram:

\[
\xymatrix{A\ar[d]_{(\mathrm{Ad}\,{u})\varphi}\ar[r]^{(\mathrm{Ad}\,{v})\psi}
  & B_{m}\ar[d]^{\mathrm{Ad}\,{w}} \\
 B_{n}\ar@{^{(}->}[r]
 &B_{k}\ .
 }
\]

\noindent Consider the two $*$-homomorphisms
$(\mathrm{Ad}\,{w})(\mathrm{Ad}\,{v})\psi,(\mathrm{Ad}\,{u})\varphi: A\to B_{k}$. We have

\[
\Vert (\mathrm{Ad}\,{w})(\mathrm{Ad}\,{v})\psi-(\mathrm{Ad}\,{u})\varphi\Vert
=\Vert (\mathrm{Ad}\,{wv})\psi-(\mathrm{Ad}\,{u})\varphi\Vert
\leq
2\Vert wv-u\Vert +\Vert \varphi-\psi\Vert<1.
\]

\noindent Hence by Lemma~\ref{lempart},
$R_{(\mathrm{Ad}\,{w})(\mathrm{Ad}\,{v})\psi}=R_{(\mathrm{Ad}\,{u})\varphi}$.
Now define $\eta :B_{k}\to B_{k}$ with $\eta(x)=(\mathrm{Ad}\,{w})(x)$, $x\in B_{k}$.
Then there is a unitary $w'\in B_{k}$  such that $\eta=\mathrm{Ad}\,{w'}$.
In fact, $w\in B_{k}^{\sim}$ and thus $w=a+\lambda 1$, for some $a\in B_{k}$ and
$\lambda\in \C$. Set $w'=a+\lambda 1_{B_{k}}$. Since $w$ is a unitary, so is $w'$,
and we have $(\mathrm{Ad}\,{w})(x)=(\mathrm{Ad}\,{w'})(x)$, $x\in B_{k}$. Thus
 $\eta=\mathrm{Ad}\,{w'}$ and hence $R_{\eta}$ is the identity matrix, by Theorem~\ref{thrfd}.
 On the other hand,
 $(\mathrm{Ad}\,{w})(\mathrm{Ad}\,{v})\psi=\eta\circ (\mathrm{Ad}\,{v})\psi$, and so
 $R_{(\mathrm{Ad}\,{w})(\mathrm{Ad}\,{v})\psi}=R_{\eta\circ (\mathrm{Ad}\,{v})\psi}=
 R_{\eta}R_{(\mathrm{Ad}\,{v})\psi}=R_{(\mathrm{Ad}\,{v})\psi}$.
 Therefore we have $R_{(\mathrm{Ad}\,{u})\varphi}=R_{(\mathrm{Ad}\,{v})\psi}$.
\end{proof}
The functor $\mathcal{B}: \mathbf{AF} \to \mathbf{BD}$ is of course
not faithful (we follow  \cite{gb84, ma98} for categorical definitions).
The following gives  useful criteria to check  whether the images
under $\mathcal{B}$
of two morphisms of $\mathbf{AF}$
are equal in $\mathbf{BD}$. This enables us to make connections between
morphisms of $\mathbf{BD}$ and morphisms of the categories
$\mathbf{AF}^{\mathrm{out}}$ and
$\overline{\mathbf{AF}^{\mathrm{out}}}$ (see Theorems~\ref{throut} and
\ref{throutbar} below).
\begin{lemma}\label{lemeqbh}
Let $\mathcal{A}_{1},\mathcal{A}_{2}\in\mathbf{AF}$ where
$\mathcal{A}_{1}=(A, (A_{n})_{n=1}^{\infty},(\varphi_{n})_{n=1}^{\infty})$
and
$\mathcal{A}_{2}=(B, (B_{n})_{n=1}^{\infty},(\psi_{n})_{n=1}^{\infty})$.
Let
$\varphi ,\psi : \mathcal{A}_{1}\to \mathcal{A}_{2}$
be  morphisms in $\mathbf{AF}$. The following statements are equivalent:
\begin{itemize}
\item[(1)]
$\mathcal{B}(\varphi)=\mathcal{B}(\psi)$;
\item[(2)]
there is a sequence of unitaries
$(u_{n})_{n=1}^{\infty}$ in $B^{\sim}$ such that
$\varphi=(\mathrm{Ad}\,{u_{n}})\psi$ on $A_{n}$, $n\geq 1$;
\item[(3)]
there is a sequence of unitaries
$(u_{n})_{n=1}^{\infty}$ in $B^{\sim}$ such that
$\varphi(a)=\lim\limits_{n\to \infty}(\mathrm{Ad}\,{u_{n}})\psi(a)$, $a\in A$.
\end{itemize}
\end{lemma}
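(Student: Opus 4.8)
The plan is to prove the cycle (1)$\Rightarrow$(2)$\Rightarrow$(3)$\Rightarrow$(1). The implication (2)$\Rightarrow$(3) is the routine one: if $\varphi=(\mathrm{Ad}\,{u_{n}})\psi$ on $A_{n}$ for each $n$, then for $a\in\bigcup_{n}A_{n}$, say $a\in A_{N}$, one has $\varphi(a)=(\mathrm{Ad}\,{u_{n}})\psi(a)$ for all $n\geq N$, so the limit holds on the dense set $\bigcup_{n}A_{n}$. Since each $(\mathrm{Ad}\,{u_{n}})\psi$ is a contraction from $A$ into $B$ (as $\mathrm{Ad}\,{u_{n}}$ is isometric and $\psi$ is contractive, and $B$ is an ideal of $B^{\sim}$), a standard $\varepsilon/3$ argument upgrades pointwise convergence on the dense subset to pointwise convergence on all of $A$, giving (3).

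For (1)$\Rightarrow$(2), I would fix representatives $((F_{n})_{n=1}^{\infty},(f_{n})_{n=1}^{\infty})$ and $((G_{n})_{n=1}^{\infty},(h_{n})_{n=1}^{\infty})$ of $\mathcal{B}(\varphi)$ and $\mathcal{B}(\psi)$ built as in Definition~\ref{deffu} from unitaries $w_{n},z_{n}\in B^{\sim}$ with $w_{n}\varphi(A_{n})w_{n}^{*}\subseteq B_{f_{n}}$ and $z_{n}\psi(A_{n})z_{n}^{*}\subseteq B_{h_{n}}$; thus $F_{n}=R_{(\mathrm{Ad}\,{w_{n}})\varphi\upharpoonright_{A_{n}}}$ and $G_{n}=R_{(\mathrm{Ad}\,{z_{n}})\psi\upharpoonright_{A_{n}}}$ as maps into $B_{f_{n}}$ and $B_{h_{n}}$ respectively. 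Since $\mathcal{B}(\varphi)=\mathcal{B}(\psi)$, Proposition~\ref{proeqpre} lets me apply Definition~\ref{defeq2}: for each $n$ there is $\ell\geq f_{n},h_{n}$ with $S_{f_{n}\ell}F_{n}=S_{h_{n}\ell}G_{n}$. By Lemma~\ref{lempr} (the inclusions $B_{f_{n}}\hookrightarrow B_{\ell}$ and $B_{h_{n}}\hookrightarrow B_{\ell}$ carry multiplicity matrices $S_{f_{n}\ell}$ and $S_{h_{n}\ell}$), this says exactly that the $*$-homomorphisms $(\mathrm{Ad}\,{w_{n}})\varphi\upharpoonright_{A_{n}}$ and $(\mathrm{Ad}\,{z_{n}})\psi\upharpoonright_{A_{n}}$, now regarded as maps $A_{n}\to B_{\ell}$, have equal multiplicity matrices. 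Lemma~\ref{lemun} then provides a unitary $s_{n}\in B_{\ell}$ with $(\mathrm{Ad}\,{w_{n}})\varphi\upharpoonright_{A_{n}}=(\mathrm{Ad}\,{s_{n}})(\mathrm{Ad}\,{z_{n}})\psi\upharpoonright_{A_{n}}$, whence $\varphi=(\mathrm{Ad}\,{u_{n}})\psi$ on $A_{n}$ with $u_{n}=w_{n}^{*}s_{n}z_{n}\in B^{\sim}$, which is (2).

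The implication (3)$\Rightarrow$(1) is where the work lies, and I expect it to be the main obstacle: the unitaries $u_{n}$ in (3) need not be close to $1$, so one cannot read off the multiplicity data directly. The remedy is to pass to finite level and invoke Lemma~\ref{lemad}, which is designed precisely to absorb such inner perturbations. Fix $m$. On the finite-dimensional algebra $A_{m}$, pointwise convergence of the uniformly bounded sequence $(\mathrm{Ad}\,{u_{n}})\psi$ to $\varphi$ forces norm convergence of the restrictions, so for some large $n$ one has $\Vert\varphi\upharpoonright_{A_{m}}-(\mathrm{Ad}\,{u_{n}})\psi\upharpoonright_{A_{m}}\Vert<1$. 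I would then apply Lemma~\ref{lemad} to $\varphi\upharpoonright_{A_{m}}$ and $(\mathrm{Ad}\,{u_{n}})\psi\upharpoonright_{A_{m}}$, compressing the former into $B_{f_{m}}$ by $w_{m}$ and the latter into $B_{h_{m}}$ by $z_{m}u_{n}^{*}$ (noting $(z_{m}u_{n}^{*})(\mathrm{Ad}\,{u_{n}})\psi(A_{m})(z_{m}u_{n}^{*})^{*}=z_{m}\psi(A_{m})z_{m}^{*}\subseteq B_{h_{m}}$); this yields $k\geq f_{m},h_{m}$ with $R_{(\mathrm{Ad}\,{w_{m}})\varphi\upharpoonright_{A_{m}}}=R_{(\mathrm{Ad}\,{z_{m}u_{n}^{*}})(\mathrm{Ad}\,{u_{n}})\psi\upharpoonright_{A_{m}}}$ as maps into $B_{k}$. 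Since $(\mathrm{Ad}\,{z_{m}u_{n}^{*}})(\mathrm{Ad}\,{u_{n}})\psi=(\mathrm{Ad}\,{z_{m}})\psi$, the right-hand side is $R_{(\mathrm{Ad}\,{z_{m}})\psi\upharpoonright_{A_{m}}}$, so the perturbation by $u_{n}$ is absorbed. Translating both sides into the Bratteli data via Lemma~\ref{lempr} gives $S_{f_{m}k}F_{m}=S_{h_{m}k}G_{m}$, which is exactly Definition~\ref{defeq2} for the chosen representatives; hence by Proposition~\ref{proeqpre} the premorphisms are equivalent and $\mathcal{B}(\varphi)=\mathcal{B}(\psi)$, establishing (1).
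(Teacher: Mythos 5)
Your proof follows the paper's own argument essentially step for step: the same cycle (1)$\Rightarrow$(2)$\Rightarrow$(3)$\Rightarrow$(1), using Proposition~\ref{proeqpre} (via Definition~\ref{defeq2}) together with Lemma~\ref{lemun} for (1)$\Rightarrow$(2), density of $\bigcup_{n}A_{n}$ for (2)$\Rightarrow$(3), and, for (3)$\Rightarrow$(1), compactness of the unit ball of $A_{m}$ to upgrade pointwise to norm convergence of the restrictions, followed by Lemma~\ref{lemad} applied with the compressing unitaries $w_{m}$ and $z_{m}u_{n}^{*}$ --- these are precisely the paper's choices $u=v_{n}$ and $v=w_{n}u_{n'}^{*}$, and your observation that $(\mathrm{Ad}\,{z_{m}u_{n}^{*}})(\mathrm{Ad}\,{u_{n}})\psi=(\mathrm{Ad}\,{z_{m}})\psi$ absorbs the perturbation is exactly the point of that step.

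There is, however, one genuine (if small and repairable) lapse in your (1)$\Rightarrow$(2). Lemma~\ref{lemun} produces a unitary $s_{n}$ of the finite-dimensional algebra $B_{\ell}$, so that $s_{n}s_{n}^{*}=s_{n}^{*}s_{n}=1_{B_{\ell}}$; but $1_{B_{\ell}}$ is in general a proper projection of $B^{\sim}$ (for instance when $B$ is non-unital, or when the $B_{n}$ do not contain the unit of $B$). Consequently your element $u_{n}=w_{n}^{*}s_{n}z_{n}$ satisfies $u_{n}u_{n}^{*}=w_{n}^{*}1_{B_{\ell}}w_{n}$, which differs from $1$ unless $1_{B_{\ell}}=1$: it is only a partial isometry, whereas statement (2) requires unitaries of $B^{\sim}$. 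The conjugation identity you derive is still correct, but to get honest unitaries you must first extend $s_{n}$ across the complement of its support: set $w=s_{n}+(1-1_{B_{\ell}})\in B^{\sim}$. Since $s_{n}(1-1_{B_{\ell}})=(1-1_{B_{\ell}})s_{n}^{*}=0$, one checks $ww^{*}=w^{*}w=1$, and $\mathrm{Ad}\,{w}$ agrees with $\mathrm{Ad}\,{s_{n}}$ on $B_{\ell}$, in particular on $z_{n}\psi(A_{n})z_{n}^{*}$; then $u_{n}=w_{n}^{*}wz_{n}$ is a genuine unitary of $B^{\sim}$ implementing $\varphi=(\mathrm{Ad}\,{u_{n}})\psi$ on $A_{n}$. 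This is exactly the paper's adjustment (there written as $w=u-1_{B_{m}}+1$) in its proof of this implication; with that one-line correction your argument is sound throughout.
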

\begin{proof}
Choose a sequence of unitaries $(v_{n})_{n=1}^{\infty}$
in $B^{\sim}$,
and a sequence of positive integers $(f_{n})_{n=1}^{\infty}$,
as in Definition~\ref{deffu}, giving rise to a premorphism
$((F_{n})_{n=1}^{\infty}, (f_{n})_{n=1}^{\infty})$ with equivalence class
$\mathcal{B}(\varphi)$. Similarly, choose a sequence of unitaries $(w_{n})_{n=1}^{\infty}$
in $B^{\sim}$,
and a sequence of positive integers $(g_{n})_{n=1}^{\infty}$, giving rise
to a premorphism
$((G_{n})_{n=1}^{\infty}, (g_{n})_{n=1}^{\infty})$ for
$\mathcal{B}(\psi)$.

$(1)\Rightarrow (2)$: Suppose that $\mathcal{B}(\varphi)=\mathcal{B}(\psi)$. Hence
$((F_{n})_{n=1}^{\infty}, (f_{n})_{n=1}^{\infty})$ is equivalent to
$((G_{n})_{n=1}^{\infty}, (g_{n})_{n=1}^{\infty})$. Fix $n\geq 1$.
By Proposition~\ref{proeqpre}, there is an $m\geq f_{n},g_{n}$ such that
$S_{f_{n}m}F_{n}=S_{g_{n}m}G_{n}$, where
$S_{f_{n}m}$ and $S_{g_{n}m}$ are the  multiplicity matrices of the injections
$j_{1}: B_{f_{n}}\hookrightarrow B_{m}$  and
$j_{2}: B_{g_{n}}\hookrightarrow B_{m}$, respectively.
On the other hand,
$F_{n}$ and $G_{n}$ are the  multiplicity matrices of
$(\mathrm{Ad}\,{v_{n}})\varphi: A_{n}\to B_{f_{n}}$ and
$(\mathrm{Ad}\,{w_{n}})\psi: A_{n}\to B_{g_{n}}$, respectively, by Definition~\ref{deffu}.
Thus $R_{j_{1}(\mathrm{Ad}\,{v_{n}})\varphi}=S_{f_{n}m}F_{n}=S_{g_{n}m}G_{n}=
R_{j_{2}(\mathrm{Ad}\,{w_{n}})\psi}$. By Lemma~\ref{lemun},
there is a unitary $u\in B_{m}$ such that
$j_{1}(\mathrm{Ad}\,{v_{n}})\varphi=(\mathrm{Ad}\,{u})j_{2}(\mathrm{Ad}\,{w_{n}})\psi$
on $A_{n}$.
Set $w=u-1_{B_{m}}+1$, where 1 is the unit of $B^{\sim}$.
One can easily see that $w$ is a unitary in $B^{\sim}$ and again we have
$j_{1}(\mathrm{Ad}\,{v_{n}})\varphi=(\mathrm{Ad}\,{w})j_{2}(\mathrm{Ad}\,{w_{n}})\psi$
on $A_{n}$.
Set $u_{n}=v_{n}^{*}ww_{n}$. Therefore
$\varphi=(\mathrm{Ad}\,{u_{n}})\psi$ on $A_{n}$.

$(2)\Rightarrow (3)$: This holds as $(A_{n})_{n=1}^{\infty}$ is increasing
with union dense in $A$.

$(3)\Rightarrow (1)$: Suppose that there is a sequence of unitaries
$(u_{n})_{n=1}^{\infty}$ in $B^{\sim}$ such that
$\varphi$ is the pointwise limit of the sequence
$((\mathrm{Ad}\,{u_{n}})\psi)_{n=1}^{\infty}$ on $A$.
Fix $n\geq 1$. Since $((\mathrm{Ad}\,{u_{m}})\psi)_{m=1}^{\infty}$ converges
$\varphi$ on compact subsets of $A$ and the unit ball of $A_{n}$ is compact,
$\Vert (\mathrm{Ad}\,{u_{m}})\psi- \varphi\Vert_{A_{n}}\longrightarrow 0$, as $m$ tends to infinity.
Thus there is an $n'\geq 1$ such that
$\Vert (\mathrm{Ad}\,{u_{n'}})\psi- \varphi\Vert_{A_{n}}<1$.
Set $u=v_{n}$ and $v=w_{n}u_{n'}^{*}$. Hence
$u \varphi(A_{n})u^{*}\subseteq B_{f_{n}}$ and
$v(\mathrm{Ad}\,{u_{n'}})\psi(A_{n})v^{*}= w_{n}\psi(A_{n})w_{n}^{*}\subseteq B_{g_{n}}$.
Applying Lemma~\ref{lemad}, there is an $m\geq f_{n},g_{n}$
such that
$(\mathrm{Ad}\,{u})\varphi, (\mathrm{Ad}\,{v})\circ ((\mathrm{Ad}\,{u_{n'}})\psi) :A_{n}\to B_{m}$
have the  same
 multiplicity matrices; that is,
$(\mathrm{Ad}\,{v_{n}})\varphi, (\mathrm{Ad}\,{w_{n}})\psi:A_{n}\to B_{m}$
have the  same
 multiplicity matrices. By Proposition~\ref{proeqpre}, the premorphisms
$((F_{n})_{n=1}^{\infty}, (f_{n})_{n=1}^{\infty})$ and
$((G_{n})_{n=1}^{\infty}, (g_{n})_{n=1}^{\infty})$ are  equivalent  and therefore
$\mathcal{B}(\varphi)=\mathcal{B}(\psi)$.
\end{proof}
\begin{remark}
Lemma~\ref{lemeqbh} remains valid if we replace $B^{\sim}$ with $B^{+}$.
Since, if $B$ is non-unital, we have  $B^{\sim}=B^{+}$, and if $B$ is unital, according to our convention,
$B^{\sim}=B$;
thus by the techniques applied in the proof of Lemma~\ref{lemad} and
Lemma~\ref{lemeqbh} for interchanging the unitaries of $B$ and $B^{+}$,
the statement is also true for $B^{+}$ instead of $B^{\sim}$.
\end{remark}
In general, the sequence of unitaries in Lemma~\ref{lemeqbh} cannot be replaced by a single unitary.
In other words, in the setting of that lemma, the condition
$\mathcal{B}(\varphi)=\mathcal{B}(\psi)$ does not necessarily imply that there is a unitary
$u\in B^{\sim}$ with $\varphi =(\mathrm{Ad}\,{u})\psi$. (See the following
example.)
\begin{example}\label{exaun}
Consider the C$^*$-algebra $A=\mathcal{K}(l^{2})$
and let $(e_{n})_{n=1}^{\infty}$ be an orthonormal basis for $l^2$.
Consider  the C$^{*}$-subalgebra  $A_{n}$ generated by the rank one operators
$\{ e_{i}\otimes e_{j}^{*} | 1\leq i, j\leq n\}$ for $n\geq 1$. Then
$A_{1}\subseteq A_{2}\subseteq \cdots$, and $A=\overline{\bigcup_{n\geq 1}A_{n}}$.
Define $\varphi, \psi : A\to A$ as follows. Set $\psi=\mathrm{id}_{A}$. For each $n\in \N$,
let $u_{n}$ denote the unitary in $A^{\sim}=\mathcal{K}(l^{2})\oplus I$ defined by
$u_{n}(e_{k})=e_{k+1}$, for $1\leq k\leq n$, $u_{n}(e_{n+1})=e_{1}$, and
$u_{n}(e_{k})=e_{k}$, for $k\geq n+2$. Then
$\mathrm{Ad}\,{u_{n}}$ and $\mathrm{Ad}\,{u_{m}}$ agree on $A_{n}$ when
$n\leq m$. Set $\varphi=\mathrm{Ad}\,{u_{n}}$ on $A_{n}$, $n\geq 1$.
Then $\varphi: A\to A$ is a $*$-homomorphism and
$\varphi=(\mathrm{Ad}\,{u_{n}})\psi$ on $A_{n}$, $n\geq 1$. Suppose
 that there were a unitary $u\in A^{\sim}$ such that
$\varphi=\mathrm{Ad}\,{u}$. Then
$u(e_{n})\otimes u(e_{n})^{*}=\varphi(e_{n}\otimes e_{n}^{*})= e_{n+1}\otimes e_{n+1}^{*}$, $n\geq 1$.
Thus, $u(e_{n})=\lambda_{n}e_{n+1}$ for some complex number $\lambda_{n}$ with absolute value one.
Set $f_{n}=\lambda_{1}\lambda_{2}\cdots\lambda_{n-1}e_{n}$, $n\geq 1$.
Then $(f_{n})_{n=1}^{\infty}$ is an orthonormal basis for $l^2$ and
$u(f_{n})=f_{n+1}$, $n\geq 1$; in other words, $u$ is the unilateral shift, which is not a unitary.
\end{example}
\begin{corollary}\label{corun}
Let $\mathcal{A}_{1},\mathcal{A}_{2}\in\mathbf{AF}$ and
$\varphi,\psi : \mathcal{A}_{1}\to \mathcal{A}_{2}$
be  morphisms in $\mathbf{AF}$ such that
$\varphi =(\mathrm{Ad}\,{u})\psi$ for some unitary $u$
in $\mathcal{A}_{2}^{\sim}$. Then
 $\mathcal{B}(\varphi)=\mathcal{B}(\psi)$.
 \end{corollary}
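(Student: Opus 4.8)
The plan is to recognize this as an immediate application of the implication $(2)\Rightarrow(1)$ in Lemma~\ref{lemeqbh}, using the constant sequence of unitaries. Write $\mathcal{A}_{2}=(B,(B_{n})_{n=1}^{\infty},(\psi_{n})_{n=1}^{\infty})$, so that $\mathcal{A}_{2}^{\sim}=B^{\sim}$ and the given unitary $u$ lies in $B^{\sim}$. The hypothesis is that $\varphi=(\mathrm{Ad}\,{u})\psi$ holds on all of $A$, hence in particular on each subalgebra $A_{n}$.

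First I would set $u_{n}=u$ for every $n\geq 1$. Then $(u_{n})_{n=1}^{\infty}$ is a sequence of unitaries in $B^{\sim}$ satisfying $\varphi=(\mathrm{Ad}\,{u_{n}})\psi$ on $A_{n}$ for each $n\geq 1$, which is exactly condition~(2) of Lemma~\ref{lemeqbh}. By the equivalence of statements (1) and (2) in that lemma, it follows that $\mathcal{B}(\varphi)=\mathcal{B}(\psi)$.

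There is essentially no obstacle here: the content of the corollary is entirely absorbed into Lemma~\ref{lemeqbh}, and the single global unitary of the hypothesis is simply the degenerate case of the sequence of ``local'' unitaries allowed there. The only point worth noting is the identification $\mathcal{A}_{2}^{\sim}=B^{\sim}$, so that the unitary $u$ provided by the hypothesis is of precisely the type called for in condition~(2) of the lemma. (Example~\ref{exaun} shows that the converse direction fails, i.e.\ that a single global unitary genuinely cannot always be extracted, so the corollary is strictly weaker than Lemma~\ref{lemeqbh} and cannot be reversed.)
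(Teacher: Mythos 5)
Your proof is correct and is exactly the argument the paper intends (the corollary is stated there without proof, as an immediate consequence of Lemma~\ref{lemeqbh}): take the constant sequence $u_{n}=u$ in $B^{\sim}=\mathcal{A}_{2}^{\sim}$ and invoke the implication $(2)\Rightarrow(1)$. Your side remark that Example~\ref{exaun} shows the converse fails is also accurate.
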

\begin{corollary}\label{cordis}
Let $\varphi,\psi: \mathcal{A}_{1} \to \mathcal{A}_{2}$ morphisms  in $\mathbf{AF} $ with
$\Vert \varphi -\psi\Vert <1$. Then we have
$\mathcal{B}(\varphi)=\mathcal{B}(\psi)$.
\end{corollary}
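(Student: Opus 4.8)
The plan is to reduce the statement to Definition~\ref{defeq2} via Proposition~\ref{proeqpre}, using Lemma~\ref{lemad} as the workhorse at each level $n$. Write $\mathcal{A}_{1}=(A,(A_{n})_{n=1}^{\infty},(\varphi_{n})_{n=1}^{\infty})$ and $\mathcal{A}_{2}=(B,(B_{n})_{n=1}^{\infty},(\psi_{n})_{n=1}^{\infty})$, and set $\mathcal{B}(\mathcal{A}_{1})=(V,E)$ and $\mathcal{B}(\mathcal{A}_{2})=(W,S)$. First I would fix, as in Definition~\ref{deffu}, sequences of unitaries $(v_{n})_{n=1}^{\infty}$ and $(w_{n})_{n=1}^{\infty}$ in $B^{\sim}$ together with increasing cofinal sequences of positive integers $(f_{n})_{n=1}^{\infty}$ and $(h_{n})_{n=1}^{\infty}$ so that $v_{n}\varphi(A_{n})v_{n}^{*}\subseteq B_{f_{n}}$ and $w_{n}\psi(A_{n})w_{n}^{*}\subseteq B_{h_{n}}$, giving premorphisms $((F_{n})_{n=1}^{\infty},(f_{n})_{n=1}^{\infty})$ and $((G_{n})_{n=1}^{\infty},(h_{n})_{n=1}^{\infty})$ representing $\mathcal{B}(\varphi)$ and $\mathcal{B}(\psi)$ respectively; here $F_{n}$ is the multiplicity matrix of $(\mathrm{Ad}\,{v_{n}})\varphi\upharpoonright_{A_{n}}:A_{n}\to B_{f_{n}}$ and $G_{n}$ that of $(\mathrm{Ad}\,{w_{n}})\psi\upharpoonright_{A_{n}}:A_{n}\to B_{h_{n}}$, in the multiplicity-matrix notation of the remark following Lemma~\ref{lempr}.

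Next, I would fix $n\geq 1$ and observe that $\Vert \varphi\upharpoonright_{A_{n}} - \psi\upharpoonright_{A_{n}}\Vert \leq \Vert \varphi - \psi\Vert <1$, since restriction cannot increase the norm. Thus the hypotheses of Lemma~\ref{lemad} are met for the finite dimensional algebra $A_{n}$, the two $*$-homomorphisms $\varphi\upharpoonright_{A_{n}},\psi\upharpoonright_{A_{n}}:A_{n}\to B$, and the unitaries $v_{n},w_{n}$. Applying that lemma produces an integer $m\geq f_{n},h_{n}$ for which the $*$-homomorphisms $(\mathrm{Ad}\,{v_{n}})\varphi$ and $(\mathrm{Ad}\,{w_{n}})\psi$, regarded as maps $A_{n}\to B_{m}$, have equal multiplicity matrices.

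The final step is to translate this equality into the diagram language. By Lemma~\ref{lempr}, the multiplicity matrix of $(\mathrm{Ad}\,{v_{n}})\varphi:A_{n}\to B_{m}$, obtained by composing the map into $B_{f_{n}}$ with the inclusion $B_{f_{n}}\hookrightarrow B_{m}$, is $S_{f_{n}m}F_{n}$, and likewise the multiplicity matrix of $(\mathrm{Ad}\,{w_{n}})\psi:A_{n}\to B_{m}$ is $S_{h_{n}m}G_{n}$. Hence $S_{f_{n}m}F_{n}=S_{h_{n}m}G_{n}$, which is exactly the condition of Definition~\ref{defeq2}. By Proposition~\ref{proeqpre} the two premorphisms are equivalent, and therefore $\mathcal{B}(\varphi)=\mathcal{B}(\psi)$.

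I would expect no serious obstacle here: the analytic content is entirely carried by Lemma~\ref{lemad}, and the reduction to $\Vert \varphi - \psi\Vert <1$ on each $A_{n}$ is immediate. The only point requiring care is the bookkeeping in the last step, namely checking that pushing $F_{n}$ (respectively $G_{n}$) forward along the inclusion into $B_{m}$ really multiplies it on the left by $S_{f_{n}m}$ (respectively $S_{h_{n}m}$); this is precisely the composition rule $R_{\psi\varphi}=R_{\psi}R_{\varphi}$ of Lemma~\ref{lempr} applied to the inclusion maps, whose multiplicity matrices are the corresponding $S$-products.
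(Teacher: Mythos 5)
Your proof is correct, and it rests on the same workhorse as the paper's: the levelwise application of Lemma~\ref{lemad} to the Definition~\ref{deffu} data $(v_{n},f_{n})$ and $(w_{n},h_{n})$, which is legitimate since $\Vert\varphi\upharpoonright_{A_{n}}-\psi\upharpoonright_{A_{n}}\Vert<1$. Where you diverge is in the endgame. The paper, having obtained $R_{(\mathrm{Ad}\,v_{n})\varphi}=R_{(\mathrm{Ad}\,w_{n})\psi}$ into some $B_{k_{n}}$, converts this matrix equality back into unitary conjugacy: it invokes Lemma~\ref{lemun} to produce a unitary $u\in B_{k_{n}}$, adjusts it to a unitary $w=u-1_{B_{k_{n}}}+1$ of $B^{\sim}$, sets $u_{n}=v_{n}^{*}ww_{n}$ to get $\varphi=(\mathrm{Ad}\,u_{n})\psi$ on $A_{n}$, and then concludes via the criterion $(2)\Rightarrow(1)$ of Lemma~\ref{lemeqbh}. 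You instead stay entirely at the combinatorial level: Lemma~\ref{lempr} applied to the inclusions $B_{f_{n}}\hookrightarrow B_{m}$ and $B_{h_{n}}\hookrightarrow B_{m}$ gives $S_{f_{n}m}F_{n}=S_{h_{n}m}G_{n}$, which is verbatim Definition~\ref{defeq2}, and Proposition~\ref{proeqpre} finishes. Your route is shorter and avoids the unitary bookkeeping (in effect you inline the mechanics of the $(3)\Rightarrow(1)$ direction of Lemma~\ref{lemeqbh}, with the norm bound supplied directly by the hypothesis rather than by pointwise approximation); note that you could not instead have cited Lemma~\ref{lemeqbh}$(3)$ with a trivial sequence of unitaries, so the direct appeal to Lemma~\ref{lemad} is the right move. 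What the paper's detour buys is the stronger intermediate statement $\varphi=(\mathrm{Ad}\,u_{n})\psi$ on each $A_{n}$ and uniformity with the criterion of Lemma~\ref{lemeqbh}, which is reused elsewhere (e.g., in Theorem~\ref{throutbar}); what yours buys is economy, using only Lemmas~\ref{lempr} and \ref{lemad} together with Proposition~\ref{proeqpre}.
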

\begin{proof}
Following the notation of Lemma~\ref{lemeqbh} and the first paragraph of its proof,
we have
$v_{n}\varphi(A_{n})v_{n}^{*}\subseteq B_{f_{n}}$ and
$w_{n}\psi(A_{n})w_{n}^{*}\subseteq B_{g_{n}}$, $n\geq 1$,
by Definition~\ref{deffu}. Fix $n\geq 1$. By Lemma~\ref{lemad},
the $*$-homomorphisms
$(\mathrm{Ad}\,{v_{n}})\varphi ,(\mathrm{Ad}\,{w_{n}})\psi : A_{n}\to B_{k_{n}}$ have the same
 multiplicity matrices, for some positive integer $k_{n}\geq f_{n},g_{n}$.
By Lemma~\ref{lemun}, there is a unitary $u\in B_{k_{n}}$ such that
$(\mathrm{Ad}\,{v_{n}})\varphi=(\mathrm{Ad}\,{u})(\mathrm{Ad}\,{w_{n}})\psi$ on $A_{n}$.
Set $w=u-1_{B_{k_{n}}}+1$, where 1 is the unit of $B^{\sim}$.
Then $w$ is a unitary in $B^{\sim}$ and again we have
$(\mathrm{Ad}\,{v_{n}})\varphi=(\mathrm{Ad}\,{w})(\mathrm{Ad}\,{w_{n}})\psi$
on $A_{n}$.
Setting $u_{n}=v_{n}^{*}ww_{n}$, we have
$\varphi=(\mathrm{Ad}\,{u_{n}})\psi$ on $A_{n}$, and so
$\mathcal{B}(\varphi)=\mathcal{B}(\psi)$ by Lemma~\ref{lemeqbh}.
\end{proof}
Consider  the category $\mathbf{AF}^{\mathrm{out}}$  associated to
$\mathbf{AF}$ as described   in \cite{el10};
its objects are the same as those of $\mathbf{AF}$ and its morphisms are
as follows. An \emph{inner automorphism} for an object
$(A, (A_{n})_{n=1}^{\infty},(\varphi_{n})_{n=1}^{\infty})$ of $\mathbf{AF}$
is a $*$-isomorphism $\mathrm{Ad}\,{u}: A\to A$, for some unitary
$u\in A^{+}$. Two morphisms $\varphi,\psi :\mathcal{A}_{1}\to \mathcal{A}_{2}$ are
\emph{equivalent} if $\varphi=(\mathrm{Ad}\,{u})\psi$ for some inner automorphism
$\mathrm{Ad}\,{u}$ of $\mathcal{A}_{2}$. Let $\mathcal{F}(\varphi)$ denote the equivalence
class of $\varphi$. These equivalence classes are the morphisms of $\mathbf{AF}^{\mathrm{out}}$.
Denote by $\mathcal{F}: \mathbf{AF}\to \mathbf{AF}^{\mathrm{out}}$
the functor which assigns to each object of $\mathbf{AF}$ itself, and maps
morphisms as above. Now  \cite[Theorem~1]{el10} states that
$\mathcal{F}: \mathbf{AF}\to \mathbf{AF}^{\mathrm{out}}$
is a  strong classification functor. Obviously, it is  also a full functor.
\begin{theorem}\label{throut}
There is a unique functor
$\widetilde{\mathcal{B}}: \mathbf{AF}^{\mathrm{out}} \to \mathbf{BD}$
such that $\mathcal{B}=\widetilde{\mathcal{B}}\,\mathcal{F}$:
\[
\xymatrix{
\mathbf{AF}\ar[r]^{\mathcal{F}}
\ar[rd]_{\mathcal{B}} & \mathbf{AF}^{\mathrm{out}}
\ar[d]^{\widetilde{\mathcal{B}}}\\
& \mathbf{BD}\ .
}
\]
Moreover, it is a strong classification functor and a full functor.
\end{theorem}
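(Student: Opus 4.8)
The plan is to exploit the fact that $\mathcal{F}:\mathbf{AF}\to\mathbf{AF}^{\mathrm{out}}$ is the identity on objects and is surjective on each Hom-set (it is the quotient functor, sending a morphism $\varphi$ to its equivalence class $\mathcal{F}(\varphi)$), so that the desired functor is forced and the whole problem reduces to a single well-definedness check. Concretely, I would set $\widetilde{\mathcal{B}}(\mathcal{A})=\mathcal{B}(\mathcal{A})$ on objects and $\widetilde{\mathcal{B}}(\mathcal{F}(\varphi))=\mathcal{B}(\varphi)$ on morphisms. Both formulas are forced by the requirement $\mathcal{B}=\widetilde{\mathcal{B}}\,\mathcal{F}$ together with the surjectivity of $\mathcal{F}$ on objects and on morphisms: any $\widetilde{\mathcal{B}}'$ with $\mathcal{B}=\widetilde{\mathcal{B}}'\mathcal{F}$ satisfies $\widetilde{\mathcal{B}}'(\mathcal{A})=\widetilde{\mathcal{B}}'(\mathcal{F}(\mathcal{A}))=\mathcal{B}(\mathcal{A})$ and $\widetilde{\mathcal{B}}'(\mathcal{F}(\varphi))=\mathcal{B}(\varphi)$. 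This yields uniqueness at once, so it remains only to verify that the second formula is independent of the representative $\varphi$.

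Well-definedness is the one substantive point, and it reduces directly to a result already proved. Suppose $\mathcal{F}(\varphi)=\mathcal{F}(\psi)$ for morphisms $\varphi,\psi:\mathcal{A}_1\to\mathcal{A}_2$ of $\mathbf{AF}$. By the definition of the morphisms of $\mathbf{AF}^{\mathrm{out}}$ this means that $\varphi=(\mathrm{Ad}\,u)\psi$ for some inner automorphism $\mathrm{Ad}\,u$ of $\mathcal{A}_2$, that is, for some unitary $u$ in $\mathcal{A}_2^{+}$. Corollary~\ref{corun}, extended from $\mathcal{A}_2^{\sim}$ to $\mathcal{A}_2^{+}$ by the remark following Lemma~\ref{lemeqbh}, then yields $\mathcal{B}(\varphi)=\mathcal{B}(\psi)$. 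Hence $\widetilde{\mathcal{B}}(\mathcal{F}(\varphi))$ is unambiguous and $\widetilde{\mathcal{B}}$ is well defined.

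Functoriality is then purely formal. Since every morphism of $\mathbf{AF}^{\mathrm{out}}$ has the form $\mathcal{F}(\varphi)$ and $\mathcal{F}(\psi)\mathcal{F}(\varphi)=\mathcal{F}(\psi\varphi)$, I would compute $\widetilde{\mathcal{B}}(\mathcal{F}(\psi)\mathcal{F}(\varphi))=\mathcal{B}(\psi\varphi)=\mathcal{B}(\psi)\mathcal{B}(\varphi)=\widetilde{\mathcal{B}}(\mathcal{F}(\psi))\widetilde{\mathcal{B}}(\mathcal{F}(\varphi))$, using that $\mathcal{B}$ is a functor (Proposition~\ref{profun}); preservation of identities is immediate from $\mathcal{F}(\mathrm{id})=\mathrm{id}$ and $\mathcal{B}(\mathrm{id})=\mathrm{id}$. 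Thus $\widetilde{\mathcal{B}}$ is a functor satisfying $\mathcal{B}=\widetilde{\mathcal{B}}\,\mathcal{F}$.

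Finally, the three classification properties all transfer from $\mathcal{B}$ via the identities $\widetilde{\mathcal{B}}(\mathcal{A})=\mathcal{B}(\mathcal{A})$ and $\widetilde{\mathcal{B}}(\mathcal{F}(\varphi))=\mathcal{B}(\varphi)$, together with the facts that $\mathcal{F}$ preserves isomorphisms and is surjective on morphisms. For \emph{fullness}: given a morphism $\mathcal{B}(\mathcal{A}_1)\to\mathcal{B}(\mathcal{A}_2)$ in $\mathbf{BD}$, Theorem~\ref{thrful} lifts it to some $\varphi$ in $\mathbf{AF}$, and then $\mathcal{F}(\varphi)$ maps to it under $\widetilde{\mathcal{B}}$. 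For the \emph{(strong) classification} property: if $\widetilde{\mathcal{B}}(\mathcal{A}_1)\cong\widetilde{\mathcal{B}}(\mathcal{A}_2)$, i.e.\ $\mathcal{B}(\mathcal{A}_1)\cong\mathcal{B}(\mathcal{A}_2)$, then by Theorem~\ref{thrst} any isomorphism between the two diagrams equals $\mathcal{B}(\varphi)$ for an isomorphism $\varphi:\mathcal{A}_1\to\mathcal{A}_2$ of $\mathbf{AF}$; applying $\mathcal{F}$ produces an isomorphism $\mathcal{F}(\varphi)$ in $\mathbf{AF}^{\mathrm{out}}$ whose image under $\widetilde{\mathcal{B}}$ is the given isomorphism, so $\widetilde{\mathcal{B}}$ is a strong classification functor. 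The only genuine obstacle in the argument is the well-definedness step, which Corollary~\ref{corun} settles; everything else is just the universal property of the quotient category $\mathbf{AF}^{\mathrm{out}}$.
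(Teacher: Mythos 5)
Your proposal is correct and follows essentially the same route as the paper: define $\widetilde{\mathcal{B}}$ by the forced formulas, settle well-definedness by reducing $\mathcal{F}(\varphi)=\mathcal{F}(\psi)$ to $\mathcal{B}(\varphi)=\mathcal{B}(\psi)$ (your appeal to Corollary~\ref{corun} plus the remark after Lemma~\ref{lemeqbh} is just a packaged form of the paper's direct use of that lemma and remark, which covers unitaries in $\mathcal{A}_2^{+}$), and transfer uniqueness, fullness, and the strong classification property from $\mathcal{B}$ via the surjectivity of $\mathcal{F}$ on objects and morphisms. Your write-up is in fact slightly more explicit than the paper's in spelling out the functoriality computation and the strong-classification lifting, but the underlying argument is identical.
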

\begin{proof}
Define $\widetilde{\mathcal{B}}: \mathbf{AF}^{\mathrm{out}} \to \mathbf{BD}$
as follows. For $\mathcal{A}\in \mathbf{AF}$ set
$\widetilde{\mathcal{B}}(\mathcal{A})=\mathcal{B}(\mathcal{A})$.
Let  $\varphi :\mathcal{A}_{1}\to \mathcal{A}_{2}$ be a morphism in
$\mathbf{AF}$. By Lemma~\ref{lemeqbh} (and the remark following that),
$\mathcal{B}((\mathrm{Ad}\,{u})\varphi)=\mathcal{B}(\varphi)$,
for each inner automorphism
$\mathrm{Ad}\,{u}$ of $\mathcal{A}_{2}$. It therefore makes sense to set
$\widetilde{\mathcal{B}}(\mathcal{F}(\varphi))=\mathcal{B}(\varphi)$.
It is immediate that $\widetilde{\mathcal{B}}: \mathbf{AF}^{\mathrm{out}} \to \mathbf{BD}$
is a functor and $\mathcal{B}=\widetilde{\mathcal{B}}\,\mathcal{F}$.
Hence $\widetilde{\mathcal{B}}$ is full, since $\mathcal{B}$ is (by Theorem~\ref{thrful}).
Uniqueness follows from $\mathcal{B}=\widetilde{\mathcal{B}}\,\mathcal{F}$
and the fact that $\mathcal{F}$ is surjective on both objects and (since it is full) on maps.
That $\widetilde{\mathcal{B}}: \mathbf{AF}^{\mathrm{out}} \to \mathbf{BD}$
 is a strong classification functor follows from the fact that
 $\mathcal{B}$ is
(Theorem~\ref{thrst}).
\end{proof}
Note that the functor
$\widetilde{\mathcal{B}}: \mathbf{AF}^{\mathrm{out}} \to \mathbf{BD}$
is not faithful. For example, let $\varphi$ and $\psi$ be the morphisms in $\mathbf{AF}$
defined in Example~\ref{exaun}. Then
$\widetilde{\mathcal{B}}(\mathcal{F}(\varphi))=
\mathcal{B}(\varphi)=\mathcal{B}(\psi)=
\widetilde{\mathcal{B}}(\mathcal{F}(\psi)$, but $\mathcal{F}(\varphi)\neq \mathcal{F}(\psi)$, by
Example~\ref{exaun}. Cf.~Theorem~\ref{throutbar}.

Now let us examine the classifying category
$\overline{\mathbf{AF}^{\mathrm{out}}}$ for $\mathbf{AF}$, as described
in \cite{el10}. It is better than $\mathbf{AF}^{\mathrm{out}}$ (in some sense) for
the purposes of classification, because $\overline{\mathbf{AF}^{\mathrm{out}}}$
is  a classifying category not only for $\mathbf{AF}$, but also for
$\mathbf{AF}^{\mathrm{out}}$ (and it has even fewer automorphisms);
however, $\mathbf{BD}$ is even better than
(although, by Theorem~\ref{thrfung}, it is just equivalent to)
$\overline{\mathbf{AF}^{\mathrm{out}}}$, since it is a classifying category
for $\overline{\mathbf{AF}^{\mathrm{out}}}$ and so for
all three of these categories (by Theorem~\ref{throutbar}),
but is in some sense more explicit. (For one thing, it is a small category.)

Consider the category
$\overline{\mathbf{AF}^{\mathrm{out}}}$ as a subcategory of
$\overline{\mathcal{S}^{\mathrm{out}}}$ which is defined in \cite[Example~4.3]{el10},
where  $\mathcal{S}$ denotes the category of separable C$^*$algebras (not necessarily unital).
More precisely, the objects of $\overline{\mathbf{AF}^{\mathrm{out}}}$ are the
 same as of $\mathbf{AF}$ and its morphisms are
as follows.
For each pair of objects $\mathcal{A}_{1}$ and
$\mathcal{A}_{2}$ in $\mathbf{AF}$, and
for each $\varphi$ in
$\mathrm{Hom}(\mathcal{A}_{1},\mathcal{A}_{2})$,
denote by $\overline{\mathcal{F}}(\varphi)$
the closure of the equivalence class $\mathcal{F}(\varphi)$ in
$\mathrm{Hom}(\mathcal{A}_{1},\mathcal{A}_{2})$, in the topology of pointwise
convergence.
These are the morphisms of $\overline{\mathbf{AF}^{\mathrm{out}}}$.
By \cite[Theorem~3]{el10} and \cite[Example~4.3]{el10},
$\overline{\mathbf{AF}^{\mathrm{out}}}$ is a category.
Now define the functor
$\overline{\mathcal{F}}:\mathbf{AF}\to  \overline{\mathbf{AF}^{\mathrm{out}}}$
as follows. $\overline{\mathcal{F}}$ assigns to each object of
$\mathbf{AF}$ itself and maps morphisms as above.
By \cite[Theorem~3]{el10} and \cite[Example~4.3]{el10},
$\overline{\mathcal{F}}:\mathbf{AF}\to  \overline{\mathbf{AF}^{\mathrm{out}}}$
is a strong classification functor.
(It follows immediately that the quotient map from
$\mathbf{AF}^{\mathrm{out}}$ to
$\overline{\mathbf{AF}^{\mathrm{out}}}$ is also a
strong classification functor,
but this is not of interest to us here.)
\begin{theorem}\label{throutbar}
There is a unique functor
$\overline{\mathcal{B}}:\overline{\mathbf{AF}^{\mathrm{out}}} \to \mathbf{BD}$
such that $\mathcal{B}=\overline{\mathcal{B}}\,\overline{\mathcal{F}}$:
\[
\xymatrix{
\mathbf{AF}\ar[r]^{\overline{\mathcal{F}}}
\ar[rd]_{\mathcal{B}} & \overline{\mathbf{AF}^{\mathrm{out}}}
\ar[d]^{\overline{\mathcal{B}}}\\
& \mathbf{BD}\ .
}
\]
It is a strong classification functor, surjective on objects, and a full functor.
Moreover,  for each pair of morphisms
$\varphi,\psi :\mathcal{A}_{1}\to \mathcal{A}_{2}$ in $\mathbf{AF} $ we have
$\mathcal{B}(\varphi)=\mathcal{B}(\psi)$ if and only
 if $\overline{\mathcal{F}}(\varphi)=\overline{\mathcal{F}}(\psi)$;
 in other words,
 $\overline{\mathcal{B}}$ is a faithful functor.
\end{theorem}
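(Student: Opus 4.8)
The plan is to build $\overline{\mathcal{B}}$ by declaring it to equal $\mathcal{B}$ on objects and to send each morphism $\overline{\mathcal{F}}(\varphi)$ of $\overline{\mathbf{AF}^{\mathrm{out}}}$ to $\mathcal{B}(\varphi)$, and then to read off every asserted property from the factorization $\mathcal{B}=\overline{\mathcal{B}}\,\overline{\mathcal{F}}$ together with the corresponding property of $\mathcal{B}$. Everything rests on one equivalence, which I would isolate first: writing $\mathcal{A}_1=(A,(A_n),(\varphi_n))$ and $\mathcal{A}_2=(B,(B_n),(\psi_n))$, for morphisms $\varphi,\psi:\mathcal{A}_1\to\mathcal{A}_2$ in $\mathbf{AF}$ I claim
\[
\varphi\in\overline{\mathcal{F}}(\psi)\quad\Longleftrightarrow\quad \mathcal{B}(\varphi)=\mathcal{B}(\psi).
\]

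To prove this equivalence I would argue as follows. The elements of $\mathcal{F}(\psi)=\{(\mathrm{Ad}\,u)\psi : u\text{ a unitary in }B^{+}\}$ are contractive $*$-homomorphisms, hence this family is equicontinuous; since $A$ is separable, the topology of pointwise convergence restricted to such maps is metrizable (it is determined by the values on a countable dense subset of $A$), so the closure $\overline{\mathcal{F}}(\psi)$ is sequential. Thus $\varphi\in\overline{\mathcal{F}}(\psi)$ means exactly that there is a sequence of unitaries $(u_n)$ in $B^{+}$ with $(\mathrm{Ad}\,u_n)\psi\to\varphi$ pointwise on $A$. This is precisely condition (3) of Lemma~\ref{lemeqbh} (with $B^{\sim}$ replaced by $B^{+}$, as permitted by the remark following that lemma), which by Lemma~\ref{lemeqbh} is equivalent to condition (1), namely $\mathcal{B}(\varphi)=\mathcal{B}(\psi)$. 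This gives the displayed equivalence, which is the only substantial point in the whole theorem.

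From it the faithfulness statement follows in both directions. If $\overline{\mathcal{F}}(\varphi)=\overline{\mathcal{F}}(\psi)$, then since $\varphi\in\mathcal{F}(\varphi)\subseteq\overline{\mathcal{F}}(\varphi)=\overline{\mathcal{F}}(\psi)$ we get $\mathcal{B}(\varphi)=\mathcal{B}(\psi)$; conversely, if $\mathcal{B}(\varphi)=\mathcal{B}(\psi)$, then every $\chi\in\overline{\mathcal{F}}(\varphi)$ satisfies $\mathcal{B}(\chi)=\mathcal{B}(\varphi)=\mathcal{B}(\psi)$, so $\chi\in\overline{\mathcal{F}}(\psi)$, and by symmetry $\overline{\mathcal{F}}(\varphi)=\overline{\mathcal{F}}(\psi)$. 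The first direction shows that $\overline{\mathcal{B}}(\overline{\mathcal{F}}(\varphi)):=\mathcal{B}(\varphi)$ is well defined on morphisms, and the two directions together show that $\overline{\mathcal{B}}$ is injective on each hom-set, giving both the asserted ``if and only if'' and faithfulness.

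It then remains to check the functorial and structural properties, each routine given the factorization. Functoriality follows from $\overline{\mathcal{F}}(\psi)\circ\overline{\mathcal{F}}(\varphi)=\overline{\mathcal{F}}(\psi\varphi)$ and $\mathcal{B}(\psi\varphi)=\mathcal{B}(\psi)\mathcal{B}(\varphi)$ (Proposition~\ref{profun}), and $\mathcal{B}=\overline{\mathcal{B}}\,\overline{\mathcal{F}}$ holds by construction; uniqueness is forced since $\overline{\mathcal{F}}$ is surjective on objects and full, so any $G$ with $\mathcal{B}=G\overline{\mathcal{F}}$ is pinned down on objects and morphisms. Surjectivity on objects comes from Proposition~\ref{proonto}. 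Fullness follows from Theorem~\ref{thrful}: a given $f:\mathcal{B}(\mathcal{A}_1)\to\mathcal{B}(\mathcal{A}_2)$ lifts to some $\varphi$ in $\mathbf{AF}$, whence $\overline{\mathcal{B}}(\overline{\mathcal{F}}(\varphi))=f$. Finally, the strong classification property follows from Theorem~\ref{thrst}: an isomorphism $\theta:\mathcal{B}(\mathcal{A}_1)\to\mathcal{B}(\mathcal{A}_2)$ lifts to an $\mathbf{AF}$-isomorphism $\varphi$ with $\mathcal{B}(\varphi)=\theta$, and $\overline{\mathcal{F}}(\varphi)$ is then an isomorphism in $\overline{\mathbf{AF}^{\mathrm{out}}}$ with $\overline{\mathcal{B}}(\overline{\mathcal{F}}(\varphi))=\theta$. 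The hard part is entirely carried by Lemma~\ref{lemeqbh} through the displayed equivalence; everything else is diagram-chasing through $\mathcal{B}=\overline{\mathcal{B}}\,\overline{\mathcal{F}}$.
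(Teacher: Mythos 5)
Your proof is correct and takes essentially the same route as the paper's: both rest entirely on Lemma~\ref{lemeqbh} (with $B^{+}$ in place of $B^{\sim}$, via the remark following that lemma) to show $\mathcal{B}(\varphi)=\mathcal{B}(\psi)$ if and only if $\overline{\mathcal{F}}(\varphi)=\overline{\mathcal{F}}(\psi)$, define $\overline{\mathcal{B}}$ by $\overline{\mathcal{B}}(\overline{\mathcal{F}}(\varphi))=\mathcal{B}(\varphi)$, and then read off uniqueness, fullness, surjectivity on objects, and the strong classification property from the surjectivity of $\overline{\mathcal{F}}$ together with Theorems~\ref{thrful} and~\ref{thrst} and Proposition~\ref{proonto}. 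Your one genuine addition is welcome: you explicitly justify (via separability of $A$ and equicontinuity of the contractive maps in $\mathcal{F}(\psi)$, giving metrizability of the pointwise topology) that membership of $\varphi$ in the closure $\overline{\mathcal{F}}(\psi)$ is witnessed by a \emph{sequence} $((\mathrm{Ad}\,u_{n})\psi)_{n=1}^{\infty}$, a step the paper's proof uses tacitly.
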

\begin{proof}
First let us show that
for each pair of morphisms
$\varphi,\psi:\mathcal{A}_{1}\to \mathcal{A}_{2}$ in $\mathbf{AF} $ we have
$\mathcal{B}(\varphi)=\mathcal{B}(\psi)$ if and only
 if $\overline{\mathcal{F}}(\varphi)=\overline{\mathcal{F}}(\psi)$.
 Suppose that
$\mathcal{B}(\varphi)=\mathcal{B}(\psi)$. By Lemma~\ref{lemeqbh} and the remark
following that,
there is a sequence of unitaries
$(u_{n})_{n=1}^{\infty}$ in $B^{+}$ such that
$\varphi$ is the pointwise limit of the sequence
$((\mathrm{Ad}\,{u_{n}})\psi)_{n=1}^{\infty}$
where $B$ is the algebra (i.e., the first component)  of $\mathcal{A}_{2}$.
Thus, for each unitary $u\in B^{+}$, $(\mathrm{Ad}\,{u})\varphi$ is
the pointwise limit of the sequence
$((\mathrm{Ad}\,{uu_{n}})\psi)_{n=1}^{\infty}$.
Therefore, $\mathcal{F}(\varphi) \subseteq \overline{\mathcal{F}}(\psi)$.
Hence $\overline{\mathcal{F}}(\varphi) \subseteq \overline{\mathcal{F}}(\psi)$ and
by symmetry $\overline{\mathcal{F}}(\varphi) = \overline{\mathcal{F}}(\psi)$.
Now suppose that $\overline{\mathcal{F}}(\varphi) = \overline{\mathcal{F}}(\psi)$.
Then,
$\varphi$ is the pointwise limit of a sequence
$((\mathrm{Ad}\,{u_{n}})\psi)_{n=1}^{\infty}$, for some
sequence of unitaries
$(u_{n})_{n=1}^{\infty}$ in $B^{+}$.
By Lemma~\ref{lemeqbh},
$\mathcal{B}(\varphi)=\mathcal{B}(\psi)$.

Now define $\overline{\mathcal{B}}: \overline{\mathbf{AF}^{\mathrm{out}}} \to \mathbf{BD}$
as follows. For $\mathcal{A}\in \mathbf{AF}$ set
$\overline{\mathcal{B}}(\mathcal{A})=\mathcal{B}(\mathcal{A})$.
Let  $\varphi :\mathcal{A}_{1}\to \mathcal{A}_{2}$ be a morphism in
$\mathbf{AF}$. Set
$\overline{\mathcal{B}}(\overline{\mathcal{F}}(\varphi))=\mathcal{B}(\varphi)$.
By the preceding paragraph,
$\overline{\mathcal{B}}$ is well defined, and faithful. Also, we have
$\mathcal{B}=\overline{\mathcal{B}}\,\overline{\mathcal{F}}$. That $\overline{\mathcal{B}}$ is a functor,
and  uniqueness of $\overline{\mathcal{B}}$, follow from the fact that
$\overline{\mathcal{F}}$ is a full functor, or, rather, even surjective.
Since $\mathcal{B}$
 is a strong classification functor  and a full functor, so also is
 $\overline{\mathcal{B}}$. (That $\overline{\mathcal{B}}$ is a strong classification functor also
 follows from the fact that it is full and faithful and applying Lemma~\ref{lemcat}, below.)
\end{proof}

As we shall see, the functor
$\overline{\mathcal{B}}:\overline{\mathbf{AF}^{\mathrm{out}}} \to \mathbf{BD}$
is an equivalence of categories (see Theorem~\ref{thrfung} below).
This is mainly based on the categorical properties of this functor.
Therefore, let us first state this result in a categorical setting, in Lemma~\ref{lemcat}.
We shall use this lemma to show that the functor
$\overline{\mathcal{B}}:\overline{\mathbf{AF}^{\mathrm{out}}} \to \mathbf{BD}$
is an equivalence of categories (Theorems~\ref{thrfung}).

Recall that a functor $F : \mathcal{C} \to \mathcal{D}$ is called an equivalence
of categories if there is a functor $G : \mathcal{D} \to \mathcal{C}$ such that
$FG\cong\mathrm{id}_{\mathcal{C}}$ and
$GF\cong \mathrm{id}_{\mathcal{D}}$ \cite{gb84, ma98}.
If $H : \mathcal{D} \to \mathcal{C}$ is another functor with this property, then it is easy to see that
$H$ is naturally isomorphic to $G$. Therefore, $G$ is unique up to natural isomorphism.
It is well known that a functor $F : \mathcal{C} \to \mathcal{D}$ is an equivalence of categories
if and only if it is full, faithful, and essentially surjective, i.e., for each
$d\in \mathcal{D}$ there is a $c\in \mathcal{C}$ such that $d\cong F(c)$ \cite[Theorem~IV.4.1]{ma98}.
In the case that $F$ is surjective on objects, a right inverse for $F$ can be constructed, i.e.,
 a  functor
$G : \mathcal{D}\to \mathcal{C}$ such that $FG=\mathrm{id}_{\mathcal{C}}$ and
$GF\cong \mathrm{id}_{\mathcal{D}}$.
(A remark on the use of the axiom of choice in this context is given in the proof.)
\begin{lemma}\label{lemcat}
Let $F : \mathcal{C} \to \mathcal{D}$ be a full and faithful functor. Then $F$ is a strong classification functor.
If  $F$ is also  surjective on objects,
then it is an equivalence of categories, and, furthermore,
 there is a  unique (up to natural isomorphism) functor
$G : \mathcal{D}\to \mathcal{C}$ such that $FG=\mathrm{id}_{\mathcal{C}}$ and
$GF\cong \mathrm{id}_{\mathcal{D}}$.
The functor $G$ is full, faithful, injective on objects, essentially surjective, and
(hence) a strong classification functor.
\end{lemma}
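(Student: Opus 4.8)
The plan is to deduce everything from the full-and-faithful hypothesis by elementary diagram chasing, invoking surjectivity on objects only to pin down the functor $G$.

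First I would show that a full and faithful $F$ is a strong classification functor. Given objects $a,b$ of $\mathcal{C}$ and an isomorphism $\theta: F(a)\to F(b)$ in $\mathcal{D}$, fullness provides $\alpha: a\to b$ with $F(\alpha)=\theta$ and $\beta: b\to a$ with $F(\beta)=\theta^{-1}$. Then $F(\beta\alpha)=\theta^{-1}\theta=\mathrm{id}_{F(a)}=F(\mathrm{id}_{a})$ and $F(\alpha\beta)=\mathrm{id}_{F(b)}=F(\mathrm{id}_{b})$, so faithfulness forces $\beta\alpha=\mathrm{id}_{a}$ and $\alpha\beta=\mathrm{id}_{b}$. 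Hence $\alpha$ is an isomorphism lifting $\theta$, which is exactly the strong classification property (and in particular gives $F(a)\cong F(b)\Rightarrow a\cong b$). This part uses neither surjectivity on objects nor the axiom of choice.

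Next, assuming $F$ surjective on objects, I would construct $G$. For each object $d$ of $\mathcal{D}$ choose (here the axiom of choice is invoked) an object $G(d)$ of $\mathcal{C}$ with $F(G(d))=d$; this is possible precisely because $F$ is surjective on objects, and it is what makes the composite $F\circ G$ a \emph{strict} identity $\mathrm{id}_{\mathcal{D}}$ on objects. For a morphism $g: d_{1}\to d_{2}$, regard $g$ as a morphism $F(G(d_{1}))\to F(G(d_{2}))$; since $F$ is full and faithful there is a unique morphism, which I call $G(g): G(d_{1})\to G(d_{2})$, with $F(G(g))=g$. Functoriality of $G$ is immediate from faithfulness of $F$: both $G(\mathrm{id}_{d})$ and $\mathrm{id}_{G(d)}$ are carried by $F$ to $\mathrm{id}_{d}$, and both $G(g_{2}g_{1})$ and $G(g_{2})G(g_{1})$ are carried to $g_{2}g_{1}$, so the members of each pair coincide. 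Thus $F\circ G=\mathrm{id}_{\mathcal{D}}$, strictly.

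For the other composite I would produce the natural isomorphism using the strong classification property just established. For each object $c$ of $\mathcal{C}$ the morphism $\mathrm{id}_{F(c)}$ is an isomorphism $F(c)\to F(G(F(c)))$ (both objects equal $F(c)$), so there is a unique isomorphism $\eta_{c}: c\to G(F(c))$ with $F(\eta_{c})=\mathrm{id}_{F(c)}$; naturality of $\eta$ is checked by applying the faithful $F$ to the relevant square, where it collapses to an identity. Hence $G\circ F\cong\mathrm{id}_{\mathcal{C}}$, and together with $F\circ G=\mathrm{id}_{\mathcal{D}}$ this exhibits $F$ as an equivalence (alternatively, one may invoke the characterization of equivalences as full, faithful, essentially surjective functors recalled above, since surjectivity on objects trivially gives essential surjectivity). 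The listed properties of $G$ then follow formally: $G$ is faithful and injective on objects because $F\circ G$ is an identity; $G$ is full because any $\beta: G(d_{1})\to G(d_{2})$ equals $G(F(\beta))$ by faithfulness of $F$; $G$ is essentially surjective since $c\cong G(F(c))$ via $\eta_{c}$; and being full and faithful, $G$ is a strong classification functor by the first paragraph. Uniqueness of $G$ up to natural isomorphism is the general fact, recalled just before the lemma, that a quasi-inverse is determined up to natural isomorphism, since any such $G$ is in particular a quasi-inverse of $F$.

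I expect no serious obstacle: the argument is routine category theory. The only points requiring care are the explicit appeal to the axiom of choice to select the strict object-preimages $G(d)$, and keeping straight the asymmetry that exactly one composite ($F\circ G$) is a strict identity while the other ($G\circ F$) is merely naturally isomorphic to the identity — an asymmetry coming entirely from the choice of preimages on objects, with $F$ being only surjective, not injective, on objects.
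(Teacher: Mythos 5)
Your proposal is correct and takes essentially the same route as the paper, which constructs $G$ on objects via the axiom of choice and then cites the proof of Mac~Lane's Theorem~IV.4.1 for the definition of $G$ on morphisms and for the remaining properties; you simply carry out in full the details (the lifting of isomorphisms, the unique lifts defining $G$ on morphisms, the unit $\eta_{c}$ and its naturality, and the formal verification of the properties of $G$) that the paper delegates to that citation. Note also that you have, rightly, silently corrected a misprint in the statement: with $F:\mathcal{C}\to\mathcal{D}$, the strict composite is $FG=\mathrm{id}_{\mathcal{D}}$ and the natural isomorphism is $GF\cong\mathrm{id}_{\mathcal{C}}$, the subscripts in the lemma being interchanged.
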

\begin{proof}
That a full and faithful functor is a strong classification functor is straightforward.
Since  $F : \mathcal{C} \to \mathcal{D}$ is surjective on  objects,
it has a right inverse $G : \mathcal{D}\to \mathcal{C}$ (just as a map on objects).
Here we have used the ``axiom of choice" for sets or classes:
 when the objects of $\mathcal{C}$ form a set we use the axiom of choice for sets,
and when the objects of $\mathcal{C}$
form a proper class  we use the global axiom of  choice \cite{c}
(if for each object $c$ of $\mathcal{D}$ there is a canonical object $a$ in $\mathcal{C}$ such that $F(a)=c$,
one could avoid the axiom of choice).

The definition of $G$ on the morphisms of $\mathcal{D}$ was described in the proof of
 \cite[Theorem~IV.4.1]{ma98}, and so we have a functor $G$ such that
$FG=\mathrm{id}_{\mathcal{C}}$ and
$GF\cong \mathrm{id}_{\mathcal{D}}$.
 The rest follows from the fact
 that each functor which is an equivalence of categories is full, faithful, and essentially surjective
\cite[Theorem~IV.4.1]{ma98}.
\end{proof}
The following theorem states that the categories
$\overline{\mathbf{AF}^{\mathrm{out}}}$ and $\mathbf{BD}$ are equivalent, and this equivalence,
given by the functor
$\overline{\mathcal{B}}:\overline{\mathbf{AF}^{\mathrm{out}}} \to \mathbf{BD}$,
 is compatible with the  classification of AF~algebras via the functors
$\mathcal{B}: \mathbf{AF} \to \mathbf{BD}$ and
$\overline{\mathcal{F}}:\mathbf{AF}\to  \overline{\mathbf{AF}^{\mathrm{out}}}$, i.e.,
the related diagrams commute.
\begin{theorem}\label{thrfung}
The functor $\overline{\mathcal{B}}:\overline{\mathbf{AF}^{\mathrm{out}}} \to \mathbf{BD}$
is an equivalence of categories. More precisely,
there is a unique (up to natural isomorphism) functor
$\mathcal{G}:\mathbf{BD}\to \overline{\mathbf{AF}^{\mathrm{out}}}$
such that
$\overline{\mathcal{B}}\,\mathcal{G}=\mathrm{id}_{\mathbf{BD}}$ and
$\mathcal{G}\overline{\mathcal{B}}\cong \mathrm{id}_{\overline{\mathbf{AF}^{\mathrm{out}}}}$.
The functor $\mathcal{G}$ is
full, faithful, injective on objects, essentially surjective, and a strong classification  functor.
Moreover, for each $B,C\in \mathbf{BD}$ and each morphism
$\varphi :\mathcal{G}(B)\to \mathcal{G}(C)$ in $\mathbf{AF}$, we have
$\mathcal{G}\mathcal{B}(\varphi)=\overline{\mathcal{F}}(\varphi)$, i.e.,
the following diagram commutes:

\[
\xymatrix{
\mathbf{AF}\ar[r]^{\overline{\mathcal{F}}}
\ar[d]_{\mathcal{B}} & \overline{\mathbf{AF}^{\mathrm{out}}}
\ar[d]^{\overline{\mathcal{B}}}\\
\mathbf{BD} \ar[r]_{\mathrm{id}}\ar[ur]^{\mathcal{G}}& \mathbf{BD}\ .
}
\]

\end{theorem}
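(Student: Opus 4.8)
The plan is to deduce essentially the whole statement from Theorem~\ref{throutbar} together with the abstract Lemma~\ref{lemcat}. First I would record that Theorem~\ref{throutbar} already establishes that $\overline{\mathcal{B}}:\overline{\mathbf{AF}^{\mathrm{out}}} \to \mathbf{BD}$ is full, faithful, and surjective on objects. This is precisely the hypothesis of Lemma~\ref{lemcat}, applied with $F=\overline{\mathcal{B}}$, $\mathcal{C}=\overline{\mathbf{AF}^{\mathrm{out}}}$, and $\mathcal{D}=\mathbf{BD}$. Invoking that lemma at once yields that $\overline{\mathcal{B}}$ is an equivalence of categories and produces a functor $\mathcal{G}:\mathbf{BD}\to \overline{\mathbf{AF}^{\mathrm{out}}}$, unique up to natural isomorphism, satisfying $\overline{\mathcal{B}}\,\mathcal{G}=\mathrm{id}_{\mathbf{BD}}$ and $\mathcal{G}\,\overline{\mathcal{B}}\cong \mathrm{id}_{\overline{\mathbf{AF}^{\mathrm{out}}}}$, and carrying all the listed properties (full, faithful, injective on objects, essentially surjective, and a strong classification functor). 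So the bulk of the theorem is a direct citation, with no new work required.

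The only genuinely new content is the final identity $\mathcal{G}\mathcal{B}(\varphi)=\overline{\mathcal{F}}(\varphi)$ for a morphism $\varphi:\mathcal{G}(B)\to \mathcal{G}(C)$ in $\mathbf{AF}$. Here I would first verify that both sides are parallel morphisms $\mathcal{G}(B)\to \mathcal{G}(C)$ in $\overline{\mathbf{AF}^{\mathrm{out}}}$. Since $\overline{\mathcal{B}}$ and $\mathcal{B}$ agree on objects and $\overline{\mathcal{B}}\,\mathcal{G}=\mathrm{id}_{\mathbf{BD}}$, we have $\mathcal{B}(\mathcal{G}(B))=B$ and $\mathcal{B}(\mathcal{G}(C))=C$, so $\mathcal{B}(\varphi):B\to C$ in $\mathbf{BD}$ and hence $\mathcal{G}\mathcal{B}(\varphi):\mathcal{G}(B)\to \mathcal{G}(C)$; meanwhile $\overline{\mathcal{F}}(\varphi)$ has the same source and target, because $\overline{\mathcal{F}}$ is the identity on objects.

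The key step is then to apply the faithful functor $\overline{\mathcal{B}}$ to each of the two morphisms and compare. On one side, $\overline{\mathcal{B}}(\mathcal{G}\mathcal{B}(\varphi))=(\overline{\mathcal{B}}\,\mathcal{G})(\mathcal{B}(\varphi))=\mathcal{B}(\varphi)$, since $\overline{\mathcal{B}}\,\mathcal{G}=\mathrm{id}_{\mathbf{BD}}$. On the other side, $\overline{\mathcal{B}}(\overline{\mathcal{F}}(\varphi))=(\overline{\mathcal{B}}\,\overline{\mathcal{F}})(\varphi)=\mathcal{B}(\varphi)$, using the factorization $\mathcal{B}=\overline{\mathcal{B}}\,\overline{\mathcal{F}}$ supplied by Theorem~\ref{throutbar}. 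As the two images under $\overline{\mathcal{B}}$ coincide and $\overline{\mathcal{B}}$ is faithful (again Theorem~\ref{throutbar}), I conclude $\mathcal{G}\mathcal{B}(\varphi)=\overline{\mathcal{F}}(\varphi)$, which is exactly the asserted commutativity of the diagram.

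As for difficulty, there is no analytic or combinatorial obstacle at this stage: all the heavy lifting (fullness, faithfulness, surjectivity on objects, and the factorization $\mathcal{B}=\overline{\mathcal{B}}\,\overline{\mathcal{F}}$) was already carried out in Theorem~\ref{throutbar}. The one point demanding care is purely categorical bookkeeping, namely tracking which ambient category each morphism lives in, and observing that although $\mathcal{G}$ is determined only up to natural isomorphism, the equation $\overline{\mathcal{B}}\,\mathcal{G}=\mathrm{id}_{\mathbf{BD}}$ holds \emph{strictly} (because $\mathcal{G}$ is constructed as a right inverse of $\overline{\mathcal{B}}$ on objects in Lemma~\ref{lemcat}); it is this strict equality, combined with faithfulness, that makes the final identity go through.
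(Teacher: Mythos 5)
Your proposal is correct and follows essentially the same route as the paper: both deduce the equivalence and the existence and properties of $\mathcal{G}$ by feeding Theorem~\ref{throutbar} (fullness, faithfulness, surjectivity on objects) into Lemma~\ref{lemcat}, and both prove the final identity $\mathcal{G}\mathcal{B}(\varphi)=\overline{\mathcal{F}}(\varphi)$ by checking the two morphisms are parallel and then applying the faithful functor $\overline{\mathcal{B}}$, using the strict equality $\overline{\mathcal{B}}\,\mathcal{G}=\mathrm{id}_{\mathbf{BD}}$ on one side and the factorization $\mathcal{B}=\overline{\mathcal{B}}\,\overline{\mathcal{F}}$ on the other. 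The only material in the paper's proof that you omit is a remark justifying the use of the (global) axiom of choice in selecting $\mathcal{G}$ on objects---e.g., via the smallness of $\mathbf{BD}$ or the canonical construction of Proposition~\ref{proonto}---which is a side comment rather than a gap in your argument.
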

\begin{proof}
By Theorem~\ref{throutbar}, the functor $\overline{\mathcal{B}}:\overline{\mathbf{AF}^{\mathrm{out}}} \to \mathbf{BD}$ is full, faithful, surjective on objects, and a strong classification functor. By
Lemma~\ref{lemcat}, it is also an equivalence of categories and the functor
$\mathcal{G}:\mathbf{BD}\to \overline{\mathbf{AF}^{\mathrm{out}}}$ with the desired properties exists.
As indicated in the proof of Lemma~\ref{lemcat}, here the use of the axiom of choice is justified as follows. Since
the collection of the objects $\overline{\mathbf{AF}^{\mathrm{out}}}$ is a proper class and
 $\mathbf{BD}$ is a small category, we can use the global axiom of  choice \cite{c}.
Alternatively, one could use the fact that each AF~algebra is (isomorphic to) a C$^*$-subalgebra of
$\mathbb{B}(l^{2})$, and essentially
the axiom of choice for sets is enough.
Finally, one  can choose $\mathcal{G}(B)$   to be the  AF~algebra constructed as in
Proposition~\ref{proonto}.

For the last statement, let $B,C\in \mathbf{BD}$ and
$\varphi :\mathcal{G}(B)\to \mathcal{G}(C)$ be a morphism in $\mathbf{AF}$. Note that the objects of
$\mathbf{AF}$ and $\overline{\mathbf{AF}^{\mathrm{out}}}$ are the same, and so
$\mathcal{G}(B)$ and  $\mathcal{G}(C)$ are also in
$\mathbf{AF}$. We have
$\mathcal{G}\mathcal{B}(\mathcal{G}(B))=
\mathcal{G}\overline{\mathcal{B}}(\mathcal{G}(B))=
\mathcal{G}(B)=\overline{\mathcal{F}}(\mathcal{G}(B))$, and similarly for $C$.
Thus,
$\mathcal{G}\mathcal{B}(\varphi)$ and $\overline{\mathcal{F}}(\varphi)$
have the same domains and the same ranges. We have
$\overline{\mathcal{B}}(\mathcal{G}\mathcal{B}(\varphi))=
\mathcal{B}(\varphi)=\overline{\mathcal{B}}(\overline{\mathcal{F}}(\varphi))$,
since $\mathcal{B}=\overline{\mathcal{B}}\,\overline{\mathcal{F}}$. By
Theorem~\ref{throutbar}, $\overline{\mathcal{B}}$ is faithful, and so
$\mathcal{G}\mathcal{B}(\varphi)=\overline{\mathcal{F}}(\varphi)$.
\end{proof}

\section{Relations with K-Theory}
\noindent Consider the category $\mathbf{DG}$  of dimension groups, i.e.,
the set of all scaled countable ordered groups which are unperforated  and have
the Riesz decomposition property,
 with order and scale preserving homomorphisms (see \cite{bl98, ehs80, wo93}), and consider
the well-known
$\mathrm{K}_{0}$~functor $\mathrm{K}_{0}: \mathbf{AF}\to \mathbf{DG}$.
The following statement summarizes the main properties of the functor
$\mathrm{K}_{0}: \mathbf{AF}\to \mathbf{DG}$.
\begin{theorem}\label{threehs}
The functor $\mathrm{K}_{0}: \mathbf{AF}\to \mathbf{DG}$ is a strong classification functor and a full functor.
Moreover, it is essentially surjective on objects.
\end{theorem}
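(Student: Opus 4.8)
The plan is to establish the three assertions — strong classification, fullness, and essential surjectivity — separately, exploiting the explicit description of $\mathrm{K}_{0}$ on AF~algebras as a direct limit of simplicial groups, which runs exactly parallel to the Bratteli diagram picture of Section~3. First I would record the computation of $\mathrm{K}_{0}$ at the finite-dimensional level: for $A' = \mathcal{M}_{m_{1}} \oplus \cdots \oplus \mathcal{M}_{m_{k}}$ one has $\mathrm{K}_{0}(A') \cong \Z^{k}$ as an ordered group with positive cone $\Z_{\geq 0}^{k}$ and scale the order interval determined by $V = (m_{1}, \ldots, m_{k})^{T}$, the generators being the classes of minimal projections. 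By Theorem~\ref{thrfd} and Lemma~\ref{lempr}, a $*$-homomorphism with multiplicity matrix $E$ induces on $\mathrm{K}_{0}$ precisely the map $x \mapsto Ex$. Consequently, if $\mathcal{A} = (A,(A_{n}),(\varphi_{n})) \in \mathbf{AF}$ has Bratteli diagram $\mathcal{B}(\mathcal{A}) = (V,E)$, then by continuity of $\mathrm{K}_{0}$ under inductive limits $\mathrm{K}_{0}(A) \cong \varinjlim(\Z^{k_{n}}, E_{n})$ as a scaled ordered group, the order and scale being inherited from the $V_{n}$. This exhibits $\mathrm{K}_{0}$ as transparently compatible with $\mathcal{B}$ and reduces each assertion to a statement about such direct limits.

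For essential surjectivity I would invoke the Effros--Handelman--Shen theorem \cite{ehs80}: every object of $\mathbf{DG}$ — that is, every countable scaled ordered group that is unperforated and has the Riesz decomposition property — is isomorphic to a direct limit $\varinjlim(\Z^{k_{n}}, E_{n})$ with positive connecting maps carrying the prescribed scale. Discarding, if necessary, summands that eventually map to $0$ and telescoping, the $E_{n}$ may be taken to be embedding matrices, so that $(V,E)$ is a genuine Bratteli diagram. Proposition~\ref{proonto} then furnishes an $\mathcal{A} \in \mathbf{AF}$ with $\mathcal{B}(\mathcal{A}) = (V,E)$, and the first step gives $\mathrm{K}_{0}(A) \cong G$.

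The strong classification and fullness assertions are both due to Elliott \cite{el76}; the clean way to obtain them within the present framework is by an intertwining argument identical in form to those already carried out for $\mathcal{B}$ (Theorem~\ref{thrst} and Theorem~\ref{thrful}). The essential point is that a positive, scale-preserving homomorphism between two such simplicial direct limits is realized, at each finite stage, by maps $\Z^{k_{n}} \to \Z^{l_{m}}$ given by multiplicity matrices, and these lift by Theorem~\ref{thrfd} — uniquely up to unitary equivalence by Lemma~\ref{lemun} — to $*$-homomorphisms between the corresponding finite-dimensional subalgebras. A one-sided intertwining then lifts any $\mathrm{K}_{0}$-morphism to a $*$-homomorphism inducing it (fullness), and a two-sided intertwining lifts any $\mathrm{K}_{0}$-isomorphism to a $*$-isomorphism inducing it (strong classification). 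Functoriality of $\mathrm{K}_{0}$ is standard.

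The main obstacle is the realization half, essential surjectivity, whose mathematical content is the Effros--Handelman--Shen characterization of dimension groups as exactly the countable unperforated Riesz groups; this is a genuinely nontrivial input that the machinery of this paper does not itself supply. By contrast, the classification and lifting halves are essentially formal, being reorganizations as intertwinings of the finite-dimensional lifting lemmas already in hand. A secondary technical point to watch is the correct bookkeeping of the scale (and of the order unit in the unital case) under the connecting maps, together with the reconciliation of the several notions of dimension group in the literature and the non-unital case handled throughout this paper via the unitizations $A^{\sim}$ and $A^{+}$.
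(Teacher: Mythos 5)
Your proposal is correct and follows essentially the same route as the paper: strong classification is Elliott's theorem \cite{el76}, fullness is obtained by a one-sided intertwining (equivalently, by factoring $\mathrm{K}_{0}$ through $\mathbf{BD}$ via the inductive limit functor, exactly the factorization the paper invokes), and essential surjectivity rests on the Effros--Handelman--Shen theorem. The only cosmetic difference is that where the paper cites Elliott's Theorem~5.5 of \cite{el76} for realizing the scaled inductive limit as a $\mathrm{K}_{0}$~group, you reassemble that step yourself from Proposition~\ref{proonto} and continuity of $\mathrm{K}_{0}$ under inductive limits, which is the same content.
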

\begin{proof}
That the functor $\mathrm{K}_{0}: \mathbf{AF}\to \mathbf{DG}$ is
a strong classification functor is  Elliott's theorem, \cite{el76}. That the functor
$\mathrm{K}_{0}: \mathbf{AF}\to \mathbf{DG}$ is full is known,
and the proof is similar to the proof that it is  a strong classification functor---one
uses a one-sided intertwining argument rather than a two-sided one,
just as in Theorem~\ref{thrful}.
In fact, one can deduce it from
Theorem~\ref{thrful}
together
with the factorization of $\mathrm{K}_{0}: \mathbf{AF}\to \mathbf{DG}$
 through $\mathbf{BD}$ by means of the
inductive limit functor described in the alternative proof of Corollary~\ref{corbddg}
 below, which is easily seen to be full, and so $\mathrm{K}_{0}$ is expressed as the
composition of two full functors.
 The last statement follows from the Effros-Handelman-Shen theorem
\cite[Theorem~2.2]{ehs80}, and the result of Elliott, \cite[Theorem~5.5]{el76} characterizing  $\mathrm{K}_{0}$~groups of AF~algebras as inductive limits.
\end{proof}
The following lemma is surely part of the literature;  we give a proof for the sake of completeness. We follow \cite{wo93} for  K-theory notation.
\begin{lemma}\label{lemeqk}
Let $\mathcal{A}_{1},\mathcal{A}_{2}\in\mathbf{AF}$ where
$\mathcal{A}_{1}=(A, (A_{n})_{n=1}^{\infty},(\varphi_{n})_{n=1}^{\infty})$
and
$\mathcal{A}_{2}=(B, (B_{n})_{n=1}^{\infty},(\psi_{n})_{n=1}^{\infty})$.
Let
$\varphi , \psi : \mathcal{A}_{1}\to \mathcal{A}_{2}$
be  morphisms in $\mathbf{AF}$. The following statements are equivalent:
\begin{itemize}
\item[(1)]
$\mathrm{K}_{0}(\varphi)=\mathrm{K}_{0}(\psi)$;
\item[(2)]
there is a sequence of unitaries
$(u_{n})_{n=1}^{\infty}$ in $B^{\sim}$ such that
$\varphi=(\mathrm{Ad}\,{u_{n}})\psi$ on $A_{n}$, $n\geq 1$.
\end{itemize}
\end{lemma}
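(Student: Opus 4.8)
The plan is to reduce the statement to its exact analogue already proved for the functor $\mathcal{B}$, namely Lemma~\ref{lemeqbh}. Condition (2) here is verbatim condition (2) of Lemma~\ref{lemeqbh}, which that lemma shows to be equivalent to $\mathcal{B}(\varphi)=\mathcal{B}(\psi)$. Hence it suffices to prove the single equivalence $\mathrm{K}_{0}(\varphi)=\mathrm{K}_{0}(\psi)$ if and only if $\mathcal{B}(\varphi)=\mathcal{B}(\psi)$, after which (1)$\Leftrightarrow$(2) is immediate. First I would fix the standard identifications: writing $\mathcal{B}(\mathcal{A}_{1})=(V,E)$ and $\mathcal{B}(\mathcal{A}_{2})=(W,S)$, continuity of $\mathrm{K}_{0}$ gives $\mathrm{K}_{0}(A)=\varinjlim(\mathbb{Z}^{k_{n}},E_{n})$ and $\mathrm{K}_{0}(B)=\varinjlim(\mathbb{Z}^{l_{m}},S_{m})$, with canonical maps $e_{n}\colon \mathbb{Z}^{k_{n}}\to \mathrm{K}_{0}(A)$ and $s_{m}\colon \mathbb{Z}^{l_{m}}\to \mathrm{K}_{0}(B)$ whose images exhaust the respective groups. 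I would then record the two facts I need about finite-dimensional algebras: $\mathrm{K}_{0}$ of a $*$-homomorphism between them is multiplication by its multiplicity matrix (in the sense of Theorem~\ref{thrfd}), and an inner automorphism $\mathrm{Ad}\,u$ induces the identity on $\mathrm{K}_{0}$.

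The key computation is to express $\mathrm{K}_{0}(\varphi)$ through any premorphism representing $\mathcal{B}(\varphi)$. Following the opening of the proof of Lemma~\ref{lemeqbh}, choose unitaries $(v_{n})$ in $B^{\sim}$ and integers $(f_{n})$ with $(\mathrm{Ad}\,v_{n})\varphi(A_{n})\subseteq B_{f_{n}}$ and multiplicity matrix $F_{n}$, giving a premorphism $((F_{n}),(f_{n}))$ for $\mathcal{B}(\varphi)$; similarly $((G_{n}),(g_{n}))$ with unitaries $(w_{n})$ for $\mathcal{B}(\psi)$. Since $\mathrm{K}_{0}(\mathrm{Ad}\,v_{n})=\mathrm{id}$ and $(\mathrm{Ad}\,v_{n})\varphi\upharpoonright_{A_{n}}$ factors through $B_{f_{n}}$ with $\mathrm{K}_{0}$ equal to $F_{n}$, I would obtain $\mathrm{K}_{0}(\varphi)\circ e_{n}=s_{f_{n}}\circ F_{n}$ for every $n$, and likewise $\mathrm{K}_{0}(\psi)\circ e_{n}=s_{g_{n}}\circ G_{n}$.

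Both directions then fall out of the inductive-limit property. If $\mathcal{B}(\varphi)=\mathcal{B}(\psi)$, then by Definition~\ref{defeq2} and Proposition~\ref{proeqpre} there is, for each $n$, an $m\geq f_{n},g_{n}$ with $S_{f_{n}m}F_{n}=S_{g_{n}m}G_{n}$; composing with $s_{m}$ and using $s_{m}S_{f_{n}m}=s_{f_{n}}$ yields $s_{f_{n}}F_{n}=s_{g_{n}}G_{n}$, that is, $\mathrm{K}_{0}(\varphi)\circ e_{n}=\mathrm{K}_{0}(\psi)\circ e_{n}$; since the images of the $e_{n}$ exhaust $\mathrm{K}_{0}(A)$, this gives $\mathrm{K}_{0}(\varphi)=\mathrm{K}_{0}(\psi)$. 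Conversely, if $\mathrm{K}_{0}(\varphi)=\mathrm{K}_{0}(\psi)$ then $s_{f_{n}}F_{n}=s_{g_{n}}G_{n}$ for each $n$; evaluating on the finitely many standard generators of $\mathbb{Z}^{k_{n}}$ and using that equal images in a sequential inductive limit become equal at a finite stage, I would find a single $m\geq f_{n},g_{n}$ with $S_{f_{n}m}F_{n}=S_{g_{n}m}G_{n}$, which is precisely Definition~\ref{defeq2}; hence the premorphisms are equivalent, so $\mathcal{B}(\varphi)=\mathcal{B}(\psi)$, and Lemma~\ref{lemeqbh} yields (2).

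The main obstacle is the identity $\mathrm{K}_{0}(\varphi)\circ e_{n}=s_{f_{n}}\circ F_{n}$, where care is needed to match the $\mathrm{K}_{0}$ of the genuinely defined map $\varphi$ (whose restriction to $A_{n}$ need not land in any $B_{m}$) with the combinatorial datum $F_{n}$ attached to $\mathcal{B}(\varphi)$; this is exactly where the conjugating unitaries $v_{n}$ and the homotopy invariance $\mathrm{K}_{0}(\mathrm{Ad}\,v_{n})=\mathrm{id}$ do the work. The only other point requiring attention is the passage, in the converse direction, from ``equal at each generator, at a generator-dependent stage'' to ``equal as matrices at one common stage'', which is routine given finite generation of $\mathbb{Z}^{k_{n}}$.
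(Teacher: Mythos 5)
Your proposal is correct, but it takes a genuinely different route from the paper's. The paper proves $(1)\Rightarrow(2)$ by a direct, self-contained projection-lifting argument: it passes to the unital extensions $\varphi^{+},\psi^{+}$, conjugates $\varphi^{+}(A_{n}^{+})$ and $\psi^{+}(A_{n}^{+})$ into a common $B_{m}^{+}$ by Lemma~\ref{lembr}, uses the hypothesis $\mathrm{K}_{0}(\varphi)=\mathrm{K}_{0}(\psi)$ together with the fact that stably equivalent projections become equivalent at a finite stage (essentially \cite[Lemma~7.2.8]{mu90}) and cancellation in finite-dimensional algebras to produce partial isometries $w_{l}$ implementing $p_{11}^{l}\sim q_{11}^{l}$ for the images of a system of matrix units (including the unit defect $1-1_{A_{n}}$, handled via $[e_{11}^{k+1}]-[1]\in \mathrm{K}_{0}(A)$), assembles these into a unitary $w$ with $F=(\mathrm{Ad}\,{w})G$, and sets $u_{n}=u^{*}wv$; the converse is a short direct computation. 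You instead observe that condition (2) is verbatim condition (2) of Lemma~\ref{lemeqbh}, reducing everything to the single equivalence $\mathrm{K}_{0}(\varphi)=\mathrm{K}_{0}(\psi)$ if and only if $\mathcal{B}(\varphi)=\mathcal{B}(\psi)$, which you establish by realizing $\mathrm{K}_{0}$ of each algebra as the inductive limit of the diagram data and translating both sides, via the identity $\mathrm{K}_{0}(\varphi)\circ e_{n}=s_{f_{n}}\circ F_{n}$, into the combinatorial condition of Definition~\ref{defeq2} (with Proposition~\ref{proeqpre}). The steps check out: the key identity follows from Theorem~\ref{thrfd}, Lemma~\ref{lempr}, and triviality of inner automorphisms (by unitaries in the unitization) on $\mathrm{K}_{0}$; the passage in the converse from generator-wise equality at varying stages to one common stage is routine for sequential colimits of finitely generated groups; and there is no circularity, since Lemma~\ref{lemeqbh} and Proposition~\ref{proeqpre} are proved in Sections 2 and 5 without any K-theory. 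In effect you are showing that the inductive limit functor $\mathbf{BD}\to\mathbf{DG}$ identifies the kernel relations of $\mathcal{B}$ and $\mathrm{K}_{0}$ on hom-sets---precisely the \emph{alternative} route the paper itself sketches at the end of the proofs of Theorem~\ref{thrkbar} and Corollary~\ref{corbddg} but does not carry out for this lemma. What the paper's argument buys is an explicit construction of the unitaries $u_{n}$ from K-theoretic data, independent of the Bratteli-diagram machinery (so that Lemma~\ref{lemeqk} can stand on its own and feed into Theorem~\ref{thrkbar}); what yours buys is brevity and modularity, at the cost of resting on Lemma~\ref{lemeqbh} and on the standard (invoked rather than reproved) computation of $\mathrm{K}_{0}$ for AF algebras from their diagrams.
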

\begin{proof}
Let $\varphi^{+} , \psi^{+} :A^{+} \to B^{+}$ denote the unital extensions of
$\varphi , \psi : A\to B$. Note that
$A^{+}=\overline{\bigcup_{n=1}^{\infty}A_{n}^{+}}$ and
$B^{+}=\overline{\bigcup_{n=1}^{\infty}B_{n}^{+}}$.
The proof  is similar to the finite dimensional case \cite[Theorem~7.2.6]{mu90}.

$(1)\Rightarrow (2)$:
The proof really should be thought of as three
separate steps---first reducing to the case that the domain
is a single finite dimensional algebra, and then to the case
that the codomain is a single finite dimensional algebra (using
for the second step that $\mathrm{K}_{0}$ of the limit is the limit of the
$\mathrm{K}_{0}$'s). The third step, that both algebras are finite dimensional,
 follows immediately from an argument due to Bratteli.
The details are as follows.

Fix $n\geq 1$. By Lemma~\ref{lembr}, there are unitaries
$u,v\in B^{+}$ and a positive integer $m$ such that
$u\varphi^{+}(A_{n}^{+})u^{*}\subseteq B_{m}^{+}$ and
$v\psi^{+}(A_{n}^{+})v^{*}\subseteq B_{m}^{+}$.
Define $*$-homomorphisms $F,G : A_{n}^{+}\to B_{m}^{+}$ with $F(a)=u\varphi^{+}(a)u^{*}$ and
$G(a)=v\psi^{+}(a)v^{*}$, $a\in A_{n}^{+}$.
Let $\{e_{ij}^{l} : 1\leq l\leq k, \ 1\leq i,j\leq n_{l}\}$ be a set of matrix units for $A_{n}$
and set $e_{11}^{k+1}=1-1_{A_{n}}$ (where 1 is the unit element of $ A_{n}^{+}$) and $n_{k+1}=1$. Then
$\{e_{ij}^{l} : 1\leq l\leq k+1, \ 1\leq i,j\leq n_{l}\}$ is a set of matrix units for $A_{n}^{+}$.

Set
$p_{ij}^{l}=F(e_{ij}^{l})$ and
$q_{ij}^{l}=G(e_{ij}^{l})$, for the above values of $i,j,l$.
Fix $1\leq l\leq k+1$.
Let $[p_{11}^{l}]$ be the equivalence class of the projection $p_{11}^{l}$ in $V(B^{+})$.
Since $e_{11}^{l}\in A$, for $1\leq l\leq k$, the formal difference $x=[e_{11}^{l}]-[0]$ is in $\mathrm{K}_{0}(A)$.
We have
$[p_{11}^{l}]-[0]=[\varphi(e_{11}^{l})]-[0]=\mathrm{K}_{0}(\varphi)(x)=
\mathrm{K}_{0}(\psi)(x)=[q_{11}^{l}]-[0]$. Thus there is $r\in \mathbb{M}_{\infty}(B^{+})$ such that
$[p_{11}^{l}]+[r]=[q_{11}^{l}]+[r]$ in $V(B^{+})$.
For $l=k+1$ we have $[e_{11}^{k+1}]-[1]\in \mathrm{K}_{0}(A)$ and similarly
there is $r\in \mathbb{M}_{\infty}(B^{+})$ such that
$[p_{11}^{k+1}]+[r]=[q_{11}^{k+1}]+[r]$ in $V(B^{+})$.
Since $B^{+}=\overline{\bigcup_{n=1}^{\infty}B_{n}^{+}}$, there is $m'\geq m$ such that
$r$ is equivalent to some projection in $\mathbb{M}_{\infty}(B_{m'}^{+})$, and we may
assume that $r\in \mathbb{M}_{\infty}(B_{m'}^{+})$.
Hence the projections
$\mathrm{diag}(p_{11}^{l},r),\mathrm{diag}(q_{11}^{l},r)\in \mathbb{M}_{\infty}(B_{m'}^{+})$
are equivalent in $B^{+}$. By \cite[Lemma~7.2.8]{mu90} (or, rather, by its proof),
there is $m_{l}\geq m'$ such that
$\mathrm{diag}(p_{11}^{l},r)$ and $\mathrm{diag}(q_{11}^{l},r)$ are equivalent in
$B_{m_{l}}^{+}$. Since $B_{m_{l}}^{+}$ is finite dimensional,
$p_{11}^{l}$ is  equivalent to $q_{11}^{l}$, and so there is
a partial isometry  $w_{l}\in B_{m_{l}}^{+}$ such that $p_{11}^{l}=w_{l}w_{l}^{*}$
and $q_{11}^{l}=w_{l}^{*}w_{l}$. Now set
\[
\sum_{l=1}^{k+1}\sum_{i=1}^{n_{l}}p_{i1}^{l}w_{l}q_{1i}^{l}=w.
\]
An easy  calculation shows that $w$ is unitary in $B^{+}$ and $wq_{ij}^{l}=p_{ij}^{l}w$, for the above values of $i,j,l$.
Thus $F=(\mathrm{Ad}\,{w})G$. Set $u_{n}=u^{*}wv$. Then
$\varphi=(\mathrm{Ad}\,{u_{n}})\psi$ on $A_{n}$. We have found a sequence of
unitaries
$(u_{n})_{n=1}^{\infty}$ in $B^{+}$ with the property stated in part (2), but this is
equivalent to the existence of a sequence of unitaries in $B^{\sim}$ with the same property,
by the remark following Lemma~\ref{lemeqbh}.

$(2)\Rightarrow (1)$: Suppose that (2) holds. As stated above, we may assume that
the unitaries
$(u_{n})_{n=1}^{\infty}$ are in $B^{+}$. Let $p$ be in
$\mathbb{M}_{\infty}(A^{+})$. Then there is a positive integer $n$ and a projection
$q\in \mathbb{M}_{\infty}(A_{n}^{+})$ such that $[p]=[q]$.
Since $\varphi=(\mathrm{Ad}\,{u_{n}})\psi$ on $A_{n}$, and we have
$\varphi^{+}=(\mathrm{Ad}\,{u_{n}})\psi^{+}$ on $A_{n}^{+}$,  hence
$\varphi_{*}([p])=\varphi_{*}([q])=\psi_{*}([q])=\psi_{*}([p])$. Therefore
$\mathrm{K}_{0}(\varphi)=\mathrm{K}_{0}(\psi)$.
\end{proof}
\begin{theorem}\label{thrkbar}
There is a unique functor
$\overline{\mathrm{K}}_{0}:\overline{\mathbf{AF}^{\mathrm{out}}} \to \mathbf{DG}$
such that $\mathrm{K}_{0}=\overline{\mathrm{K}}_{0}\,\overline{\mathcal{F}}$:
\[
\xymatrix{
\mathbf{AF}\ar[r]^{\overline{\mathcal{F}}}
\ar[rd]_{\mathrm{K}_{0}} & \overline{\mathbf{AF}^{\mathrm{out}}}
\ar[d]^{\overline{\mathrm{K}}_{0}}\\
& \mathbf{DG}\ .
}
\]
It is a strong classification functor, essentially surjective on objects, and a full functor.
 Moreover,  for each pair of morphisms
$\varphi,\psi :\mathcal{A}_{1}\to \mathcal{A}_{2}$ in $\mathbf{AF} $, we have
$\mathrm{K}_{0}(\varphi)=\mathrm{K}_{0}(\psi)$ if and only
 if $\overline{\mathcal{F}}(\varphi)=\overline{\mathcal{F}}(\psi)$; in  other words,
 $\overline{\mathrm{K}}_{0}$ is a faithful functor.
\end{theorem}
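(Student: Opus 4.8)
The plan is to mirror the proof of Theorem~\ref{throutbar} almost verbatim, with Lemma~\ref{lemeqk} playing the role that Lemma~\ref{lemeqbh} played there, and Theorem~\ref{threehs} playing the role that Theorems~\ref{thrst} and \ref{thrful} played for $\mathcal{B}$. As in that argument, the crux is the final ``if and only if'' statement: once it is established that for morphisms $\varphi,\psi:\mathcal{A}_{1}\to\mathcal{A}_{2}$ in $\mathbf{AF}$ one has $\mathrm{K}_{0}(\varphi)=\mathrm{K}_{0}(\psi)$ precisely when $\overline{\mathcal{F}}(\varphi)=\overline{\mathcal{F}}(\psi)$, the functor $\overline{\mathrm{K}}_{0}$ and all of its listed properties follow formally.

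First I would prove this equivalence, and the key observation is cheap: condition~(2) of Lemma~\ref{lemeqk}---the existence of a sequence of unitaries $(u_{n})_{n=1}^{\infty}$ in $B^{\sim}$ with $\varphi=(\mathrm{Ad}\,u_{n})\psi$ on $A_{n}$ for each $n$---is word-for-word condition~(2) of Lemma~\ref{lemeqbh}. Hence the two lemmas cut out exactly the same relation on $\mathrm{Hom}(\mathcal{A}_{1},\mathcal{A}_{2})$, so that $\mathrm{K}_{0}(\varphi)=\mathrm{K}_{0}(\psi)$ holds if and only if $\mathcal{B}(\varphi)=\mathcal{B}(\psi)$. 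By Theorem~\ref{throutbar} the latter holds if and only if $\overline{\mathcal{F}}(\varphi)=\overline{\mathcal{F}}(\psi)$, and chaining these equivalences yields the desired statement. The only point requiring a little care is that Lemma~\ref{lemeqk} is phrased with the unitization $B^{\sim}$ while the inner automorphisms defining $\overline{\mathcal{F}}$ use $B^{+}$; the remark following Lemma~\ref{lemeqbh} lets one pass freely between $B^{\sim}$ and $B^{+}$, so this causes no difficulty.

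With the equivalence in hand I would define $\overline{\mathrm{K}}_{0}$ on objects by $\overline{\mathrm{K}}_{0}(\mathcal{A})=\mathrm{K}_{0}(\mathcal{A})$ and on morphisms by $\overline{\mathrm{K}}_{0}(\overline{\mathcal{F}}(\varphi))=\mathrm{K}_{0}(\varphi)$; well-definedness and faithfulness are then immediate from the equivalence, and the factorization $\mathrm{K}_{0}=\overline{\mathrm{K}}_{0}\,\overline{\mathcal{F}}$ is built into the definition. That $\overline{\mathrm{K}}_{0}$ respects composition and identities, together with its uniqueness, follows from $\overline{\mathcal{F}}$ being surjective on objects and full (hence surjective on morphisms), exactly as for $\overline{\mathcal{B}}$. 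Finally, since $\mathrm{K}_{0}=\overline{\mathrm{K}}_{0}\,\overline{\mathcal{F}}$ with $\overline{\mathcal{F}}$ a strong classification functor that is full and surjective on objects, the remaining assertions---that $\overline{\mathrm{K}}_{0}$ is a strong classification functor, full, and essentially surjective on objects---transfer directly from the corresponding properties of $\mathrm{K}_{0}$ recorded in Theorem~\ref{threehs}. I do not expect a genuine obstacle here: all of the analytic content is already packaged in Lemma~\ref{lemeqk}, and the theorem is a formal consequence of that lemma together with Theorem~\ref{throutbar} and Theorem~\ref{threehs}.
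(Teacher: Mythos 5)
Your proposal is correct and follows essentially the same route as the paper: the paper also defines $\overline{\mathrm{K}}_{0}$ by $\overline{\mathrm{K}}_{0}(\overline{\mathcal{F}}(\varphi))=\mathrm{K}_{0}(\varphi)$, obtains well-definedness and faithfulness from the equivalence $\mathrm{K}_{0}(\varphi)=\mathrm{K}_{0}(\psi)\Leftrightarrow\overline{\mathcal{F}}(\varphi)=\overline{\mathcal{F}}(\psi)$ via Lemma~\ref{lemeqk}, and transfers the remaining properties from Theorem~\ref{threehs} using the surjectivity and fullness of $\overline{\mathcal{F}}$ exactly as in Theorem~\ref{throutbar}. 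Your explicit chaining through $\mathcal{B}(\varphi)=\mathcal{B}(\psi)$ (noting that condition~(2) of Lemma~\ref{lemeqk} coincides with condition~(2) of Lemma~\ref{lemeqbh}, with the $B^{\sim}$ versus $B^{+}$ point handled by the remark) is merely a more explicit spelling-out of the same identification the paper makes.
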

\begin{proof}
Define $\overline{\mathrm{K}}_{0}:\overline{\mathbf{AF}^{\mathrm{out}}} \to \mathbf{DG}$ as follows.
For
$\mathcal{A}\in \overline{\mathbf{AF}^{\mathrm{out}}}$ set
$\overline{\mathrm{K}}_{0}(\mathcal{A})=\mathrm{K}_{0}(\mathcal{A})$.
Let  $\varphi :\mathcal{A}_{1}\to \mathcal{A}_{2}$ be a morphism in
$\mathbf{AF}$. Set
$\overline{\mathrm{K}}_{0}(\overline{\mathcal{F}}(\varphi))=\mathrm{K}_{0}(\varphi)$.
For each pair of morphisms
$\varphi,\psi :\mathcal{A}_{1}\to \mathcal{A}_{2}$ in $\mathbf{AF}$ we have
$\mathrm{K}_{0}(\varphi)=\mathrm{K}_{0}(\psi)$ if and only
 if $\overline{\mathcal{F}}(\varphi)=\overline{\mathcal{F}}(\psi)$, by
 Lemma~\ref{lemeqk}.
This proves that $\overline{\mathrm{K}}_{0}$ is well~defined. The other properties of
 $\overline{\mathrm{K}}_{0}$ are easy to prove, using Theorem~\ref{threehs} and the same properties of
$\overline{\mathcal{F}}$ as  used in
the proof of Theorem~\ref{throutbar}.

Alternatively, the whole theorem follows immediately from
Theorem~\ref{throutbar} together with the equivalence of the categories
$\mathbf{BD}$ and $\mathbf{DG}$,
an elementary proof of which is given below as an alternative proof of
Corollary~\ref{corbddg}---one uses that the functor
$\mathbf{BD}\to \mathbf{DG}$ in question is just the inductive limit
functor, which acts as a natural isomorphism between the functors
$\mathcal{B}$ and $\mathrm{K}_{0}$,
and therefore also between $\overline{\mathcal{B}}$
and $\overline{\mathrm{K}}_{0}$.
\end{proof}

Next let us show that the categories
$\overline{\mathbf{AF}^{\mathrm{out}}}$ and $\mathbf{DG}$ are equivalent, and
this equivalence, given by
the functor
$\overline{\mathrm{K}}_{0}:\overline{\mathbf{AF}^{\mathrm{out}}} \to \mathbf{DG}$,
 is compatible with the classification of AF~algebras via the functors
$\mathcal{B}: \mathbf{AF} \to \mathbf{BD}$ and
$\overline{\mathcal{F}}:\mathbf{AF}\to  \overline{\mathbf{AF}^{\mathrm{out}}}$, i.e.,
the related diagrams commute.
\begin{theorem}\label{thrfung0}
The functor $\overline{\mathrm{K}}_{0}:\overline{\mathbf{AF}^{\mathrm{out}}} \to \mathbf{DG}$
is an equivalence of categories. More precisely,
there is a  unique (up to natural isomorphism) functor
$\mathcal{G}_{0}:\mathbf{DG}\to \overline{\mathbf{AF}^{\mathrm{out}}}$
such that
$\overline{\mathrm{K}}_{0}\,\mathcal{G}_{0}\cong\mathrm{id}_{\mathbf{DG}}$ and
$\mathcal{G}_{0}\overline{\mathrm{K}}_{0}\cong \mathrm{id}_{\overline{\mathbf{AF}^{\mathrm{out}}}}$.
The functor $\mathcal{G}_{0}$ is
full, faithful, essentially surjective, and a strong classification  functor.
Moreover,
we have
$\overline{\mathrm{K}}_{0}\mathcal{G}_{0}{\mathrm{K}}_{0}
 ={\mathrm{K}}_{0}$:

\[
\xymatrix{
\mathbf{AF}\ar[r]^{{\mathrm{K}}_{0}}
\ar[d]_{{\mathrm{K}}_{0}} &  \mathbf{DG}\\
\mathbf{DG} \ar[r]_{\mathcal{G}_{0}}
&  \overline{\mathbf{AF}^{\mathrm{out}}}
\ar[u]_{\overline{\mathrm{K}}_{0}}\ .
}
\]

\end{theorem}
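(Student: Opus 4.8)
The plan is to adapt the argument for Theorem~\ref{thrfung}, using essential surjectivity in place of surjectivity on objects. By Theorem~\ref{thrkbar}, the functor $\overline{\mathrm{K}}_{0}:\overline{\mathbf{AF}^{\mathrm{out}}} \to \mathbf{DG}$ is full, faithful, and essentially surjective on objects. A full, faithful, essentially surjective functor is an equivalence of categories (the criterion of \cite[Theorem~IV.4.1]{ma98}; note that, unlike $\overline{\mathcal{B}}$ in Theorem~\ref{thrfung}, the functor $\overline{\mathrm{K}}_{0}$ need not be surjective on objects, so Lemma~\ref{lemcat} does not apply verbatim). Hence there is a functor $\mathcal{G}_{0}:\mathbf{DG}\to \overline{\mathbf{AF}^{\mathrm{out}}}$, unique up to natural isomorphism, with $\overline{\mathrm{K}}_{0}\,\mathcal{G}_{0}\cong\mathrm{id}_{\mathbf{DG}}$ and $\mathcal{G}_{0}\overline{\mathrm{K}}_{0}\cong \mathrm{id}_{\overline{\mathbf{AF}^{\mathrm{out}}}}$; the construction chooses, for each $D\in\mathbf{DG}$, an object $\mathcal{G}_{0}(D)$ and an isomorphism $\epsilon_{D}:\overline{\mathrm{K}}_{0}(\mathcal{G}_{0}(D))\to D$, invoking the global axiom of choice exactly as in Theorem~\ref{thrfung}. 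Being a quasi-inverse of an equivalence, $\mathcal{G}_{0}$ is itself an equivalence, hence full, faithful, and essentially surjective, and then a strong classification functor by the first assertion of Lemma~\ref{lemcat}.

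The delicate point is the final assertion $\overline{\mathrm{K}}_{0}\mathcal{G}_{0}\mathrm{K}_{0}=\mathrm{K}_{0}$, which is a strict equality of functors rather than a mere natural isomorphism (the latter being all that $\overline{\mathrm{K}}_{0}\,\mathcal{G}_{0}\cong\mathrm{id}_{\mathbf{DG}}$ provides). I would secure it by normalizing the choice above. The objects of $\overline{\mathbf{AF}^{\mathrm{out}}}$ coincide with those of $\mathbf{AF}$, and $\overline{\mathrm{K}}_{0}$ agrees with $\mathrm{K}_{0}$ on objects; hence the object-image of $\overline{\mathrm{K}}_{0}$ is exactly the object-image of $\mathrm{K}_{0}$. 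For each $D$ in this image I would take $\mathcal{G}_{0}(D)$ to be a genuine preimage, so that $\overline{\mathrm{K}}_{0}(\mathcal{G}_{0}(D))=D$, and set $\epsilon_{D}=\mathrm{id}_{D}$; for the remaining $D$ (in the essential image but not the image) I make an arbitrary admissible choice. Then $\overline{\mathrm{K}}_{0}\mathcal{G}_{0}$ is the identity on all objects in the image of $\mathrm{K}_{0}$; moreover, for a morphism $f:D\to D'$ between two such objects the defining relation $\overline{\mathrm{K}}_{0}(\mathcal{G}_{0}(f))=\epsilon_{D'}^{-1}\,f\,\epsilon_{D}$ of the quasi-inverse collapses, since $\epsilon_{D}$ and $\epsilon_{D'}$ are identities, to $\overline{\mathrm{K}}_{0}(\mathcal{G}_{0}(f))=f$. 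Taking $f=\mathrm{K}_{0}(\varphi)$ for an arbitrary morphism $\varphi$ of $\mathbf{AF}$ (whose domain and codomain lie in the image of $\mathrm{K}_{0}$) gives $\overline{\mathrm{K}}_{0}\mathcal{G}_{0}\mathrm{K}_{0}(\varphi)=\mathrm{K}_{0}(\varphi)$, and likewise on objects, which is the claimed equality. The obstacle here is conceptual rather than computational: one must notice that essential surjectivity can be upgraded to a strict identity \emph{on the image} of $\mathrm{K}_{0}$, precisely because that image is realized by honest preimages.

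Finally, I would note an alternative route that bypasses the ad hoc normalization: deduce the theorem from the equivalence $\mathbf{BD}\simeq\mathbf{DG}$ induced by the inductive-limit functor (Corollary~\ref{corbddg}), which intertwines $\mathcal{B}$ with $\mathrm{K}_{0}$, together with Theorem~\ref{thrfung}. Composing the functor $\mathcal{G}:\mathbf{BD}\to\overline{\mathbf{AF}^{\mathrm{out}}}$ of Theorem~\ref{thrfung} with a quasi-inverse of $\mathbf{BD}\to\mathbf{DG}$ yields $\mathcal{G}_{0}$, and the strict equality $\overline{\mathrm{K}}_{0}\mathcal{G}_{0}\mathrm{K}_{0}=\mathrm{K}_{0}$ descends from the strict relation $\overline{\mathcal{B}}\,\mathcal{G}=\mathrm{id}_{\mathbf{BD}}$ of Theorem~\ref{thrfung} via the intertwining.
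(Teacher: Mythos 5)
Your proposal is correct and follows essentially the same route as the paper: the equivalence comes from Theorem~\ref{thrkbar} together with \cite[Theorem~IV.4.1]{ma98} (with Lemma~\ref{lemcat} supplying the strong classification property), and the strict equality $\overline{\mathrm{K}}_{0}\mathcal{G}_{0}\mathrm{K}_{0}=\mathrm{K}_{0}$ is obtained, exactly as in the paper, by normalizing the quasi-inverse so that objects in the image of $\overline{\mathrm{K}}_{0}$ are assigned honest preimages with identity comparison isomorphisms. Your closing alternative via the inductive-limit equivalence $\mathbf{BD}\simeq\mathbf{DG}$ and Theorem~\ref{thrfung} is likewise the paper's own alternative argument.
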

\begin{proof}
The first part of the statement follows from
 Theorem~\ref{thrkbar} and \cite[Theorem~IV.4.1]{ma98}.
The functorial properties of
$\mathcal{G}_{0}$ follow from
\cite[Theorem~IV.4.1]{ma98} and
 Lemma~\ref{lemcat}.
 For the proof  of the last part of the statement
 note that we can construct $\mathcal{G}_{0}$
 to have the property that
 $\overline{\mathrm{K}}_{0}(\mathcal{G}_{0}(\overline{\mathrm{K}}_{0}(\mathcal{A})))
 =\overline{\mathrm{K}}_{0}(\mathcal{A})$,
 for each $\mathcal{A}\in \overline{\mathbf{AF}^{\mathrm{out}}}$,
 and that
 $\overline{\mathrm{K}}_{0}(\mathcal{G}_{0}(\overline{\mathrm{K}}_{0}(\psi)))
 =\overline{\mathrm{K}}_{0}(\psi)$,
for each morphism $\psi :\mathcal{A}_{1}\to \mathcal{A}_{2}$
in $\overline{\mathbf{AF}^{\mathrm{out}}}$
(see the proof of \cite[Theorem~IV.4.1]{ma98}).
Therefore, for each morphism
 $\varphi :\mathcal{A}_{1}\to \mathcal{A}_{2}$ in
 $\mathbf{AF}$,
 setting $\psi=\overline{\mathcal{F}}(\varphi)$
 we get
$\overline{\mathrm{K}}_{0}(\mathcal{G}_{0}({\mathrm{K}}_{0}(\varphi)))
 ={\mathrm{K}}_{0}(\varphi)$.

Alternatively, use the remarks
at the end of the proof of Theorem~\ref{thrkbar}.
\end{proof}
The category $\overline{\mathbf{AF}^{\mathrm{out}}}$
can be used to relate the categories
$\mathbf{BD}$ and $\mathbf{DG}$. Let us collect all the functors
in one (commutative) diagram:

\[
\xymatrix{
& & \mathbf{BD}
\ar@/^.8pc/[dd]^{\resizebox{.65em}{.65em}{$\mathcal{G}$}}\\
& & \\
\mathbf{AF}\ar[rr]^{\resizebox{.85em}{.85em}{$\overline{\mathcal{F}}$}}
\ar[rrdd]_{\resizebox{1.3em}{.67em}{$\mathrm{K}_{0}$}}
\ar[rruu]^{\resizebox{.66em}{.66em}{$\mathcal{B}$}} & &
\overline{\mathbf{AF}^{\mathrm{out}}}
\ar@/_.8pc/[dd]_{\resizebox{1.29em}{.84em}{$\overline{\mathrm{K}}_{0}$}}
\ar@/^.8pc/[uu]^{\resizebox{.65em}{.85em}{$\overline{\mathcal{B}}$}}\\
& &\\
& & \mathbf{DG}
\ar@/_.8pc/[uu]_{\resizebox{1em}{.65em}{$\mathcal{G}_{0}$}}\ .
}
\]

\begin{corollary}\label{corbddg}
There is a full, faithful, essentially surjective, and  (hence) strong classification functor
from $\mathbf{BD}$ to
$\mathbf{DG}$. Also there is such a functor from
$\mathbf{DG}$ to $\mathbf{BD}$.
\end{corollary}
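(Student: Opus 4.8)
The plan is to obtain both desired functors simply by composing the equivalences and functors already established, and then to observe that each of the four relevant properties---fullness, faithfulness, essential surjectivity, and being a strong classification functor---is preserved under composition. No genuinely new argument is needed, since the hard work was done in setting up the equivalences of $\mathbf{BD}$, $\overline{\mathbf{AF}^{\mathrm{out}}}$, and $\mathbf{DG}$ with one another.

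First I would define a functor $\mathbf{BD}\to \mathbf{DG}$ as the composite $\overline{\mathrm{K}}_{0}\,\mathcal{G}$, where $\mathcal{G}:\mathbf{BD}\to \overline{\mathbf{AF}^{\mathrm{out}}}$ is the functor of Theorem~\ref{thrfung} and $\overline{\mathrm{K}}_{0}:\overline{\mathbf{AF}^{\mathrm{out}}}\to \mathbf{DG}$ is the functor of Theorem~\ref{thrkbar}. By Theorem~\ref{thrfung}, $\mathcal{G}$ is full, faithful, and essentially surjective; by Theorem~\ref{thrkbar}, $\overline{\mathrm{K}}_{0}$ is likewise full, faithful, and essentially surjective. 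Since the composite of two full functors is full, the composite of two faithful functors is faithful, and the composite of two essentially surjective functors is essentially surjective, the functor $\overline{\mathrm{K}}_{0}\,\mathcal{G}$ enjoys all three properties. Being full and faithful, it is a strong classification functor, as noted at the beginning of the proof of Lemma~\ref{lemcat}.

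For the reverse direction I would take the functor $\mathbf{DG}\to \mathbf{BD}$ to be $\overline{\mathcal{B}}\,\mathcal{G}_{0}$, where $\mathcal{G}_{0}:\mathbf{DG}\to \overline{\mathbf{AF}^{\mathrm{out}}}$ is the functor of Theorem~\ref{thrfung0} and $\overline{\mathcal{B}}:\overline{\mathbf{AF}^{\mathrm{out}}}\to \mathbf{BD}$ is the functor of Theorem~\ref{throutbar}. Exactly the same bookkeeping applies: $\mathcal{G}_{0}$ is full, faithful, and essentially surjective by Theorem~\ref{thrfung0}, while $\overline{\mathcal{B}}$ is full, faithful, and surjective on objects (hence essentially surjective) by Theorem~\ref{throutbar}, so the composite is full, faithful, essentially surjective, and (hence) a strong classification functor.

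The only point requiring attention is the stability of the four properties under composition, which is routine, so there is no real obstacle. I would also remark on the more self-contained route advertised in the alternative proof, which avoids $\overline{\mathbf{AF}^{\mathrm{out}}}$ entirely: the inductive-limit construction sends a Bratteli diagram $(V,E)$ to $\varinjlim(C^{*}(V_{n}),h(E_{n}))$ and thence, via $\mathrm{K}_{0}$, to its dimension group, yielding a functor $\mathbf{BD}\to \mathbf{DG}$ directly; one then checks fullness and faithfulness by a one-sided intertwining argument as in Theorem~\ref{thrful}, and essential surjectivity from the Effros--Handelman--Shen theorem together with Elliott's characterization of the $\mathrm{K}_{0}$-groups of AF~algebras recorded in Theorem~\ref{threehs}.
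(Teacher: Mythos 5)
Your proposal is correct and follows essentially the same route as the paper: the paper's proof likewise takes the composites $\overline{\mathrm{K}}_{0}\,\mathcal{G}$ and $\overline{\mathcal{B}}\,\mathcal{G}_{0}$ and appeals to Theorems~\ref{thrfung}, \ref{thrkbar}, \ref{throutbar}, and \ref{thrfung0}, with the closure of fullness, faithfulness, and essential surjectivity under composition, and it also records the same alternative via the inductive limit functor and the Effros--Handelman--Shen theorem. The only cosmetic difference is that the paper's alternative interprets a Bratteli diagram directly as an inductive sequence of ordered groups $\mathbb{Z}^{k_{n}}$ rather than passing through the AF~algebra and $\mathrm{K}_{0}$, but these give the same functor.
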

\begin{proof}
Consider the functors
$\mathcal{G}:\mathbf{BD}\to \overline{\mathbf{AF}^{\mathrm{out}}}$ and
$\overline{\mathrm{K}}_{0}:\overline{\mathbf{AF}^{\mathrm{out}}} \to \mathbf{DG}$. Both of these
are strong classification functors and also full, faithful and essentially surjective,
by Theorems~\ref{thrfung} and \ref{thrkbar}, and so also is
$\overline{\mathrm{K}}_{0}\,\mathcal{G}:\mathbf{BD}\to \mathbf{DG}$.
Similarly, the functor $\overline{\mathcal{B}}\,\mathcal{G}_{0}:\mathbf{DG}\to \mathbf{BD}$
has the above properties, by Theorems~\ref{throutbar} and \ref{thrfung0}.

Alternatively, and in a much more elementary way, the obvious inductive
limit functor $\mathbf{BD} \to \mathbf{DG}$,
obtained by interpreting a Bratteli diagram as a sequence of finite
ordered group direct sums of copies of $\mathbb{Z}$,
as in \cite[Section~2]{el10}, is full, faithful, and essentially surjective by
\cite[Theorem~2.2]{ehs80}, and so by \cite[Theorem~IV.4.1]{ma98}
an equivalence of categories.
\end{proof}
\begin{corollary}
The categories $\mathbf{BD}$,  $\overline{\mathbf{AF}^{\mathrm{out}}}$, and
$\mathbf{DG}$ are equivalent.
\end{corollary}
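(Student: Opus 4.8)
The plan is to deduce this statement immediately from the two equivalences already established, namely Theorems~\ref{thrfung} and~\ref{thrfung0}, together with the elementary observation that equivalence of categories is an equivalence relation among categories. First I would recall what those theorems give: Theorem~\ref{thrfung} exhibits $\overline{\mathcal{B}}:\overline{\mathbf{AF}^{\mathrm{out}}}\to \mathbf{BD}$ as an equivalence of categories, so that $\overline{\mathbf{AF}^{\mathrm{out}}}$ and $\mathbf{BD}$ are equivalent; and Theorem~\ref{thrfung0} exhibits $\overline{\mathrm{K}}_{0}:\overline{\mathbf{AF}^{\mathrm{out}}}\to \mathbf{DG}$ as an equivalence, so that $\overline{\mathbf{AF}^{\mathrm{out}}}$ and $\mathbf{DG}$ are equivalent.

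Next I would invoke the standard categorical facts that the identity functor is an equivalence, that the functor witnessing an equivalence is itself an equivalence (symmetry), and that a composite of two equivalences is again an equivalence (transitivity). The transitivity follows at once from the characterization of an equivalence as a full, faithful, and essentially surjective functor \cite[Theorem~IV.4.1]{ma98}, since each of these three properties is preserved under composition of functors. Consequently the relation ``being equivalent'' is reflexive, symmetric, and transitive on categories, and the chain $\mathbf{BD}\simeq \overline{\mathbf{AF}^{\mathrm{out}}}\simeq \mathbf{DG}$ yields that $\mathbf{BD}$ and $\mathbf{DG}$ are equivalent as well, whence all three categories are mutually equivalent. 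Alternatively, a direct equivalence between $\mathbf{BD}$ and $\mathbf{DG}$ has already been recorded in Corollary~\ref{corbddg} (for instance via the composite $\overline{\mathrm{K}}_{0}\,\mathcal{G}$, or via the inductive-limit functor), so one could instead simply combine Corollary~\ref{corbddg} with Theorem~\ref{thrfung}.

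There is no genuine obstacle here, as all the substantive work was carried out in the preceding theorems; what remains is only the bookkeeping remark that equivalence of categories is transitive and symmetric. The one point deserving a word of care is that the argument should be phrased at the level of the \emph{existence} of equivalences, rather than by pasting together specific chosen quasi-inverse functors, so as to avoid any superfluous appeal to choice in forming inverses. This is automatic once one argues via the full–faithful–essentially-surjective criterion of \cite[Theorem~IV.4.1]{ma98}, under which the composition of the given equivalences is manifestly again full, faithful, and essentially surjective.
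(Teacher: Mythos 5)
Your proposal is correct and matches the paper's (implicit) argument: the paper states this corollary without separate proof precisely because it follows immediately from Theorem~\ref{thrfung}, Theorem~\ref{thrfung0}, and Corollary~\ref{corbddg}, exactly as you argue. Your remark that transitivity is cleanest via the full--faithful--essentially-surjective criterion of \cite[Theorem~IV.4.1]{ma98} is also the route the paper itself takes in the proof of Corollary~\ref{corbddg}.
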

Finally let us show that the three strong classification functors $\mathcal{B}:\mathbf{AF}\to \mathbf{BD}$,
$\overline{\mathcal{F}} :\mathbf{AF}\to  \overline{\mathbf{AF}^{\mathrm{out}}}$, and
$\mathrm{K}_{0}: \mathbf{AF}\to \mathbf{DG}$ for classification
of AF~algebras are essentially the same.
\begin{theorem}\label{thrfuns}
Consider the three strong classification functors
$\mathcal{B}:\mathbf{AF}\to \mathbf{BD}$,
$\overline{\mathcal{F}} :\mathbf{AF}\to  \overline{\mathbf{AF}^{\mathrm{out}}}$, and
$\mathrm{K}_{0}: \mathbf{AF}\to \mathbf{DG}$ for AF~algebras. For each objects $\mathcal{A}_{1},\mathcal{A}_{2}$
in $\mathbf{AF}$, the following are equivalent:
\begin{itemize}
\item[(1)]
$\mathcal{A}_{1}\cong\mathcal{A}_{2}$
in $\mathbf{AF}$;
\item[(2)]
$\mathcal{B}(\mathcal{A}_{1})\cong\mathcal{B}(\mathcal{A}_{2})$
in $\mathbf{BD}$;
\item[(3)]
$\overline{\mathcal{F}}(\mathcal{A}_{1})\cong
\overline{\mathcal{F}}(\mathcal{A}_{2})$
in $\overline{\mathbf{AF}^{\mathrm{out}}}$;
\item[(4)]
$\mathrm{K}_{0}(\mathcal{A}_{1})\cong \mathrm{K}_{0}(\mathcal{A}_{2})$
in $\mathbf{DG}$.
\end{itemize}
For each pair of morphisms
$\varphi ,\psi:\mathcal{A}_{1}\to \mathcal{A}_{2}$ in $\mathbf{AF}$,
the following statements are equivalent:
\begin{itemize}
\item[(1)]
$\mathcal{B}(\varphi)=\mathcal{B}(\psi)$;
\item[(2)]
$\overline{\mathcal{F}}(\varphi)=\overline{\mathcal{F}}(\psi)$;
\item[(3)]
$\mathrm{K}_{0}(\varphi)=\mathrm{K}_{0}(\psi)$;
\item[(4)]
there is a sequence of unitaries
$(u_{n})_{n=1}^{\infty}$ in $B^{\sim}$ such that
$\varphi=(\mathrm{Ad}\,{u_{n}})\psi$ on $A_{n}$, $n\geq 1$;
\item[(5)]
there is a sequence of unitaries
$(u_{n})_{n=1}^{\infty}$ in $B^{\sim}$ such that
$\varphi(a)=\lim\limits_{n\to \infty}(\mathrm{Ad}\,{u_{n}})\psi(a)$, $a\in A$.
\end{itemize}
\end{theorem}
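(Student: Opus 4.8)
The plan is to recognize that this theorem is a consolidation of results already established, so the proof reduces to assembling the appropriate equivalences in the right order and citing them.

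For the first list, concerning objects, the organizing principle is the general fact that for any classification functor $F:\mathcal{C}\to\mathcal{D}$ one has $a\cong b$ in $\mathcal{C}$ if and only if $F(a)\cong F(b)$ in $\mathcal{D}$: the forward implication is automatic because $F$ is a functor, and the reverse is exactly the defining classification property. I would invoke this three times. Since $\mathcal{B}$ is a strong classification functor (Theorem~\ref{thrst}), this gives $(1)\Leftrightarrow(2)$, which is also recorded as Corollary~\ref{coreq}. Since $\overline{\mathcal{F}}$ is a strong classification functor (by \cite[Theorem~3]{el10}, as recalled in Section~5), this gives $(1)\Leftrightarrow(3)$. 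And since $\mathrm{K}_{0}$ is a strong classification functor (Theorem~\ref{threehs}, i.e.\ Elliott's theorem), this gives $(1)\Leftrightarrow(4)$.

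For the second list, concerning morphisms, I would chain together the morphism-level statements already proved, taking care that the conclusion here is \emph{equality} of images (not merely isomorphism). The equivalences $(1)\Leftrightarrow(4)\Leftrightarrow(5)$ are precisely the content of Lemma~\ref{lemeqbh}, under the relabeling in which the theorem's $(1)$, $(4)$, $(5)$ correspond respectively to the lemma's $(1)$, $(2)$, $(3)$: that $\mathcal{B}(\varphi)=\mathcal{B}(\psi)$ holds iff there is a sequence of unitaries $(u_{n})_{n=1}^{\infty}$ in $B^{\sim}$ with $\varphi=(\mathrm{Ad}\,{u_{n}})\psi$ on $A_{n}$, iff $\varphi$ is the pointwise limit of $((\mathrm{Ad}\,{u_{n}})\psi)_{n=1}^{\infty}$. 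The equivalence $(1)\Leftrightarrow(2)$ is the final assertion of Theorem~\ref{throutbar}, namely $\mathcal{B}(\varphi)=\mathcal{B}(\psi)$ iff $\overline{\mathcal{F}}(\varphi)=\overline{\mathcal{F}}(\psi)$. Finally, $(2)\Leftrightarrow(3)$ is the final assertion of Theorem~\ref{thrkbar}, namely $\mathrm{K}_{0}(\varphi)=\mathrm{K}_{0}(\psi)$ iff $\overline{\mathcal{F}}(\varphi)=\overline{\mathcal{F}}(\psi)$. Together these four links close the cycle through all five statements.

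Since this theorem is purely a synthesis, I do not expect a genuine obstacle: all the substantive analytic work---the two-sided and one-sided intertwining arguments, the approximate-unitary estimates of Lemma~\ref{lempart} and Lemma~\ref{lemad}, and the $\mathrm{K}$-theoretic matching of projections in Lemma~\ref{lemeqk}---has already been carried out in proving the cited results. The only points requiring care are getting the direction of each ``if and only if'' correct and keeping the choice of unitization consistent ($B^{\sim}$ versus $B^{+}$), which is handled by the remark following Lemma~\ref{lemeqbh}.
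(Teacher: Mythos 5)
Your proposal is correct and takes essentially the same route as the paper, which likewise settles the object-level equivalences by the bare fact that $\mathcal{B}$, $\overline{\mathcal{F}}$, and $\mathrm{K}_{0}$ are classification functors, and obtains the morphism-level cycle from Lemma~\ref{lemeqbh}, Theorem~\ref{throutbar}, and Lemma~\ref{lemeqk}. Your only (immaterial) deviation is citing the final assertion of Theorem~\ref{thrkbar} for $(2)\Leftrightarrow(3)$ where the paper cites Lemma~\ref{lemeqk} directly; since that assertion of Theorem~\ref{thrkbar} is itself proved by Lemma~\ref{lemeqk}, the two citations carry the same content.
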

\begin{proof}
The first part follows from the fact that we are considering  classification functors.
The second part follows from
Lemma~\ref{lemeqbh}, Theorem~\ref{throutbar}, and
Lemma~\ref{lemeqk}.
\end{proof}

\end{document}